\documentclass[intlimits]{amsart}

\usepackage[all,hyperref]{bi-discrete}
\usepackage[backend=biber,maxbibnames=5,maxalphanames=5,style=alphabetic,bibencoding=utf8,giveninits,url=false,isbn=false,sorting=nyt]{biblatex}
\usepackage{esint}
\usepackage{todonotes}  

\usepackage[
    centering      % This ensures margins are equal on left and right
]{geometry}

\providecommand{\eps}{{\mathcal E}}

\AtBeginBibliography{\small}

\usepackage{tikz}
\usepackage{pgfplots}

\allowdisplaybreaks
\pgfplotsset{width=10cm,compat=1.9}

\begin{document} 

\title[Maximal regularity for the symmetric gradient p-Laplace system]{Maximal Sobolev regularity of the stress tensor for the symmetric gradient p-Laplace system}%

\author{Linus Behn, Andrea Cianchi, Lars Diening, and Fa Peng}

\address{Linus Behn \\
	Fakult\"{a}t  f\"{u}r Mathematik,
	University of Bielefeld\\
	Universit\"{a}tsstrasse 25, 33615 Bielefeld, Germany}
\email{linus.behn@math.uni-bielefeld.de}

\address{Andrea Cianchi \\
	Dipartimento di Matematica e Informatica \lq\lq U.Dini"\\
    Universit\`{a} di Firenze,
	Viale Morgagni 67/a, 50134 Firenze, Italy}
\email{andrea.cianchi@unifi.it}

\address{Lars Diening \\
	Fakult\"{a}t  f\"{u}r Mathematik,
	University of Bielefeld\\
	Universit\"{a}tsstrasse 25, 33615 Bielefeld, Germany}
\email{lars.diening@uni-bielefeld.de}

\address{Fa Peng\\
	School of Mathematical Sciences,
	Beihang University\\
	Changping District Shahe Higher Education Park South Third Street No. 9, Beijing 102206, P.R. China}
\email{fapeng@buaa.edu.cn}

\subjclass{ 
% 35J57, %Boundary value problems for second-order elliptic systems 
            35J47, %Second-order elliptic systems
            35B65, %Smoothness and regularity of solutions to PDEs
            74C05, % 	Small-strain, rate-independent theories of plasticity (including rigid-plastic and elasto-plastic materials)
            35J60 %Nonlinear elliptic equations    
            % 35J25 %Boundary value problems for second-order elliptic equations
            }%
\keywords{symmetric gradient, p-Laplace system, plasticity, Young function, quasilinear elliptic systems, second-order derivatives, maximal regularity}%

%\date{}%
%\dedicatory{}%
%\commby{}%

\begin{abstract}
The symmetric $p$-Laplace operator enters various   models in mathematical physics, such as incompressible materials with power-type hardening and non-Newtonian fluids. In this work,
second-order differentiability properties of solutions to the symmetric $p$-Laplace system are established. They are formulated as maximal Sobolev regularity of the nonlinear stress tensor for locally square integrable right-hand sides. 
\end{abstract}

\maketitle

%\tableofcontents

\section{Introduction and main results}\label{S:intro}

The symmetric gradient $p$-Laplace
system has the form
\begin{align}\label{eq:system}
    -\divergence (\mathcal A_p(\eps u))=f \quad \text{in $\Omega$,}
    \end{align}
 where $\Omega$ is an open set in  $\RRn$, with $n\geq 2$. Here, $u\colon\RRn \to \RRn$ is 
  the unknown and $\eps u\colon\RRn \to \mathbb R^{n\times n}$ denotes its \emph{symmetric gradient}, given by
\begin{align*}
    \eps u = \tfrac 12 (\nabla u + (\nabla u )^T),
\end{align*}
where the superscript $\lq\lq T"$ stands for transpose.
Moreover,  $1<p<\infty$,
the function $ \mathcal A_p : \mathbb R^{n\times n} \to \mathbb R^{n\times n}$ is defined  as
\begin{align}
     \label{Ap}
     \mathcal A_p(\xi)= |\xi|^{p-2}\xi \quad \text{for $\xi \in \mathbb R^{n\times n}$,}
\end{align}
and $f: \Omega \to \RRn$. The quantity 
$\mathcal A _p (\eps u)$ is usually referred to as the \emph{stress tensor} in the literature, and its maximal Sobolev regularity is the subject of this work.

Nonlinear systems of partial differential equations depending on the symmetric gradient of solutions, of which \eqref{eq:system}
is a prototypical instance, come into play in various mathematical models for physical phenomena, such as plasticity theory and  non-Newtonian fluid dynamics. In particular, the system \eqref{eq:system} models incompressible materials with power-type hardening,  where $u$ is the displacement field, $\eps u$ the infinitesimal strain tensor, and $\mathcal A _p (\eps u)$  the deviatoric stress tensor -- see, for instance, \cite{Seregin1992,SereginShilkin1997,FuchsSeregin2000}. In the context of non-Newtonian fluids, \eqref{eq:system} is closely related to the  $p$-Stokes system, which contains an additional unknown  pressure term and a divergence vanishing condition.  In the latter model, which is discussed e.g. in  \cite{MalekNecasRuzicka1993, Ladyzenskaja1967}, the quantity $\mathcal{A}_p (\eps u)$ is called the viscous stress tensor.

The system \eqref{eq:system} is a counterpart of the classical $p$-Laplace system, where the  symmetric gradient is replaced by the full gradient:
\begin{align}\label{classical-system}
    -\divergence (\mathcal A_p(\nabla u))=f \quad \text{in $\Omega$.}
\end{align}
Here,  $u: \Omega \to \mathbb R^N$, with $N\geq 1$. 

The regularity theory for \eqref{classical-system} and its  generalizations has been developed over more than seventy years, and nowadays it constitutes  the corpus of a vast literature. By contrast, much less is known about our system \eqref{eq:system} and similar  systems depending on the symmetric gradient.
For instance, the $C^{1,\alpha}$-regularity of solutions to the system \eqref{classical-system} was established in the 1970s, starting with the work of K.Uhlenbeck \cite{Uhlenbeck1977}, the case of  scalar solutions ($N=1$) having already been settled by N.N.Ural'ceva \cite{Uraltseva1968} in the preceding decade.
On the other hand, 
 it is a classical open problem whether the  solutions to \eqref{eq:system} are of class $C^{1,\alpha}$.  Such regularity has only been proved  in the case when $n=2$, see  \cite{KaplickyMalekSara1999,DieningKaplickySchwarzacher2014} where  the $p$-Stokes system is also considered. 
A major technical obstacle in this connection is that, unlike  $\abs{\nabla u}$ in the case of solutions to \eqref{classical-system}, the function $\abs{\eps u}$ is not a subsolution of a scalar elliptic equation. This prevents the application of a De Giorgi type iteration method to $\abs{\eps u}$. The recent developments of this method introduced in \cite{BehnDieningNowakScharle2025}  to directly handle the vectorial solution $u$ to systems involving the full gradient $\nabla u$ also do not seem adaptable to the case of symmetric gradient systems.

The focus of the present contribution is on local second-order regularity properties of solutions to \eqref{eq:system} in the spirit of the classical  linear theory for data $f\in L^2_{\rm loc}(\Omega)$. In the linear case, corresponding to $p=2$, the natural regularity is $u\in W^{2,2}_{\loc}(\Omega)$. When $p \neq 2$, the maximal transfer of regularity from $f$ to $u$ is a much more delicate issue.

For the standard $p$-Laplace system \eqref{classical-system}, this question has been addressed
 in the recent papers \cite{CianchiMazya18,CianchiMazya19,BalciDieningCianchiMazya22}, where it is shown that this kind of regularity is properly described in terms of the membership of the stress tensor $\mathcal A_p(\nabla u)$ in the Sobolev space $W^{1,2}_{\rm loc}(\Omega)$. 
The scalar case of \eqref{classical-system}, corresponding to $N=1$,   is the subject of \cite{CianchiMazya18}, where  it is shown that, for every $p>1$,
\begin{align}
    \label{maximal-classical}
    \mathcal A_p(\nabla u) \in W^{1,2}_{\rm loc}(\Omega) \quad \text{if and only if} \quad f \in L^2_{\rm loc}(\Omega).
\end{align}
A two-sided local estimate associated with \eqref{maximal-classical} is also offered.
In the vectorial case, i.e. when $N\geq 2$, an analogous result holds for $p>2(2-\sqrt 2) \approx 1.1715$. This is established in the paper \cite{BalciDieningCianchiMazya22}, which augments the previous contribution \cite{CianchiMazya19}, where the same conclusion was shown to hold in the more restricted range of values of $p>\tfrac 32=1.5$. Whether or not the property \eqref{maximal-classical} continues to hold for every $p>1$ also for systems  is still an open problem.  
 Various papers on this and related topics appeared in the wake of these results -- see e.g. \cite{ACCFM, ACF, ACP, BMV, GuMo, HaSa, KaSa, MiPeZh, MoMuSci, Sar}. However, most of them deal with scalar problems.

The Sobolev regularity of $\mathcal A_p(\eps u)$ for solutions to the symmetric gradient system \eqref{eq:system}
is a yet more  challenging problem due to the stronger coupling of the nonlinear system. Moreover, the result cannot hold for every $p>1$. Indeed, even 
in the special case when $f=0$, the stress tensor $\mathcal A_p(\eps u)$ need not belong to $W^{1,2}_{\rm loc}(\Omega)$ for small $p$, as the  example below shows. This raises the question of identifying the  range of values of $p$ for which the problem has a positive answer.

\begin{example}\label{ex1}
    The function $u:\RRn \to \RRn$, defined as
\begin{align*}
	 \qquad u(x)=(2x_1x_2,-x_1^2,0,\dots,0)^T \quad \text{for $x\in \RRn$,}
\end{align*}
satisfies 
\begin{equation*}
	\mathcal A_p(\eps u(x))=
	\begin{pmatrix}
		(2|x_2|)^{p-1}{\rm sign} (x_2)&0 & \dots & 0\\
		0&0 & \dots & 0
        \\ \vdots & \vdots & \ddots & \vdots
        \\ 0 & 0& \dots & 0
	\end{pmatrix}.
\end{equation*}
Hence, it solves the system 
$$-\divergence (\mathcal A_p(\eps u))=0.$$
On the other hand, because of the singularity of $u$ on the hyperplane $\lbrace x_2=0\rbrace$, one has that $$\text{$\mathcal A_p(\eps u) \in W^{1,2}_{\rm loc}(\RRn)$ \,\, if and only if \,\,$p>\tfrac32$.}$$
\end{example}

Our main result, contained in Theorem \ref{thm:local} below, exhibits a  substantial spectrum of values of $p >\frac 32$, that just excludes a  tiny range in low dimension, for which 
\begin{align}
    \label{maximal-symm}
    \mathcal A_p(\eps u) \in W^{1,2}_{\rm loc}(\Omega) \quad \text{if and only if} \quad f \in L^2_{\rm loc}(\Omega).
\end{align}
Moreover, it implies the two-sided estimate
\begin{align}\label{est-symm}
   c_1 \norm{\mathcal A_p(\eps u)}_{W^{1,2}( \Omega'')} \leq \norm{f}_{L^2(\Omega')}+\|\mathcal A_p(\eps u)\|_{L^{1}(\Omega')}\leq  c_2\norm{\mathcal A_p(\eps u)}_{W^{1,2}(\Omega')}
\end{align}
for every open sets $ \Omega '' \subset\subset \Omega' \subset\subset \Omega$ and some positive constants $c_1$ and $c_2$. Plainly, only the first inequality in \eqref{est-symm}
is nontrivial.
 Statements of this kind are often referred to as \emph{maximal regularity results}, in that they yield an isomorphism between the  classes of the datum $f$ and  of a quantity depending on the solution under consideration, which in the current situation  is the stress tensor~$\mathcal{A}_p (\eps u)$. 

The admissible values of $p$ in Theorem \ref{thm:local} for \eqref{maximal-symm} and \eqref{est-symm} to hold  line in the interval $(p^-(n), p^+(n))$, where
\begin{align}
    \label{ASSUMPTIONS}
      {p^-(n)} = \begin{cases}
  2- \frac{5}{2(4+\sqrt{6})} &\quad \text{if $n=2$}
   \\ 2- \frac{1}{\sqrt{n+1}+1}  &\quad \text{if $n\geq 3$}
\end{cases}
\qquad \text{and} \qquad 
{p^+(n)} = \begin{cases}
     \infty &\quad \text{if $n\leq 7$}
   \\ 2+ \frac{1}{\sqrt{n+1}-1} &\quad \text{if $n\geq 8$.}  
   \end{cases}
\end{align}

For dimensions $1,\dots ,8$ these values   are collected in Table \ref{table}.
In light of Example \ref{ex1}, it shows that, for $2\leq n\leq 7$,  the question of the validity of \eqref{maximal-symm}
remains open only for $p$ in a  small interval above $\tfrac 32$.

\begin{table}[htbp]
\centering
\renewcommand{\arraystretch}{1.5}
\begin{tabular}{|c|c|c|c|c|c|c|c|}
\hline
$n$ & $2$ & $3$ & $4$ & $5$ & $6$ & $7$ & $8$  \\
\hline&&&&&&&\\[-3.9ex]\hline
$p^+(n)$ & $\infty$ & $\infty$ & $\infty$ & $\infty$ & $\infty$ & $\infty$ & $2.5$ \\
\hline
$p^-(n)$ & $\approx 1.612$ & $\approx 1.667$ & $\approx 1.691$ & $\approx 1.710$ & $\approx 1.726$ & $\approx 1.739$ & $1.75$ \\
\hline
\end{tabular}%
\medskip
\caption{Values of $p^-(n)$ and $p^+(n)$.
}
\label{table}
\end{table}

Theorem \ref{thm:local} applies to local weak solutions to the system \eqref{eq:system} whenever $f\in (W^{1,p}_0(\Omega'))^*$ for every open set $\Omega'\subset \subset \Omega$.
Since we are merely assuming that  $f \in L^2_{\loc}(\Omega)$,   this is the case only if $p\geq\frac{2n}{n+2}$. In high dimensions ($n\geq 21$), the lower bound on $p$ required in our result falls below this threshold. In this regime,  solutions have to be interpreted according to a more 
general, weaker notion involving a limiting process via approximations of $f$. They will be called approximable solutions. Their definition is inspired by the methods of \cite{DolzHungerMueller97} and is introduced in Section \ref{S:functional}. Here, we content ourselves by mentioning   
that an approximable solution   is only approximately differentiable in the geometric measure sense. For simplicity of notation, the symmetric part of the approximate gradient of a function $u$ will still be denoted by $\eps  u$ throughout. 

The definition of approximable solution  actually extends that of weak solution. Indeed, as shown in Section \ref{S:solutions},  
 any weak solution is also an approximable solution. For this reason,  our result is just stated for solutions of the latter kind. In the statement, and in what follows, $B_R$ denotes a ball in $\RRn$ of radius $R$.

\begin{theorem}\label{thm:local}
Assume that $p^-(n) <p< p^+(n)$.
Let $f \in L^2_{\loc}(\Omega)$ and let $u$ be a local approximable solution to the system \eqref{eq:system}.
Then,
\begin{align}
    \label{main0}
    \mathcal A_p(\eps u)
\in W^{1,2}_{\rm loc}(\Omega),
\end{align}
 and there exists a constant $c=c(p,n)$ such that
\begin{align}\label{main1}
 R^{-1}\|\mathcal A_p(\eps u)\|_{L^2(B_R)}  + \,\|\nabla (\mathcal A_p(\eps u))\|_{L^2(B_R)}
   \leq c \big(\,\|{f}\|_{L^2(B_{2R})} + R^{-\frac n2-1}\|\mathcal A_p(\eps u)\|_{L^{1}(B_{2R})}\big)
\end{align} 
 for every ball $B_R$ such that $B_{2R}\subset\subset \Omega$.
\end{theorem}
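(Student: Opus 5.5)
The plan is to follow the strategy developed for the full-gradient system in \cite{CianchiMazya19,BalciDieningCianchiMazya22}: first prove \eqref{main1} for smooth solutions of a regularized problem, with a constant independent of the regularization, and then pass to the limit using the definition of approximable solution from Section~\ref{S:functional}.

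\textbf{Regularization.} We replace $\mathcal A_p$ by a nondegenerate, smooth operator $\mathcal A_{p,\varepsilon}(\xi)=(\varepsilon+|\xi|^2)^{\frac{p-2}{2}}\xi$. We replace $f$ by mollified data $f_k\to f$ in $L^2$, and we solve Dirichlet problems on $B_{2R}$ whose boundary values approximate $u$. Standard difference-quotient arguments, which work because $\eps$ and $\partial_j$ commute, give $u_k\in W^{2,2}_{\rm loc}$. This justifies differentiating the system.

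\textbf{Key identity.} The core is an integral identity of Reilly/Bernstein type, applied to the vector field $V=\mathcal A_{p,\varepsilon}(\eps u)$. We test the differentiated system $-\divergence(\partial_j V)=\partial_j f$ with $\partial_j u\,\eta^2$, or, more conveniently, we integrate by parts twice on $\int |\divergence V|^2\eta^2=\int |f|^2\eta^2$. This produces
\[
\int |\nabla V|^2\eta^2 = \int |f|^2\eta^2 + \int \big(|\nabla V|^2-|\divergence V|^2\big)\eta^2 .
\]
The quadratic form $|\nabla V|^2-|\divergence V|^2$ equals a null-Lagrangian part (a sum of $2\times2$ minors, which integrates to cutoff error terms) plus a remainder. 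The crucial structural point is that $V$ is a symmetric matrix field of the form $\mathcal A(\eps u)$. Hence $\partial_k V$ is expressed through $D\mathcal A_p(\eps u)\,\partial_k\eps u$, and $\partial_k\eps u$ obeys the Saint-Venant compatibility conditions, i.e.\ $\operatorname{curl}\operatorname{curl}\eps u=0$. Using these compatibility relations, we rewrite $\int|\nabla V|^2\eta^2$ against $\int|\divergence V|^2\eta^2$. The task then reduces to a pointwise algebraic inequality. With $B=D\mathcal A_p(\xi)^{1/2}$ and $H=\nabla^2 u$ (a symmetric third-order tensor), we need
\[
|\nabla V|^2\le C\,|\divergence V|^2 + \text{(null Lagrangian terms)},
\]
or at least an inequality of this type after adding a suitable multiple of a compatibility form. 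Quantitatively, the range $p^-(n)<p<p^+(n)$ should be exactly the set where the associated quadratic form in $H$ is coercive modulo the kernel. Here the eigenvalues of $D\mathcal A_p$ are $|\xi|^{p-2}$ and $(p-1)|\xi|^{p-2}$, with ratio $p-1$. The square roots in \eqref{ASSUMPTIONS} are consistent with optimizing a Cauchy--Schwarz splitting in a parameter of this kind. In the two-dimensional case the extra algebraic structure should give the improved constant.

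\textbf{Main obstacle.} This pointwise linear-algebra inequality is where I expect the difficulty to lie. We have to determine the sharp coercivity of a quadratic form on symmetric third-order tensors constrained by compatibility, in the presence of the anisotropy of $D\mathcal A_p$. My first attempt would be to decompose $\partial_k\eps u$ into components parallel and orthogonal to $\xi$ and to apply weighted Young inequalities with free parameters. I would then optimize those parameters, which should yield the threshold $\sqrt{n+1}$.

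\textbf{Closing the estimate.} The cutoff terms are of the form $\int |V||\nabla V|\,\eta|\nabla\eta|$. They are absorbed by Young's inequality into $\int|\nabla V|^2\eta^2$, leaving $R^{-2}\int_{B_{2R}}|V|^2$. To reduce $L^2$ to $L^1$ on the right-hand side we use the Sobolev--Poincar\'e inequality on balls together with a standard covering and iteration lemma. This gives \eqref{main1} with a constant uniform in $\varepsilon$ and $k$.

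\textbf{Passage to the limit.} Uniform bounds give $V_k$ bounded in $W^{1,2}(B_R)$. By compactness, $V_k\to V$ strongly in $L^2$ and a.e.; after inverting $\mathcal A_{p,\varepsilon}$, this yields $\eps u_k\to\eps u$ a.e., using the a.e.\ convergence built into the approximable-solution definition. Weak lower semicontinuity then gives \eqref{main0} and \eqref{main1} for $u$.
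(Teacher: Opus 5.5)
\textbf{The central step is missing.} Your overall architecture matches the paper's: regularize, prove a uniform local estimate from a pointwise inequality modulo divergence terms, absorb the cutoff terms, pass from $L^2$ to $L^1$ by Poincar\'e and hole-filling, then take limits. However, the step that carries all the content is left as a conjecture. You never write down the pointwise inequality; you only expect that optimizing some splitting ``should'' produce $\sqrt{n+1}$.

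The route you indicate also does not close as stated. Integrating $\int|\divergence V|^2\eta^2$ by parts gives $\int|\nabla V|^2\eta^2=\int|\divergence V|^2\eta^2+\frac12\int|\mathrm{curl}\,V|^2\eta^2$ plus cutoff terms, with the row-wise curl. For $a(|\nabla u|)\nabla u$ this curl involves only $\nabla a$, hence only $\theta_a$. In the symmetric setting it does not: already for $p=2$,
\[
\partial_k\eps_{ij}u-\partial_j\eps_{ik}u=\tfrac12\partial_i(\partial_ku_j-\partial_ju_k),
\]
a genuine second-order quantity coming from the skew part of $\nabla u$. So the full-gradient mechanism does not transfer, and your ``suitable multiple of a compatibility form'' is exactly the missing ingredient.

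The paper supplies it with the multiplier $a(|\eps u|)\Delta u$, built on the \emph{full} Laplacian. By Lemma \ref{lem:DeltaSymDelta},
\[
\Delta^{\sym}u\cdot\Delta u=|\nabla\eps u|^2+\divergence\big((\Delta u)^T\eps u-\tfrac12\nabla|\eps u|^2\big).
\]
Its weighted versions (Lemmas \ref{lem:pw_identity_lowdim} and \ref{lem:pw_identity_alldim}) are combined with the Young inequality $2a\,\divergence(a\eps u)\cdot\Delta u\le\delta a^2|\Delta u|^2+\delta^{-1}|\divergence(a\eps u)|^2$. This reduces everything to showing that $|\nabla\eps u|^2+\frac12|\Delta u|^2+2\theta|\nabla|\eps u||^2+\frac12\nabla\divergence u\cdot\Delta u$ dominates $c|\nabla\eps u|^2$. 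The alternative is the variant with an extra $\frac12|\nabla\divergence u|^2$ and a cross term carrying $\divergence u/|\eps u|$.

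The thresholds then come out explicitly, using $|\nabla|\eps u||\le|\nabla\eps u|$ when $\theta<0$:
\begin{itemize}
\item $|\nabla\divergence u|^2\le n|\nabla\eps u|^2$ gives $1-\frac n8+2\theta>0$;
\item $|\divergence u|^2\le n|\eps u|^2$ gives $1+2\theta-n\theta^2>0$;
\item for $n=2$, an explicit positive semidefinite $3\times3$ form improves the lower bound.
\end{itemize}
These are not claimed to be the sharp range, contrary to your expectation.

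\textbf{Secondary issues.} Your regularization $(\varepsilon+|\xi|^2)^{\frac{p-2}{2}}\xi$ keeps $p$-growth, and difference quotients only give $(\varepsilon+|\eps u|^2)^{\frac{p-2}{4}}\nabla\eps u\in L^2_{\rm loc}$. For $p<2$ this does not give $u\in W^{2,2}_{\rm loc}$. For $p>2$ it does not give $\nabla V\in L^2_{\rm loc}$ unless $\eps u\in L^\infty_{\rm loc}$, which is not available in general for symmetric gradient systems. Without a priori finiteness of $\int|\nabla V|^2\eta^2$, the absorption and hole-filling step is not justified. The passage from smooth functions to solutions (Lemma \ref{lem:local_estimate_quadratic}) also uses that $a$ is bounded.

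The paper avoids this with $a_{p,\varepsilon}$ from \eqref{aeps}. It satisfies $\varepsilon\le a_{p,\varepsilon}\le\varepsilon^{-1}$, so the regularized solutions lie in $W^{2,2}_{\rm loc}$ and $|\nabla\mathcal A_{p,\varepsilon}(\eps u)|\lesssim|\nabla^2u|$. At the same time its indices stay in $[m_p-2,M_p-2]$, which keeps the constants uniform in $\varepsilon$.

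Your limit passage is also too compressed: the regularized Dirichlet solutions are not the approximating sequence of the definition, so their limit must be identified. The paper proceeds in layers:
\begin{enumerate}
\item For smooth $f$ and a weak solution $u$, it lets $\varepsilon\to0$, using compactness of $\eps u_{\varepsilon,m}$ from Lemma \ref{lem:Atou} and inversion of $\mathcal A_{p,\varepsilon}$. This yields the Dirichlet solution $w_m$ of the $p$-system with datum $u_m$.
\item It lets $m\to\infty$ and recovers $u$ by monotonicity.
\item Only then does it use \eqref{approxuloc}--\eqref{approxaloc} to handle approximable solutions.
\end{enumerate}
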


As far as we are aware,   Theorem \ref{thm:local} is the first contribution on the regularity of solutions to the system \eqref{eq:system} expressed in terms of the stress tensor $\mathcal A_p(\eps u)$. As already pointed out  in connection with Equations \eqref{maximal-symm} and \eqref{est-symm}, this yields maximal regularity properties in the sense specified above. Prior results  about the regularity of the more customary quantity $\mathcal V_p(\eps u)= |\eps u|^{\frac{p-2}2}\eps u$,  called the energetic stress tensor, are available in the literature.  They can be found, e.g., in \cite{SereginShilkin1997,BerselliRuzicka2020,BehnDiening24,GiPa,CianchiGianettiPassarelliScheven2026, BeKaRu, BerselliRuzicka2020, Seregin1992, SeSh,Gmeineder2020}, and,  for solutions to the $p$-Stokes system, in  \cite{Breit2012, BreitFuchs, DieningKaplicky13, FuchsSeregin2000, GiPaSch1, GiPaSch2, Naumann1988}. However, as shown by these  results, the use of $\mathcal V_p(\eps u)$ does not allow for sharp conclusions reflecting the regularity of the datum $f$.

 This perspective mirrors a paradigmatic parallel shift  from $\mathcal{V}_p(\nabla u)$ to $\mathcal{A}_p(\nabla u)$ in the description of the regularity of solutions to the $p$-Laplace equation  and system over the last two decades. For instance, besides the series of papers \cite{CianchiMazya18,CianchiMazya19,BalciDieningCianchiMazya22} on second-order regularity mentioned above, the expression  $\mathcal{A}_p (\nabla u)$ is employed   in  \cite{DieningKaplickySchwarzacher2012} in the context of $\setBMO$ regularity, and in \cite{BCDKS, BreitCianchiDieningSchwarzacherARMA} for  regularity in more general Campanato type spaces. These contributions deal with data on the right-hand side in divergence form.
  The norm of the same expression is   pointwise bounded via Riesz potentials of $f$ in \cite{KuusiMingione2013,KuusiMingione2018}. For solutions to boundary value problems, an estimate for $\mathcal{A}_p (\nabla u)$ in rearrangement form in presented in \cite{CianchiMazyaJEMS}. Fractional regularity of $\mathcal{A}_p (\nabla u)$ is the subject of \cite{AvelinKuusiMingione} and \cite{BalciDieningWeimar}.

Our approach to Theorem \ref{thm:local} is based on pointwise differential identities and ensuing inequalities for derivatives of the stress tensor for a class of systems, governed by an Orlicz type nonlinearity, which includes \eqref{eq:system} as a special instance.
 These inequalities constitute the core of the present paper. Let us emphasize that two different kinds of identities and inequalities are needed to derive the best possible results in low and high dimension $n$. The relevant identities require a degree of smoothness of functions which is not guaranteed for solutions to \eqref{eq:system}. This calls for approximations at various levels, including a regularization of the differential operator in \eqref{eq:system} via nonlinear operators with Orlicz quadratic growth, and explains the necessity of a detour into the Orlicz realm. Handling solutions to the regularized systems is still not straightforward, because of their limited regularity properties due to the dependence of the differential operator on the symmetric gradient.

 The conclusions of Theorem \ref{thm:local} continue to hold for a slightly broader class of systems of the form
\begin{align}\label{eq:systembis}
    -\divergence (\mathcal A_p^\nu(\eps u))=f \quad \text{in $\Omega$,}
    \end{align}
    where $\nu\geq 0$ and the function $A_p^\nu$ is given, according to the Carreau law \cite{Carreau1972}, by
    \begin{align}\label{Apnu}
        A_p^\nu (\xi)= (\nu+|\xi|^2)^{\frac{p-2}{2}}\xi \quad \text{for $\xi \in \mathbb R^{n\times n}$.}
    \end{align}
    Of course, the function $\mathcal A_p$ has to be replaced with $A_p^\nu$ in \eqref{main0} and \eqref{main1}. The same proof of Theorem \ref{thm:local} applies, with minor modifications. Actually, if $\nu >0$, the differential operator in \eqref{eq:systembis} is more regular than that in \eqref{eq:system}, since it is neither degenerate nor singular where $\eps u=0$.
    \\ Symmetric gradient systems with an even more general, non-necessarily polynomial growth can also be dealt with via the methods employed in this paper. The Orlicz-Sobolev spaces provide the appropriate functional framework for their solutions. An extension of Theorem \ref{thm:local} that includes these systems
requires assuming suitable lower and upper bounds on the Simonenko indices, as defined in Section \ref{sec:young}, of the function which replaces $t^{p-2}$. In fact, as explained above, 
     our pointwise differential inequalities are already proved in this  setting.
     In order to avoid unnecessary technical complications, we limit our discussion to the system \eqref{eq:system}. The  variants needed to treat these generalized versions are briefly described in Remarks \ref{rem:Orlicz1} and  \ref{rem:orlicz2}.

The remaining part of this paper is organized as follows. In Section \ref{sec:young} we collect some preliminary material concerning Young functions associated with the differential operator of the system \eqref{eq:system} and with its regularizations. The key pointwise estimates involving the symmetric gradient are the subject of Section \ref{sec:pointwise}. 
Section \ref{S:functional} is devoted to the function space framework and Section \ref{S:solutions} to the notions of  solutions entering our analysis.  The Sobolev regularity of the stress tensor for nonlinear systems with quadratic growth, which plays a role in the approximation process, is established in Section \ref{sec:quadratic}. Finally, the proof of Theorem \ref{thm:local} is accomplished 
 in Section \ref{S:proof-main}.

\section{\texorpdfstring{Young functions associated with the symmetric gradient $p$-Laplace system}{Young functions associated with the symmetric gradient p-Laplace system}}\label{sec:young}

This section is devoted to definitions and basic properties of certain functions which play a role in our analysis of the system \eqref{eq:system}. In what follows, the relation $\lq\lq \lesssim "$ between two expressions means that they are bounded by each other, up to positive multiplicative constants depending on quantities to be specified. The relations $\lq\lq \gtrsim"$ and $\lq\lq \eqsim "$ are defined accordingly.

Given $p>1$, denote by $a_p: (0, \infty) \to (0, \infty)$ the function defined as
$$a_p(t)=t^{p-2} \quad \text{for $t>0$.}$$ 
Thus, the function $\mathcal A_p$ given by \eqref{Ap} takes the form 
$$\mathcal A_p(\xi)= a_p(|\xi|)\xi \quad \text{for} \,\,\xi \in  \RRn,$$ and the system \eqref{eq:system} can be rewritten as
\begin{align}
  \label{eq:eff-viscosity}
  -\divergence\big( a_p(\abs{\eps u}) \eps u\big) &= f.
\end{align}
In the context of power-law plasticity, the factor $a_p(\abs{\eps u})$ is called the \emph{effective modulus}, whereas in the context of Non-Newtonian fluids it is called the \emph{effective viscosity}.
\\ Notice that, on setting 
$$b_p(t)=a_p(t)t \quad \text{and} \quad B_p(t) = \int_0^t  b_p(\tau) \,d\tau \quad \text{for $t>0$,}$$
one has that $b_p(t)=t^{p-1}$ and $B_p(t)=\tfrac 1p t^p$, and hence
\eqref{eq:system} is the Euler-Lagrange system of the energy functional
\begin{align}
\label{eq:energy}
    \mathcal{J}_p(u) = \int_\Omega B_p(\abs{\eps u})\,dx - \int_\Omega u \cdot f \,dx.
\end{align}
 Regularizations of the functions $a_p$, $b_p$ and $B_p$ are needed in our approximation process of the solutions to \eqref{eq:system}. This requires enlarging the class of functions under consideration from powers to the more general family of 
Young functions. 

Given a function $a: (0, \infty) \to  (0, \infty)$ such that $a\in C^1(0,\infty)$, we define $\theta_a: (0,\infty) \to \setR$ as
    \begin{align}
        \label{theta}
        \theta_a(t)=\frac{a'(t)t}{a(t)} \qquad \text{for $t>0$}.
    \end{align}
The lower and upper Simonenko indices of $a$ are then given by
 \begin{align}\label{ia}
i_a= \inf _{t >0} \theta_a (t) \qquad \hbox{and} \qquad
s_a= \sup _{t >0} \theta_a (t).
\end{align}
 Thus,  $i_a\leq \theta_a (t)\leq s_a$ for $t>0$. In particular, 
$\theta_{a_p}(t)=i_a=s_a=p-2$ for $t>0$.
\\
In what follows, we shall always assume that
\begin{align}\label{infsup}
-1 < i_a \leq s_a < \infty.
\end{align}
Thanks to the first inequality in \eqref{infsup}, the function $b : [0,
\infty ) \to [0, \infty )$,  defined as
\begin{align}
    \label{b} 
    b(t)= \begin{cases}
        ta(t)  &\quad  \hbox{if $t > 0$}
        \\ 
        0  &\quad  \hbox{if $t = 0$,}
    \end{cases}
\end{align}
 is continuous in $[0,\infty)$ and strictly increasing. Hence, the
function $B: [0, \infty ) \to [0, \infty )$,  given by
\begin{align}
    \label{B}
    B(t) = \int _0^t b (\tau ) \, d\tau \qquad \text{for $t \geq 0$,}
\end{align}
is a finite-valued Young function. Namely, $B$ is convex, continuous, and $B(0)=0$.  Note that $B$ is a uniformly convex Young function in the sense of~\cite[Appendix~B]{DieningFornasierWank20} with lower and upper index of uniform convexity 
$i_a+2$ and $s_a+2$, respectively. By~\eqref{infsup}, we have $1<i_a+2\le s_a+2 < \infty$.

The Young conjugate $\widetilde B$ of $B$ can be defined as the Young function obeying
\begin{align}
    \label{Btilde}
    \widetilde B(t) = \int _0^t b^{-1} (\tau ) \, d\tau \qquad \text{for $t \geq 0$.}
\end{align}
The following version of Young's inequality holds:
\begin{align}
    \label{youngineq}
    st \leq B(s) + \widetilde B(t) \quad \text{for $s,t \geq 0$.}
\end{align}
We also define the function $\mathcal A : \mathbb R^{n\times n} \to \mathbb R^{n\times n}$ as
\begin{align}
    \label{mathcalA}
\mathcal A (\xi) = \begin{cases}
    a(|\xi|)\xi & \qquad \text{if $\xi \neq 0$}
    \\ 0 &\qquad \text{if $\xi =0$.}
\end{cases}
\end{align}
One can verify  that
\begin{equation}\label{jan200}
\text{$i_b= i_a+1$\quad  and \quad $s_b=s_a+1$,}
\end{equation}
and
\begin{equation}\label{jan201}
\text{$i_B\geq i_b+1$ \quad and \quad  $s_B\leq s_b +1$.}
\end{equation}
Moreover, since the function $b$ is increasing,
\begin{align}
    \label{feb120}
    B(t) \leq b(t)t \quad \text{for $t \geq 0$.}
\end{align}
From the definition of $i_B$ and $s_B$ one can deduce that
\begin{equation}
    \label{mar0}
    B(1) \min\{t^{i_B}, t^{s_B}
    \}  \leq B(t) \leq B(1) \max\{t^{i_B}, t^{s_B}
    \}  \quad \text{for $t \geq 0$.}
\end{equation}
More generally, one has that
\begin{align}
    \label{mar00}
    B(t) \min\{\lambda ^{i_B}, \lambda ^{s_B}
    \}  \leq B(\lambda t) \leq B(t) \max\{\lambda ^{i_B}, \lambda^{s_B}
    \}  \quad \text{for $t, \lambda  \geq 0$.}
\end{align}
As a consequence, both  $B$ and $\widetilde B$ satisfy the $\Delta_2$-condition, 
 and  
\begin{align}
    \label{orliczbasic}
 c_1 B(t)  \leq  \widetilde{B}(b(t)) \leq c_2 B(t) \quad \text{for $t\geq 0$,}
\end{align}
for some positive
 constants $c_1$ and $c_2$ depending only on $i_B$ and $s_B$. 
\\
Under the additional assumption that
\begin{align}
        \label{dec1}
        b\in C^1([0, \infty)),
    \end{align}
 the function $a$ can be extended in a continuous way at $0$, by setting
 \begin{align}
     \label{a(0)}
     a(0)=b'(0),
 \end{align}since
\begin{align}
    \label{dec2}
\lim_{t\to 0^+}a(t) =\lim_{t\to 0^+}\frac{b(t)}t= b'(0).
\end{align}
Moreover, as 
\begin{align*}
    \lim_{t\to 0^+}\big( a(t)+a'(t)t\big)= \lim_{t\to 0^+}b'(t) = b'(0)=\lim_{t\to 0^+}a(t),
\end{align*}
one has that \begin{align}
    \label{dec3}
    \lim_{t\to 0^+}a'(t)t=0.
\end{align}
Accordingly, with \eqref{dec1} in force, we also extend at $t=0$ the function $\theta_a$, defined in \eqref{theta},  and set:
\begin{align}
    \label{theta0}
    \theta_a(0)=0.
\end{align}

Given $p>1$ and $\varepsilon \in (0,1)$, define the function $a_{p,\varepsilon} : [0, \infty) \to [0, \infty)$ as
\begin{equation}\label{aeps}
a_{p,\varepsilon} (t) = \frac{(\varepsilon + t^2)^{\frac{p-2}{2}} +
\varepsilon}{1 + \varepsilon (\varepsilon + t^2)^{\frac{p-2}{2}} } \quad
\quad \hbox{for $t \geq 0$.}
\end{equation}
Notice that, for $\varepsilon =0$, the formula \eqref{aeps} reproduces the function 
$a_p(t)=t^{p-2}$ for $t>0$. 
\\ One can verify that $a_{p,\varepsilon} $ shares its monotonicity property with $t^{p-2}$, and:
\begin{equation}\label{cinf}
a_{p,\varepsilon} \in C^1 ([0 , \infty ));
\end{equation}
\begin{align}\label{abound}
\varepsilon \leq    a_{p,\varepsilon} (t)   \leq \varepsilon ^{-1} \quad
\hbox{for $t \geq 0$.}
\end{align}
Also, given $t_0>0$, there exist  constants $c_1(p,t_0)$ and $c_2(p,t_0)$ such that
    \begin{align}
        \label{abound1}
      c_1(p,t_0)  \leq a_{p,\varepsilon}(t_0) \leq c_2(p,t_0) \quad \text{for $\varepsilon \in (0,1)$.}
    \end{align}
Set
\begin{align}\label{mpMp}
     m_p=\min\{p,2\}  \quad \text{and} \quad M_p = \max\{p,2\}. 
\end{align}
Then,
\begin{align}\label{indici}
m_p-2 \leq i_{a_{p,\varepsilon}} \leq s_{a_{p,\varepsilon}} \leq
M_p-2.
\end{align}
Moreover, let $b_{p,\varepsilon}$ and $B_{p,\varepsilon}$ be defined as in
\eqref{b} and \eqref{B}, respectively, with $a$ replaced with
$a_{p,\varepsilon}$.
 Owing to \eqref{jan200} with $a$ replaced with $a_{p,\varepsilon}$, 
\begin{align}
    \label{mar42}
    m_p-1 \leq i_{b_{p,\varepsilon}}\leq s_{b_{p,\varepsilon}} \leq M_p-1 \qquad \text{for $\varepsilon \in (0,1)$.}
\end{align}
Hence, 
\begin{align}
    \label{feb50}
    \text{the function $b_{p,\varepsilon}$ is increasing,}
\end{align}
and
\begin{align}
    \label{nov48}
(m_p-1)b_{p,\varepsilon} (t)  \leq b_{p,\varepsilon}'(t) t \leq (M_p-1)b_{p,\varepsilon} (t) \quad \text{for $t \geq 0$.}
\end{align}
Moreover, from \eqref{nov48}
 and \eqref{abound1} one infers that 
\begin{align}
    \label{feb60}
    c_1(p) \min\{t^{m_p-1}, t^{M_p-1}\} \leq b_{p,\varepsilon}(t) \leq c_2(p) \max\{t^{m_p-1}, t^{M_p-1}\} \quad \text{for $t\geq 0$.}
\end{align}
Thanks to \eqref{jan201}, with $a$ replaced with $a_{p,\varepsilon}$, and \eqref{indici}, 
    \begin{align}
        \label{feb33}
        m_p \leq i_{B_{p,\varepsilon}}\leq s_{B_{p,\varepsilon}}\leq M_p.
    \end{align}
Therefore,
\begin{align}
    \label{mar43}
    m_p\,B_{p,\varepsilon} (t)  \leq b_{p,\varepsilon}(t) t \leq M_p\,B_{p,\varepsilon} (t) \quad \text{for $t \geq 0$.}
\end{align}
 Notice that
\begin{align}\label{2026-1}
    \tfrac 14 a_{p,\varepsilon}(1/2)   \leq \int_{\frac 12}^1 a_{p,\varepsilon}(t)t dt \leq B_{p,\varepsilon} (1) \leq \tfrac 12 a_{p,\varepsilon}(1) \quad \text{for $t\geq 0$.}
\end{align}
Via \eqref{abound1}, \eqref{mar43} and \eqref{2026-1} one can deduce that
\begin{align}
    \label{feb35}
    c_1(p) \min\{t^{m_p}, t^{M_p}\} \leq B_{p,\varepsilon}(t) \leq c_2(p) \max\{t^{m_p}, t^{M_p}\} \quad \text{for $t\geq 0$,}
\end{align}
for suitable constants $c_1(p)$ and $c_2(p)$.
Owing to \eqref{feb35}, for every $\varepsilon \in (0,1)$,
\begin{align}
    \label{mar45}
    B_{p,\varepsilon}(t) \leq \begin{cases}
        c(t^2 + 1) &\quad \text{if $p < 2$}
        \\ c(t^p + 1) & \quad \text{if $p\geq 2$,}
    \end{cases}
\end{align}
for some constant $c=c(p)$ and
for $t\geq 0$, and 
\begin{align}
    \label{nov41}
    B_{p,\varepsilon}(t) \geq \begin{cases}
        c(t^p - 1) &\quad \text{if $p < 2$}
        \\ c(t^2 - 1) & \quad \text{if $p\geq 2$,}
    \end{cases}
\end{align}
for some constant $c=c(p)$ and
for $t\geq 0$. Moreover,
\begin{align}
    \label{mar45'}
    \widetilde {B_{p,\varepsilon}}(t) \leq \begin{cases}
        c(t^{p'} + 1) &\quad \text{if $p < 2$}
        \\ c(t^2 + 1) & \quad \text{if $p\geq 2$,}
    \end{cases}
\end{align}
for some constant $c=c(p)$ and
for $t\geq 0$. Given any $R>0$,
\begin{equation}\label{convb}
 \lim _{\varepsilon \to 0} b_{p,\varepsilon} = b_p \qquad
\hbox{uniformly in $[0, R]$,}
\end{equation}
and  
\begin{equation}\label{convB}
 \lim _{\varepsilon \to 0} B_{p,\varepsilon} = B_p \qquad \hbox{uniformly in $[0, R]$.}
\end{equation}
Furthermore,
define the function 
$\mathcal A_{p,\varepsilon} : \mathbb R^{n\times n} \to \mathbb R^{n\times n} $
as 
\begin{align}\label{Aeps}
\mathcal A_{p,\varepsilon} (\xi) = a_{p,\varepsilon} (|\xi|)\xi \qquad \text{for $\xi \in \mathbb R^{n\times n}$.}
\end{align}
Then,
\begin{align}\label{conva}
 \lim _{\varepsilon \to 0} \mathcal A_{p,\varepsilon} = \mathcal A_p \qquad
\hbox{uniformly in any ball $B_R$.}
\end{align}
One has that
\begin{align}
    \label{nov60'}
    |\mathcal A_{p,\varepsilon}(\xi) -  \mathcal A_{p,\varepsilon} (\eta)| \eqsim a_{p,\varepsilon}(|\xi|+|\eta|) |\xi - \eta| \eqsim b_{p,\varepsilon}'(|\xi|+|\xi-\eta|) |\xi - \eta|  \qquad \text{for $\xi, \eta \in \mathbb R^{n\times n}$,}
\end{align}
and for every $\varepsilon \in (0, 1)$,
with equivalence constants depending only on $p$. Equation \eqref{nov60'} follows via an application of \cite[Lemma 40 and Lemma 41]{DieningFornasierWank20}.

\begin{remark}
  \label{rem:Orlicz1}
{\rm As mentioned in Section \ref{S:intro}, systems more general than \eqref{eq:system}, where $\mathcal A_p(\eps u)$ is replaced with a law for the stress tensor $\mathcal{A}(\eps u)$ given by \eqref{mathcalA}, can be investigated via the methods of this paper. The Carreau law in~\eqref{Apnu} is one instance. 
This would require suitable assumptions on the indices $i_a$ and $s_a$ of the function~$a$ which defines $\mathcal A$, and  approximations analogous to those introduced for $a_p$.
In particular,
the  functions $a_\varepsilon : [0,\infty) \to (0,\infty)$, defined, for $\varepsilon \in (0,1)$, as 
$$a_\varepsilon (t)=  \frac{a\big(\sqrt{\varepsilon + t^2}\big) +
\varepsilon}{1 + \varepsilon a\big(\sqrt{\varepsilon + t^2}\big) } \quad
\quad \hbox{for $t \geq 0$,}$$
play  the role of $a_{p,\varepsilon}$.
The function $a_\varepsilon$, and the functions $b_\varepsilon$, $B_\varepsilon$, and $\mathcal A_\varepsilon$ defined accordingly, enjoy  properties parallel to those of $a_{p,\varepsilon}$, $b_{p,\varepsilon}$, $B_{p,\varepsilon}$, and $\mathcal A_{p,\varepsilon}$. Most of these properties are a consequence of the inequalities
\begin{align*}
    \min\{i_a, 0\}\leq i_{a_\varepsilon}\leq s_{a_\varepsilon}\leq \max\{s_a, 0\},
\end{align*}
of which \eqref{indici} is a special case.}
\end{remark}

\section{Pointwise differential inequalities}
\label{sec:pointwise}

In this section we derive new critical pointwise identities and inequalities involving the symmetric gradient $p$-Laplace operator. Since the inequalities have to be applied to operators which regularize the symmetric $p$-Laplacian, they are proved for  symmetric gradient  operators with  more general Orlicz growth. 
The presence of the symmetric gradient makes them substantially more involved than those offered for the standard gradient in  \cite{CianchiMazya18,CianchiMazya19,BalciDieningCianchiMazya22}. For instance, whereas the latter require linear algebra techniques for matrices, the inequalities for the symmetric $p$-Laplacian that will presented entail dealing with tensors.

Given a function $u: \Omega \to \mathbb R^{m}$, we denote  by $\nabla u : \Omega \to \mathbb R^{m\times n}$ its gradient, namely a matrix-valued function whose $m$ rows are the gradients $\nabla u_i \in \mathbb R^{1\times n}$ of the components $u_i$, $i=1, \dots m$, of $u$. The notation $\nabla^T u$ will be employed to denote  $(\nabla u)^T$. The divergence  of a matrix-valued function $M: \Omega \to \mathbb R^{m\times n}$ is a function $\divergence M : \Omega \to \mathbb R^{m\times 1}$ whose components are the divergences of the rows of $M$. 

We begin by collecting some identities involving the symmetric gradient.
Assume that $u : \Omega \to \RRn$ is such that
$u\in W^{2,1}_{\rm loc}(\Omega)$.
Define the \emph{symmetric Laplace operator} as 
$$\Delta^{\sym} u= \divergence \eps u.$$
Namely, $\Delta^{\sym} u= \divergence(\mathcal A_2(\eps u))$.
Computations show that
\begin{equation} \label{eq:5}
	\Delta^{\sym} u = \tfrac{1}{2}\Delta u + \tfrac{1}{2}\nabla^T \divergence u. 
\end{equation}
More generally, if $p>1$, then
\begin{align}
	\divergence(\mathcal A_p(\eps u))=\tfrac 12|\eps u |^{p-2}\left(\Delta u + \nabla ^T \divergence u\right) + 
     (p-2)|\eps u|^{p-2}\frac{\eps u}{|\eps u|} (\nabla |\eps u|)^T.
\end{align}

The full second order gradient $\nabla ^2u$ and $\nabla \eps u$ are comparable, in the sense that there exists an invertible linear map $L:\RRnn\rightarrow\RRnn$ such that 
\begin{align}
    \label{nov1}
    \nabla ^2 u =L \nabla \eps u.
\end{align}
 This is a consequence of the identity
\begin{equation}\label{feb15}
	\partial_i \partial_j u_k = \partial_i \eps_{jk} u+\partial_j \eps_{ik} u-\partial_k \eps_{ij} u.
\end{equation}

\begin{lemma}
    \label{lemma:equiv}
   Assume that the function $a: (0, \infty) \to (0, \infty)$ is such that $a\in C^1((0, \infty))$ and its indices satisfy the conditions \eqref{infsup}. Let $\mathcal A$ be the function defined as in \eqref{mathcalA}.
   \\
(i) Let $u\in C^2(\Omega)$. 
 Then 
\begin{align}
    \label{nov50}
    \abs{\nabla (A(\eps u))}\eqsim a(|\eps u|)|\nabla \eps u|\eqsim a(|\eps u|)|\nabla^2 u|
\end{align}
in the set $\{x\in \Omega: \eps u \neq 0\}$,
up to equivalence constants depending on $i_a$ and $s_a$. Under the stronger assumption 
\eqref{dec1}, Equation \eqref{nov50} holds for every $x\in \Omega$, with the function $a$ extended at $0$ as in \eqref{a(0)}.

\noindent(ii) Let $u\in W^{2,2}_{\rm loc}(\Omega)$. Assume, that the condition \eqref{dec1} is in force and
\begin{align*}%\label{quadratic}
    \lambda \leq a(t) \leq \lambda^{-1} \quad \text{for $t>0$,}
\end{align*}
for some constant $\lambda \in (0, 1)$.
 Then Equation \eqref{nov50} holds   for a.e. $x\in \Omega$.
\end{lemma}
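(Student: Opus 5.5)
The plan is to compute the derivative of $\mathcal A(\xi)=a(|\xi|)\xi$ explicitly and show that its differential is a uniformly elliptic linear map on symmetric matrices, up to the factor $a(|\xi|)$. The second equivalence in \eqref{nov50} is independent of $a$: by \eqref{nov1} and \eqref{feb15}, $|\nabla^2 u|\eqsim|\nabla \eps u|$ pointwise, with constants depending only on $n$. Indeed, $\nabla\eps u$ is a linear function of $\nabla^2u$, and conversely $\nabla^2 u=L\nabla\eps u$ with $L$ invertible. So it suffices to prove the first equivalence.

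\textbf{Part (i).} At a point where $\xi=\eps u\neq 0$, the chain rule gives, for each direction $k$,
\begin{align*}
\partial_k \mathcal A(\eps u)=a(|\xi|)\Big(\partial_k\xi+\theta_a(|\xi|)\,\frac{\xi}{|\xi|}\Big(\frac{\xi}{|\xi|}:\partial_k\xi\Big)\Big),
\end{align*}
where $\theta_a$ is defined in \eqref{theta}. Write $\eta=\partial_k\xi$ and $e=\xi/|\xi|$. I will decompose $\eta=(e:\eta)e+\eta^\perp$ orthogonally. The bracket then equals $(1+\theta_a)(e:\eta)e+\eta^\perp$. Its norm is therefore bounded above and below by $\max\{1,1+s_a\}|\eta|$ and $\min\{1,1+i_a\}|\eta|$, respectively. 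Both constants are positive by \eqref{infsup}.

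Summing the squares over $k$ gives $|\nabla\mathcal A(\eps u)|\eqsim a(|\eps u|)|\nabla\eps u|$ on $\{\eps u\neq0\}$. Here $\mathcal A(\eps u)\in C^1$ near such points because $u\in C^2$ and $a\in C^1(0,\infty)$.

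Under \eqref{dec1}, the map $\mathcal A$ is $C^1$ on all of $\RRnn$ with $D\mathcal A(0)=a(0)\,\mathrm{Id}$. This follows from \eqref{dec2} and \eqref{dec3}: the term $a'(|\xi|)|\xi|\,e\otimes e$ tends to $0$, and $a(|\xi|)\to b'(0)$. At points where $\eps u=0$ we thus get $\nabla\mathcal A(\eps u)=a(0)\nabla\eps u$, which is consistent with the convention $\theta_a(0)=0$ in \eqref{theta0}. Hence the estimate holds everywhere.

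\textbf{Part (ii).} The main obstacle is justifying the chain rule for the Sobolev function $\eps u\in W^{1,2}_{\rm loc}$. Under \eqref{dec1} together with $\lambda\le a\le\lambda^{-1}$, the map $\mathcal A$ is $C^1$, and its differential computed above is bounded by $\lambda^{-1}\max\{1,1+s_a\}$. So $\mathcal A$ is globally Lipschitz and $C^1$.

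The chain rule for $C^1$ Lipschitz maps composed with $W^{1,2}$ functions then gives $\mathcal A(\eps u)\in W^{1,2}_{\rm loc}$ with $\nabla\mathcal A(\eps u)=D\mathcal A(\eps u)\nabla\eps u$ a.e. One can verify this by mollifying $u$: for $u_\delta\to u$ in $W^{2,2}_{\rm loc}$, we have $D\mathcal A(\eps u_\delta)\nabla\eps u_\delta\to D\mathcal A(\eps u)\nabla\eps u$ in $L^1_{\rm loc}$, by continuity and boundedness of $D\mathcal A$ and dominated convergence along an a.e.-convergent subsequence.

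The pointwise linear-algebra bound from part (i) then applies at a.e.\ $x$, including points where $\eps u=0$. Combining it with \eqref{nov1} yields \eqref{nov50} a.e.
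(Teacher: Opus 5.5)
Your proposal is correct and is essentially the paper's proof: the paper also factors the differential of $\mathcal A$ at $\eps u$ as $a(|\eps u|)$ times $\mathrm{I}+\theta_a(|\eps u|)\,e\otimes e$ and bounds its eigenvalues (your orthogonal splitting is the same computation), handles the set $\{\eps u=0\}$ via \eqref{dec3}, and for (ii) appeals to a Sobolev chain rule (the paper cites Marcus--Mizel where you give a mollification argument for the $C^1$, globally Lipschitz map $\mathcal A$). Your constants $\min\{1,1+i_a\}$ and $\max\{1,1+s_a\}$ are in fact the accurate ones: this matrix is the identity on the orthogonal complement of $e$ (eigenvalue $1$, not $0$ as the paper states), so the paper's claimed bounds $1+i_a\le\lambda_M\le\Lambda_M\le 1+s_a$ should be read with exactly your correction.
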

\begin{proof} (i) Let $w=(w_1, \dots , w_N)\in C^2(\Omega, \RRN)$, with $N \in \setN$. Then,
\begin{align}
    \label{nov53}
    \abs{\nabla (\mathcal A(w))} &= \sqrt{\sum_{i=1}^N
\bigg|a(|w|)\nabla w_i + a'(|w|) \nabla w_i\frac{w}{|w|} \otimes w \bigg|^2}
    \\ \nonumber & =    a(|w|)\sqrt{\sum_{i=1}^N
\bigg|\nabla w_i\bigg({\rm I}_N + \frac{a'(|w|)|w|}{a(|w|)} \frac{w}{|w|} \otimes \frac{w}{|w|}\bigg)}\bigg|^2
\end{align}
where ${\rm I}_N$ denotes the  identity matrix of dimension $N$. The identity \eqref{nov53} plainly holds at every point in $\Omega$ where $w\neq 0$. Under the assumption \eqref{dec1}, it continues to hold also where $w=0$, thanks to \eqref{dec3}, provided that  
 we set
\begin{align*}
    a'(|w|) \frac{w}{|w|} \otimes w =0 \qquad \text{if $w=0$.}
\end{align*}
 Now, given any unit vector $\upsilon \in \mathbb R^N$, consider the symmetric matrix $M$ defined as 
$$M= \mathrm{I}_N + \frac{a'(|w|)|w|}{a(|w|)} \upsilon \otimes \upsilon,$$
and denote by $\lambda_M$ and $\Lambda_M$ its smallest and largest eigenvalue, respectively. We claim that
\begin{align}
    \label{nov65}
0< 1+i_a \leq  \lambda_M \leq \Lambda_M \leq 1+s_a<\infty.
\end{align}
Indeed,  owing to \eqref{infsup},
\begin{align}\label{nov68}
    -1<i_a\leq \frac{a'(|w|)|w|}{a(|w|)}\leq s_a <\infty.
\end{align} 
One has that
\begin{align*}
   \upsilon M = \upsilon\big(1+ a'(|w|)|w|/a(|w|)\big).
\end{align*}
On the other hand, if $\upsilon \cdot w=0$, then
\begin{align*}
   \upsilon M  =0.
\end{align*}
Therefore, $\big(1+ a'(|w|)|w|/a(|w|)\big)$ and $0$ are eigenvalues of $M$ of multiplicity $1$ and $n-1$, respectively. Equation \eqref{nov65} hence follows via \eqref{nov68}. 
As the eigenvalues of $M^2$ agree with the squares of those of $M$, one has that
\begin{align*}
 (1+i_a)^2 \leq  \lambda_{M^2} \leq \Lambda_{M^2} \leq (1+s_a)^2.
\end{align*}
On the other hand,
\begin{align*}
    |\nabla w_i M|= \sqrt{\nabla w_i M \cdot \nabla w_i M}= \sqrt{ \nabla w_i\cdot \nabla w_i M^2}
\end{align*}
for $i=1, \dots N$.
Hence,
\begin{align}
    \label{nov71}
  (1+i_a)|\nabla w_i|\leq  |\nabla w_i M|\leq (1+s_a)|\nabla w_i|.
\end{align}
Coupling \eqref{nov53} with \eqref{nov71} tells us that
\begin{align}
    \label{nov72}
 (1+i_a) a(|w|) |\nabla w|   \leq \abs{\nabla (\mathcal A(w))}\leq (1+s_a) a(|w|) |\nabla w|.
\end{align}
The first equivalence in \eqref{nov50} follows via and application of \eqref{nov72} with $N={n^2}$ and $w=\eps u$. The second equivalence is a consequence of \eqref{feb15}.
\\
(ii) The proof follows along the same lines as in case (i). One just has to observe that, under the current assumptions on the function $a$, Equation \eqref{nov53} holds for a.e. $x\in \Omega$ thanks to the chain rule for vector valued Sobolev functions \cite{Marcus-Mizel}.
\end{proof}

    \begin{lemma}
        \label{lem:DeltaSymDelta}
        Assume that $u\in C^3(\Omega)$. Then, 
        \begin{align}
            \label{DeltaSymDelta}
            \Delta^\sym u \cdot \Delta u =  \abs{\nabla \eps u}^2 + \divergence \Big((\Delta u)^T\eps u  -  \nabla (\tfrac 12\abs{\eps u}^2) \Big).
        \end{align}
    \end{lemma}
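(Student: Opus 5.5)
The plan is a direct computation in index notation. We write $\eps_{ij}=\eps_{ij}u=\tfrac12(\partial_j u_i+\partial_i u_j)$ and sum over repeated indices. Then $(\Delta^\sym u)_i=\partial_j\eps_{ij}$, $(\Delta u)_i=\partial_j\partial_j u_i$, and the $j$-th component of the row vector $(\Delta u)^T\eps u$ is $\Delta u_i\,\eps_{ij}$. Since $u\in C^3(\Omega)$, the entries $\eps_{ij}$ are $C^2$, so $\Delta\eps_{ij}$ exists and every manipulation below is classical. The strategy is to start from $\abs{\nabla\eps u}^2$ and integrate by parts pointwise twice, each time moving one derivative off a factor.

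\emph{First step.} By the product rule,
\begin{align*}
\abs{\nabla \eps u}^2=\partial_k\eps_{ij}\,\partial_k\eps_{ij}=\partial_k\big(\eps_{ij}\,\partial_k\eps_{ij}\big)-\eps_{ij}\,\Delta\eps_{ij}.
\end{align*}
The first term equals $\divergence\nabla(\tfrac12\abs{\eps u}^2)$.

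\emph{Second step.} We have $\Delta\eps_{ij}=\tfrac12(\partial_j\Delta u_i+\partial_i\Delta u_j)$. Contracting with the symmetric matrix $\eps_{ij}$ makes the two halves coincide, so $\eps_{ij}\Delta\eps_{ij}=\eps_{ij}\,\partial_j\Delta u_i$. Applying the product rule once more gives
\begin{align*}
\eps_{ij}\,\partial_j\Delta u_i=\partial_j\big(\eps_{ij}\,\Delta u_i\big)-\partial_j\eps_{ij}\,\Delta u_i=\divergence\big((\Delta u)^T\eps u\big)-\Delta^\sym u\cdot\Delta u .
\end{align*}

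\emph{Conclusion.} Substituting the second step into the first yields
\begin{align*}
\abs{\nabla\eps u}^2=\divergence\nabla(\tfrac12\abs{\eps u}^2)-\divergence\big((\Delta u)^T\eps u\big)+\Delta^\sym u\cdot\Delta u,
\end{align*}
and rearranging gives \eqref{DeltaSymDelta}.

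The only delicate point is the symmetry reduction in the second step. It is what turns $\eps_{ij}\Delta\eps_{ij}$ into an expression involving $\Delta u$ rather than $\nabla^T\divergence u$, and therefore produces $\Delta^\sym u\cdot\Delta u$ instead of $\abs{\Delta^\sym u}^2$. Otherwise the computation is routine bookkeeping of the transpose and row/column conventions fixed before Lemma \ref{lemma:equiv}.
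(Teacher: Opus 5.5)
Your computation is correct and is essentially the paper's proof run in the opposite direction. The paper starts from $\Delta^{\sym} u\cdot\Delta u$, moves the derivative off $\eps_{ij}u$, uses the same $i\leftrightarrow j$ symmetrization to replace $\eps_{ij}u\,\partial_{lli}u_j$ by $\eps_{ij}u\,\partial_{ll}\eps_{ij}u$, and integrates by parts once more. These are exactly your two product-rule steps and your symmetry reduction, taken in reverse order.
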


    \begin{proof}
        The identity \eqref{DeltaSymDelta} follows from the following chain:
        \begin{align*}
            \Delta^\sym u \cdot \Delta u &= \sum_{ijl} \partial_i \eps_{ij}u \,\partial_{ll} u_j
            =\sum_{ijl} \partial_i (\eps_{ij}u\, \partial_{ll} u_j) - \eps_{ij} u \, \partial_{lli} u_j
            \\
            &=\sum_{ijl} \partial_i (\eps_{ij}u\, \partial_{ll} u_j) - \eps_{ij} u \, \partial_{ll} \eps_{ij}u
            \\
            &=\sum_{ijl} \partial_i (\eps_{ij}u\, \partial_{ll} u_j) -\partial_l ( \eps_{ij} u \, \partial_l \eps_{ij}u) + \partial_l \eps_{ij}u \, \partial_l \eps_{ij}u 
            \\
            &=  \divergence ((\Delta u)^T\eps u  )-\divergence (\tfrac 12 \nabla  (\abs{\eps u}^2) ) +\abs{\nabla \eps u}^2.
        \end{align*}
Notice that the third equality holds 
 by a symmetrization argument between the indices $i$ and $j$. 
    \end{proof}

   The pointwise identity contained in the following lemma holds for any dimension $n\geq 2$, but  will play a role in the proof of our main result only for $n\leq 7$.

    \begin{lemma}[First pointwise identity]
        \label{lem:pw_identity_lowdim}
        Let $a\colon (0,\infty ) \to (0,\infty)$ be such that $a\in C^1((0,\infty))$.  Let $\theta_a$ and  $\mathcal A$ be the functions defined as in \eqref{theta} and \eqref{mathcalA}.
 Assume that $u\in C^3(\Omega)$. Then,  
        \begin{align}
                \label{pw_identity_lowdim}
                2a(\abs{\eps u})\,\divergence (\mathcal A(\eps u)) \cdot \Delta u  &= a^2(\abs{\eps u})\Big[ \abs{\nabla \eps u}^2 +\tfrac 12 \abs{\Delta u}^2+ 2 \theta_a(\abs{\eps u}) \abs{\nabla\abs{\eps u}}^2  +\tfrac 12 \nabla \divergence u \Delta u \Big]         \\ \nonumber 
                &\quad  +\divergence \Big(a^2(\abs{\eps u}) \big((\Delta u)^T\eps u -\tfrac 12 \nabla (\abs{\eps u}^2) \big) \Big)
        \end{align}
         in the set $\set{x\in \Omega: \eps u(x) \neq 0}$. Under the additional assumption \eqref{dec1}, the identity \eqref{pw_identity_lowdim} holds for every $x\in \Omega$, provided that, according to \eqref{theta0}, the term depending on  $\theta_a$ is taken to be $0$ at every point where $\eps u=0$.
    \end{lemma}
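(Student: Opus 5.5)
The plan is a direct computation: expand the left-hand side with the product rule, then invoke Lemma \ref{lem:DeltaSymDelta} on only \emph{half} of the resulting $a^2\,\Delta^\sym u\cdot\Delta u$ term. That half produces the divergence term and cancels the $a'$-contributions. The other half is rewritten via \eqref{eq:5}.

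\emph{Step 1 (points where $\eps u\neq 0$).} There $|\eps u|$ is $C^2$ and $\nabla |\eps u| = \frac{\eps u}{|\eps u|}:\nabla \eps u$. Write $a$, $a'$ for $a(|\eps u|)$, $a'(|\eps u|)$. The product rule gives
\begin{align*}
\divergence(\mathcal A(\eps u)) = a\,\Delta^\sym u + a'\,\eps u\,(\nabla|\eps u|)^T,
\end{align*}
hence
\begin{align*}
2a\,\divergence(\mathcal A(\eps u))\cdot\Delta u = 2a^2\,\Delta^\sym u\cdot \Delta u + 2aa'\,\Delta u\cdot \eps u\,(\nabla|\eps u|)^T .
\end{align*}
Split $2a^2\Delta^\sym u\cdot\Delta u$ into two equal halves.

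For one half, multiply \eqref{DeltaSymDelta} by $a^2$ and set $X=(\Delta u)^T\eps u-\tfrac12\nabla(|\eps u|^2)$. Use $a^2\divergence X=\divergence(a^2X)-2aa'\,\nabla|\eps u|\cdot X$. By the symmetry of $\eps u$ one has $\nabla|\eps u|\cdot (\Delta u)^T\eps u=\Delta u\cdot\eps u(\nabla|\eps u|)^T$, and $\nabla|\eps u|\cdot \tfrac12\nabla|\eps u|^2=|\eps u|\,|\nabla|\eps u||^2$. Consequently this half equals
\begin{align*}
a^2|\nabla\eps u|^2+\divergence(a^2X)-2aa'\,\Delta u\cdot\eps u(\nabla|\eps u|)^T+2a^2\theta_a(|\eps u|)\,|\nabla|\eps u||^2 .
\end{align*}
The third term exactly cancels the $a'$-term from the expansion above.

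For the other half, \eqref{eq:5} gives $a^2\Delta^\sym u\cdot\Delta u=a^2\big(\tfrac12|\Delta u|^2+\tfrac12\nabla\divergence u\,\Delta u\big)$. Summing the pieces yields \eqref{pw_identity_lowdim}.

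\emph{Step 2 (points where $\eps u(x_0)=0$, under \eqref{dec1}).} This is the only delicate point, since $|\eps u|$ need not be differentiable at $x_0$; I would argue directly with difference quotients. Here $a$ extends continuously to $0$ by \eqref{dec2}.

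First, $\mathcal A(\eps u)=a(|\eps u|)\eps u$ with $\eps u(x_0+h)=O(|h|)$ and $a(|\eps u(x_0+h)|)-a(0)\to0$. Hence $\mathcal A(\eps u)(x_0+h)-a(0)\eps u(x_0+h)=o(|h|)$, so $\nabla\mathcal A(\eps u)(x_0)=a(0)\nabla\eps u(x_0)$ and $\divergence \mathcal A(\eps u)(x_0)=a(0)\Delta^\sym u(x_0)$.

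Second, $X=O(|\eps u|)$ vanishes at $x_0$ and is $C^1$. The same argument gives $\divergence(a^2X)(x_0)=a(0)^2\divergence X(x_0)$. For the divergence term to be meaningful one also needs $a^2X\in C^1$ near $x_0$; this follows because $2aa'\nabla|\eps u|\otimes X$ has size $\lesssim a|a'||\eps u|\,|\nabla\eps u|^2\to0$ by \eqref{dec3}.

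With these two facts, the identity at $x_0$ reduces to $a(0)^2$ times $2\Delta^\sym u\cdot\Delta u$. Treat one copy with Lemma \ref{lem:DeltaSymDelta} and the other with \eqref{eq:5}; the $\theta_a$-term is set to $0$ by \eqref{theta0}. This establishes \eqref{pw_identity_lowdim} everywhere.
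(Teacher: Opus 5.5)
Your proposal is correct and follows the paper's argument. The paper adds \eqref{snow1} to \eqref{snow2}, where \eqref{snow2} comes from applying Lemma~\ref{lem:DeltaSymDelta} to the term $a^2\,\Delta^{\sym}u\cdot\Delta u$ in \eqref{snow1}; this is exactly your splitting of $2a^2\,\Delta^{\sym}u\cdot\Delta u$ into one half treated by Lemma~\ref{lem:DeltaSymDelta}, so that the $a'$-cross terms cancel, and one half expanded via \eqref{eq:5}. At zeros of $\eps u$ you compare $\mathcal A(\eps u)$ and $a^2X$ with $a(0)\eps u$ and $a(0)^2X$ up to $o(|h|)$, instead of the paper's chain rule through the $C^1$ extension \eqref{2026-5}; this is only a cosmetic variation (and the identity $\nabla|\eps u|\cdot(\Delta u)^T\eps u=\Delta u\cdot\eps u\,(\nabla|\eps u|)^T$ does not actually need the symmetry of $\eps u$).
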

    
    \begin{proof}[Proof of Lemma \ref{lem:pw_identity_lowdim}] For simplicity of notation, we set 
         $\theta = \theta_a (\abs{\eps u})$ and $a= a(\abs{\eps u})$ throughout this proof.
\\ First,
        consider any point $x\in \Omega$ such that $\eps u (x)\neq 0$. Then, the following computations can be verified via standard calculus rules.
        Since $\nabla a =  a \theta \abs{\eps u}^{-1} \nabla \abs{\eps u}$, one has that
        \begin{align}
                \label{snow1}
                a \divergence (a\,\eps u) \cdot \Delta u &= a^2 \Delta^\sym u \cdot \Delta u + a^2\, \theta \nabla \abs{\eps u} \frac{\eps u}{\abs{\eps u}}\Delta u.  
        \end{align}
       Coupling Equation \eqref{snow1} with \eqref{DeltaSymDelta} yields:
        \begin{align*}
            a \divergence (a\,\eps u) \cdot \Delta u &= a^2 \Big( \abs{\nabla \eps u}^2 + \divergence \big((\Delta u)^T\eps u - \nabla (\tfrac 12 \abs{\eps u}^2) \big)+\theta \nabla \abs{\eps u} \frac{\eps u}{\abs{\eps u}}\Delta u\Big) 
            \\
            &= a^2 \Big( \abs{\nabla \eps u}^2 +\theta \nabla \abs{\eps u} \frac{\eps u}{\abs{\eps u}}\Delta u\Big) + \divergence \big( a^2((\Delta u)^T\eps u  - \nabla (\tfrac 12\abs{\eps u}^2)) \big)
            \\
            &\quad - \nabla (a^2)  \big(\eps u \Delta u - \nabla^T (\tfrac 12\abs{\eps u}^2)\big).
        \end{align*}
        On the other hand,
        \begin{align*}
            \nabla (a^2) \big(\eps u \Delta u - \nabla^T (\tfrac 12\abs{\eps u}^2)\big)&=2a^2 \frac{\theta}{\abs{\eps u}} \nabla \abs{\eps u} \big(\eps u \Delta u - \abs{\eps u}\nabla^T \abs{\eps u}\big)
            \\
            &=2a^2 \theta \nabla \abs{\eps u} \Big(\frac{\eps u}{\abs{\eps u}} \Delta u -\nabla^T \abs{\eps u}\Big).
        \end{align*}
        Combining the last two identities tells us that
        \begin{align}
            \label{snow2}
            \begin{aligned}
                a \, \divergence (a\, \eps u) \cdot \Delta u &= a^2 \Big( \abs{\nabla \eps u}^2 +2\theta \abs{\nabla \abs{\eps u}}^2 - \theta \nabla \abs{\eps u}\frac{\eps u}{\abs{\eps u}}\Delta u \Big)
                \\
                &\quad + \divergence \big( a^2((\Delta u)^T\eps u  - \nabla (\tfrac 12\abs{\eps u}^2)) \big).
            \end{aligned}
        \end{align}
        The  identity 
    \eqref{pw_identity_lowdim} follows by adding \eqref{snow1} and \eqref{snow2} and recalling \eqref{eq:5}.
\\
Next, assume that \eqref{dec1} holds and let  $x\in \Omega$ be such that $\eps u(x)=0$.
        One has that
        \begin{align}
            \label{2026-5}
            \lim_{\xi \to 0}\nabla_\xi (a(|\xi|)\xi)= \lim_{\xi \to 0} \left[a(|\xi|){\rm I}_n+ a'(|\xi|)\frac{\xi}{|\xi|}\otimes \xi\right]= a(0){\rm I}_n,
        \end{align}
        where the second equality holds owing to \eqref{dec3}. Thus, the function $\xi \mapsto a(|\xi|)\xi$ is continuously differentiable also at $0$, and from the chain rule we infer that
        \begin{align}
            \label{snow3}
            a\divergence(a \eps u)\cdot \Delta u &=  a^2 \Delta ^\sym u \cdot \Delta u.
        \end{align}
       Coupling the latter equality with \eqref{DeltaSymDelta} yields
        \begin{align}
            \label{snow4}
            a\divergence(a \eps u) \cdot \Delta u = a^2 \Big(\abs{\nabla \eps u}^2 + \divergence \big(  (\Delta u)^T\eps u- \nabla  (\tfrac 12 \abs{\eps u}^2) \big) \Big).
        \end{align} 
        Now, set
        \begin{align*}
            F(z)= a^2 (\abs{\eps u(z)})\Big((\Delta u)^T\eps u(z) - \nabla \big(\tfrac 12 \abs{\eps u(z)}^2\big) \Big)    .
        \end{align*}
The function $F$ is continuous at $x$. Moreover, thanks to the property \eqref{dec3}, 
  \begin{align*}
            \lim_{z\to x} \nabla F(z) = a^2(0) \nabla \Big((\Delta u)^T\eps u - \nabla \big(\tfrac 12 \abs{\eps u}^2\big)\Big)(x).
        \end{align*}
        This shows that $F$ is also continuously differentiable at $x$ and
        $$\nabla F(x) = a^2(0) \nabla \Big((\Delta u)^T\eps u - \nabla \big(\tfrac 12 \abs{\eps u}^2\big)\Big)(x).$$
        Hence,
        \begin{align}
            \label{snow5}
            \divergence \Big(a^2 \big((\Delta u)^T\eps u - \nabla(\tfrac 12 \abs{\eps u}^2)\big) \Big)=a^2 \divergence \Big((\Delta u)^T\eps u  - \nabla(\tfrac 12 \abs{\eps u}^2) \Big).
        \end{align} 
        Adding the identities \eqref{snow3} and \eqref{snow4} and combining the resultant  with \eqref{snow5} establish  the identity \eqref{pw_identity_lowdim} also at every point $x$ where $\eps u (x)=0$. The   proof is complete.
    \end{proof}

The next lemma provides us with an alternative identity,
to be used in the proof of our main result for  combinations of the values of $n$ and $p$ complementary to those covered via Lemma \ref{lem:pw_identity_lowdim}.

    \begin{lemma}
        [Second pointwise identity]
        \label{lem:pw_identity_alldim}
         Let $a\colon (0,\infty ) \to (0,\infty)$ be such that $a\in C^1((0,\infty))$.
         Let $\theta_a$ and  $\mathcal A$ be the functions defined as in \eqref{theta} and \eqref{mathcalA}.
         Assume that $u\in C^3(\Omega)$. Then,  
        \begin{align}
            \label{pw_identity_alldim}
        \begin{aligned}
                \lefteqn{2a(\abs{\eps u})\,\divergence (\mathcal A(\eps u)) \cdot \Delta u} \quad &
                \\ 
                &=a^2(\abs{\eps u})\bigg[ \abs{\nabla \eps u}^2\!+ \tfrac 12 \abs{\Delta u}^2 \!+\tfrac 12 \abs{\nabla \divergence u}^2 
               \!+2\theta_a (\abs{\eps u})\abs{\nabla \abs{\eps u}}^2\!
               \\
               &\hspace{20mm} - \theta_a (\abs{\eps u}) \nabla \abs{\eps u}(\Delta u \!-\!\nabla^T \!\divergence u)\frac{\divergence u}{\abs{\eps u}} \bigg]
                \\ 
                &\quad +\divergence \Big(a^2(\abs{\eps u}) \big( (\Delta u)^T \eps u  -\tfrac 12 \nabla (\abs{\eps u}^2) +\tfrac 12 \divergence u ((\Delta u)^T - \nabla \divergence u)\big) \Big)
            \end{aligned}
        \end{align}
 in the set $\set{x\in \Omega: \eps u(x) \neq 0}$. Under the additional assumption \eqref{dec1}, the identity \eqref{pw_identity_alldim} holds for every $x\in \Omega$, provided that, according to \eqref{theta0}, the term depending on  $\theta_a$ is taken to be $0$ at every point where $\eps u=0$.
    \end{lemma}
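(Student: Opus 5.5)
The plan is to derive \eqref{pw_identity_alldim} from the first pointwise identity \eqref{pw_identity_lowdim} of Lemma~\ref{lem:pw_identity_lowdim}, rather than redoing the computation from scratch. Comparing the two right-hand sides, they share the terms $a^2\big[\abs{\nabla\eps u}^2+\tfrac12\abs{\Delta u}^2+2\theta_a\abs{\nabla\abs{\eps u}}^2\big]$ and the divergence term $\divergence\big(a^2((\Delta u)^T\eps u-\tfrac12\nabla(\abs{\eps u}^2))\big)$. Hence it suffices to prove the following identity, in which the only term of \eqref{pw_identity_lowdim} not already present in \eqref{pw_identity_alldim} is rewritten:
\begin{align*}
\tfrac12 a^2\,\nabla\divergence u\,\Delta u
= \tfrac12 a^2\abs{\nabla\divergence u}^2 - a^2\theta_a\,\nabla\abs{\eps u}\,(\Delta u-\nabla^T\divergence u)\frac{\divergence u}{\abs{\eps u}}
+\divergence\Big(\tfrac12 a^2\divergence u\,\big((\Delta u)^T-\nabla\divergence u\big)\Big).
\end{align*}
Here $a=a(\abs{\eps u})$ and $\theta_a=\theta_a(\abs{\eps u})$, as in the proof of Lemma~\ref{lem:pw_identity_lowdim}.

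At a point where $\eps u\neq 0$, I would expand the divergence with the product rule. The key observation, which uses $u\in C^3(\Omega)$, is that
\begin{align*}
\divergence\big((\Delta u)^T-\nabla\divergence u\big)=\Delta\divergence u-\Delta\divergence u=0,
\end{align*}
since $\divergence\Delta u=\Delta\divergence u$. Hence only two contributions survive:
\begin{align*}
\divergence\Big(\tfrac12 a^2\divergence u\,\big((\Delta u)^T-\nabla\divergence u\big)\Big)
=\tfrac12\nabla(a^2)\,\divergence u\,(\Delta u-\nabla^T\divergence u)+\tfrac12 a^2\big(\nabla\divergence u\,\Delta u-\abs{\nabla\divergence u}^2\big).
\end{align*}
Inserting $\nabla(a^2)=2a^2\theta_a\abs{\eps u}^{-1}\nabla\abs{\eps u}$, which was already used in that proof, and rearranging gives exactly the displayed identity. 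Substituting it into \eqref{pw_identity_lowdim} then yields \eqref{pw_identity_alldim}.

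At points where $\eps u(x)=0$, assuming \eqref{dec1}, I would argue as in the second part of the proof of Lemma~\ref{lem:pw_identity_lowdim}. Identity \eqref{pw_identity_lowdim} already holds there, with the $\theta_a$-term set to $0$. Consider the vector field
\begin{align*}
G(z)=a^2(\abs{\eps u(z)})\,\divergence u(z)\,\big((\Delta u)^T-\nabla\divergence u\big)(z).
\end{align*}
Using \eqref{dec3}, I would show that $G$ is continuously differentiable at $x$ with
\begin{align*}
\nabla G(x)=a^2(0)\,\nabla\Big(\divergence u\,\big((\Delta u)^T-\nabla\divergence u\big)\Big)(x).
\end{align*}
The point is that the extra term in $\nabla G$ is $2a\,a'(\abs{\eps u})\nabla\abs{\eps u}$ multiplied by a bounded quantity, and the $\theta_a$-term in \eqref{pw_identity_alldim} is the same kind of expression, namely $a\,a'(\abs{\eps u})$ times a bounded quantity. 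Both tend to $0$ as $z\to x$, because $a'(t)\to 0$ as $t\to0^+$ in the relevant sense. The same product-rule computation, with $\nabla(a^2)$ replaced by $0$, then gives the identity at $x$.

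The only genuinely delicate step is this degenerate set. In particular, one has to make sure that $a'(t)$ itself, and not merely $a'(t)t$, is controlled near $0$, or else to use directly that $\abs{\divergence u}\le\sqrt n\,\abs{\eps u}$. That bound keeps $\divergence u/\abs{\eps u}$ bounded, so that \eqref{dec3} suffices. The computation away from this set is routine.
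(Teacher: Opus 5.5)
Your proposal is correct and follows the paper's proof. You rewrite the term $\tfrac12 a^2\nabla\divergence u\,\Delta u$ of \eqref{pw_identity_lowdim} via the product rule, and your observation $\divergence\big((\Delta u)^T-\nabla\divergence u\big)=0$ is the paper's index computation in compact form. At points where $\eps u=0$ you argue exactly as in \eqref{snow5}, and your remark that $|\divergence u|\le\sqrt n\,|\eps u|$ is what makes \eqref{dec3} suffice there is the right justification for the step the paper leaves implicit.
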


    \begin{proof} As in the previous proof, 
        we adopt the simplified notation $a= a(\abs{\eps u})$ and $\theta = \theta_a (\abs{\eps u})$.
\\ Fix $x\in \Omega$. Assume first  that $\eps u(x)\neq 0$. Then,
        \begin{align*}
                \nabla \divergence u \cdot \Delta u & = \sum_{kl} \partial_k \divergence u \,\partial_{ll}u_k
               =\sum_{kl} \partial_k (\divergence u \, \partial_{ll}u_k) - \divergence u \,\partial_{kll}u_k
                \\ \nonumber 
                &= \sum_{kl} \partial_k (\divergence u \, \partial_{ll}u_k) - \partial_l (\divergence u \, \partial_{lk}u_k) + \partial_l \divergence u \, \partial_{lk} u_k
                \\
                &= \divergence (\divergence u \, (\Delta u)^T - \divergence u \, \nabla \divergence u) + \abs{\nabla \divergence u}^2.
        \end{align*}
       Hence,
        \begin{align*}
            a^2\,\nabla \divergence u  \Delta u &=a^2\Big(\divergence (\divergence u \, (\Delta u )^T - \divergence u \, \nabla \divergence u) + \abs{\nabla \divergence u}^2\Big)
            \\
            &=\divergence \Big( a^2 \divergence u \big((\Delta u)^T -  \nabla \divergence u\big)\Big) + a^2\, \abs{\nabla \divergence u}^2
            - \nabla (a^2) \divergence u \big(\Delta u - \nabla^T \divergence u\big)
            \\
            &=\divergence \Big( a^2 \divergence u \big((\Delta u)^T -  \nabla  \divergence u\big)\Big) + a^2\, \abs{\nabla \divergence u}^2
            - 2a^2 \theta\, \frac{\divergence u}{\abs{\eps u}} \nabla \abs{\eps u}\big(\Delta u - \nabla^T \divergence u\big).
        \end{align*}
        Coupling  the latter identity with \eqref{pw_identity_lowdim} yields \eqref{pw_identity_alldim}.
\\ Now, suppose that $\eps u(x)=0$. One can argue as above, and make use of the fact that the last term in the second line of the latter chain is missing, since
        \begin{align*}
            a^2\divergence \Big( \divergence u \big((\Delta u)^T -  \nabla \divergence u\big)\Big) = \divergence \Big( a^2 \divergence u \big((\Delta u)^T -  \nabla \divergence u\big)\Big).
        \end{align*}
This identity can be verified analogously to  \eqref{snow5}.
    \end{proof}

    In Lemmas \ref{lem:pw_ineq_lowdim} and \ref{lem:pw_ineq_alldim} pointwise estimates for the expression $a(|\eps u|)^2\abs{\nabla \eps u}^2 \eqsim\abs{\nabla (A(\eps u))}$ are derived from Lemmas \ref{lem:pw_identity_lowdim} and \ref{lem:pw_identity_alldim}, respectively, under suitable assumptions on the indices $i_a$ and $s_a$. Note that  Lemma \ref{lem:pw_ineq_lowdim} only holds for  $n\leq 7$. 

    \begin{lemma}[First pointwise differential inequality]
    \label{lem:pw_ineq_lowdim}
        Let $2\leq n \leq 7$ and let $a:(0,\infty) \to (0,\infty)$ 
        be such that the condition  \eqref{dec1} holds and
        \begin{align}
        \label{p_bounds_lowdim}
           - \frac{8-n}{16} <i_a \leq  s_a<\infty.   
        \end{align}
                 Let   $\mathcal A$ be the function defined by \eqref{mathcalA}.
        If $u\in C^3(\Omega)$, then  
        \begin{align}
            \label{pwineq1}
            \begin{aligned}
                \abs{\divergence (\mathcal A(\eps u))}^2&\geq c_1\, a^2(\abs{\eps u}) \abs{\nabla \eps u}^2 + c_2\,\divergence\Big(a^2(\abs{\eps u}) \big((\Delta u)^T\eps u -\tfrac 12 \nabla (\abs{\eps u}^2) \big) \Big) \quad \text{in $\Omega$}
            \end{aligned}
        \end{align}
        for some positive constants  $c_1$ and $c_2$  depending only on $i_a$, $s_a$, and $n$.
    \end{lemma}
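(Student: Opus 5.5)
The plan is to start from the first pointwise identity, Lemma~\ref{lem:pw_identity_lowdim}. Under \eqref{dec1} it holds at every point of $\Omega$, with the $\theta_a$-term set to $0$ where $\eps u=0$. I would bound its left-hand side from above by Young's inequality and the bracket on its right-hand side from below by a multiple of $|\nabla \eps u|^2$. The divergence term would simply be carried along. Throughout, write $a=a(|\eps u|)$ and $\theta=\theta_a(|\eps u|)$.

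\emph{Step 1 (lower bound for the bracket).} Complete the square in the $\Delta u$ terms:
\begin{align*}
\tfrac12|\Delta u|^2+\tfrac12\nabla\divergence u\,\Delta u=\tfrac12\big|\Delta u+\tfrac12\nabla^T\divergence u\big|^2-\tfrac18|\nabla\divergence u|^2\ge -\tfrac18|\nabla\divergence u|^2 .
\end{align*}
Since $\divergence u=\operatorname{tr}\eps u$, Cauchy--Schwarz gives $|\nabla\divergence u|^2\le n|\nabla\eps u|^2$. For the $\theta$-term, Kato's inequality $|\nabla|\eps u||\le|\nabla\eps u|$ holds where $\eps u\ne0$. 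There $\theta\ge i_a$, so $2\theta|\nabla|\eps u||^2\ge 2\min\{i_a,0\}|\nabla\eps u|^2$; where $\eps u=0$ the term vanishes by convention. Altogether the bracket is at least $\kappa|\nabla\eps u|^2$ with
\begin{align*}
\kappa=1-\tfrac n8+2\min\{i_a,0\}.
\end{align*}
This constant is positive exactly under \eqref{p_bounds_lowdim}, which explains both the restriction $n\le7$ and the threshold $-\frac{8-n}{16}$.

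\emph{Step 2 (Young's inequality on the left).} From \eqref{feb15}, $\Delta u_k=\sum_l\big(2\partial_l\eps_{lk}u-\partial_k\eps_{ll}u\big)$, hence $|\Delta u|^2\le C(n)|\nabla\eps u|^2$. For $\delta>0$,
\begin{align*}
2a\,\divergence(\mathcal A(\eps u))\cdot\Delta u\le \delta^{-1}|\divergence(\mathcal A(\eps u))|^2+\delta\, C(n)\,a^2|\nabla\eps u|^2 .
\end{align*}

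\emph{Step 3 (absorption).} Insert Steps 1--2 into \eqref{pw_identity_lowdim} and take $\delta=\kappa/(2C(n))$. This gives
\begin{align*}
\delta^{-1}|\divergence(\mathcal A(\eps u))|^2\ge \tfrac{\kappa}{2}a^2|\nabla\eps u|^2+\divergence\Big(a^2\big((\Delta u)^T\eps u-\tfrac12\nabla(|\eps u|^2)\big)\Big).
\end{align*}
Multiplying by $\delta$ yields \eqref{pwineq1} with $c_1=\delta\kappa/2$ and $c_2=\delta$. These depend only on $n$ and $i_a$; $s_a$ plays no role beyond guaranteeing that Lemma~\ref{lem:pw_identity_lowdim} applies.

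The main obstacle is Step 1, namely getting the sharp constant. Naively estimating $\tfrac12\nabla\divergence u\,\Delta u$ would cost more than $\tfrac n8|\nabla\eps u|^2$, whereas completing the square against $\tfrac12|\Delta u|^2$ gives exactly the admissible range \eqref{p_bounds_lowdim}. Care is also needed at points where $\eps u=0$. There one uses the version of the identity valid under \eqref{dec1}, in which the $\theta$-term vanishes, so Kato's inequality is not needed at those points.
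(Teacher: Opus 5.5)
Your proposal is correct and follows essentially the paper's argument. You apply Young's inequality to the left side of the first pointwise identity, use the optimal bound $\tfrac12|\nabla\divergence u\,\Delta u|\le\tfrac12|\Delta u|^2+\tfrac18|\nabla\divergence u|^2$ (your completed square) together with $|\nabla\divergence u|^2\le n|\nabla\eps u|^2$, and use $|\nabla|\eps u||\le|\nabla\eps u|$ when $\theta<0$; this yields the same constant $1-\tfrac n8+2\min\{i_a,0\}$. The only cosmetic difference is that you prove the bound first with $\delta=0$ and then absorb via $|\Delta u|^2\le C(n)|\nabla\eps u|^2$ (the device the paper uses in its other two pointwise inequalities), whereas here the paper carries $\delta$ through the Young weight $1-2\delta$ directly.
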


    \begin{proof}
    Let $\theta$ and $a$ be defined as in the previous proofs.
  Let $\delta\in (0,\frac 12)$ be a parameter to be chosen later. By Young's inequality, 
        \begin{align*}
            2a\, \divergence (a\,\eps u )\cdot \Delta u&\leq \delta a^2\,\abs{\Delta u}^2 + \delta^{-1}\abs{\divergence (a\,\eps u )}^2. 
        \end{align*}
        Owing to Lemma \ref{lem:pw_identity_lowdim}, the inequality \eqref{pwineq1} will follow if we show that
        \begin{align}
            \label{snow6}
            \abs{\nabla \eps u}^2 + \big(\tfrac 12 - \delta \big) \abs{\Delta u}^2 + 2\theta \abs{\nabla \abs{\eps u}}^2 + \tfrac 12 \nabla \divergence u \Delta u &\geq c\, \abs{\nabla \eps u}^2
        \end{align}
        for some constants $c>0$ and $\delta >0$  depending only on $n$, $i_a$, and $s_a$. As agreed in the statement of Lemma \ref{lem:pw_identity_lowdim}, in the set $\{x\in \Omega: \eps u(x)=0\}$ the term involving $\theta$ in the inequality  \eqref{snow6} is just missing. From Young's inequality again and the inequality
       $$\abs{\nabla \divergence u}^2\leq n \abs{\nabla \eps u}^2$$ we deduce that
        \begin{align*}
            \frac 12 \abs{\nabla \divergence u \Delta u} &\leq \frac{1-2\delta}{2} \abs{\Delta u}^2 + \frac{1}{8(1-2\delta)}\abs{\nabla \divergence u}^2
           \leq \frac{1-2\delta}{2} \abs{\Delta u}^2 + \frac{n}{8(1-2\delta)}\abs{\nabla \eps u}^2.
        \end{align*}
        Consequently,
        \begin{align}
            \label{snow7}
            \abs{\nabla \eps u}^2 + \big(\tfrac 12 - \delta \big) \abs{\Delta u}^2 + 2\theta \abs{\nabla \abs{\eps u}}^2 + \tfrac 12 \nabla \divergence u \Delta u &\geq \Big(1-\frac{n}{8(1-2\delta)}\Big)\abs{\nabla \eps u}^2 + 2\theta \abs{\nabla \abs{\eps u}}^2.
        \end{align}
        If $\theta \geq 0$, then, since we are assuming that $n\leq 7$, the inequality \eqref{snow6} follows from \eqref{snow7} for sufficiently small $\delta$. 
        \\ If $\theta <0$, then  $2\theta \abs{\nabla \abs{\eps u}}^2 \geq 2\theta \abs{\nabla\eps u}^2$. The latter inequality, in combination  with \eqref{snow7},  implies that
        \begin{align}\label{2026-7}
            \abs{\nabla \eps u}^2 + \big(\tfrac 12 - \delta \big) \abs{\Delta u}^2 + 2\theta \abs{\nabla \abs{\eps u}}^2 + \tfrac 12 \nabla \divergence u \Delta u &\geq \Big(1-\frac{n}{8(1-2\delta)}+2\theta\Big)\abs{\nabla \eps u}^2.
        \end{align}
Now, observe that, since $\theta\geq i_a$ and the assumption \eqref{p_bounds_lowdim} is in force,
we have that
$1-\frac{n}{8}+2\theta>0$.
Hence, the inequality \eqref{snow6} follows from \eqref{2026-7}, provided that $\delta$ is chosen small enough.
    \end{proof}

    \begin{lemma}[Second pointwise differential inequality]
    \label{lem:pw_ineq_alldim}
         Let $n\in \setN$. Let $a\colon (0,\infty ) \to (0,\infty)$ be such that  the condition \eqref{dec1} is fulfilled and
        \begin{align}
        \label{p_bounds_alldim}
-\frac{1}{\sqrt{n+1}+1}< i_a \leq s_a< \frac{1}{\sqrt{n+1}-1}.  
        \end{align}
        Let   $\mathcal A$ be the function defined by   \eqref{mathcalA}. 
        If $u\in C^3(\Omega)$,  then
        \begin{multline}
            \label{pwineq2}
                \abs{\divergence (\mathcal A(\eps u))}^2\geq c_1\, a^2(\abs{\eps u}) \abs{\nabla \eps u}^2 
                \\  +c_2\,\divergence \Big(a^2(\abs{\eps u}) \big( (\Delta u)^T\eps u -\tfrac 12 \nabla (\abs{\eps u}^2) +\tfrac 12 \divergence u ((\Delta u)^T - \nabla \divergence u)\big) \Big) \quad \text{in $\Omega$}
        \end{multline}
        for some positive constants $c_1$ and $c_2$ depending only on $i_a$, $s_a$, and $n$.
    \end{lemma}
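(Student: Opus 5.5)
The plan is to follow the scheme of the proof of Lemma \ref{lem:pw_ineq_lowdim}, this time starting from the second pointwise identity of Lemma \ref{lem:pw_identity_alldim}. Write $a=a(\abs{\eps u})$ and $\theta=\theta_a(\abs{\eps u})$. Young's inequality gives
\begin{align*}
2a\,\divergence(a\,\eps u)\cdot\Delta u\le \delta a^2\abs{\Delta u}^2+\delta^{-1}\abs{\divergence(a\,\eps u)}^2 .
\end{align*}
Inserting \eqref{pw_identity_alldim} on the left, \eqref{pwineq2} follows with $c_2=\delta$ once we prove the pointwise algebraic inequality
\begin{align*}
\abs{\nabla\eps u}^2+\big(\tfrac12-\delta\big)\abs{\Delta u}^2+\tfrac12\abs{\nabla\divergence u}^2+2\theta\abs{\nabla\abs{\eps u}}^2-\theta\,\nabla\abs{\eps u}\,(\Delta u-\nabla^T\divergence u)\frac{\divergence u}{\abs{\eps u}}\ge c\,\abs{\nabla\eps u}^2 ,
\end{align*}
with $c,\delta>0$ depending only on $n,i_a,s_a$. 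Where $\eps u=0$, the $\theta$-terms are absent by \eqref{theta0}, and the inequality is immediate for small $\delta$. So assume $\eps u\neq0$.

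First, control the mixed term. Since $\divergence u=\mathrm I_n:\eps u$ and $\abs{\mathrm I_n}=\sqrt n$, we have $\abs{\divergence u}\le\sqrt n\,\abs{\eps u}$. Set $x=\abs{\nabla\abs{\eps u}}$, $y=\abs{\Delta u-\nabla^T\divergence u}$ and $t=\abs{\theta}$. The mixed term is then bounded below by $-\sqrt n\,t\,xy$. By the parallelogram-type inequality $\tfrac12\abs{\Delta u}^2+\tfrac12\abs{\nabla\divergence u}^2\ge\tfrac14 y^2$, the left-hand side is at least
\begin{align*}
\abs{\nabla\eps u}^2+2\theta x^2-\sqrt n\,t\,xy+\tfrac14y^2-\delta\abs{\Delta u}^2 .
\end{align*}
Split $\abs{\nabla\eps u}^2=\eta\abs{\nabla\eps u}^2+(1-\eta)\abs{\nabla\eps u}^2$ and use the Kato-type bound $x\le\abs{\nabla\eps u}$ on the second part. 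This reduces matters to positivity of the binary quadratic form $(1-\eta+2\theta)x^2-\sqrt n\,t\,xy+\tfrac14y^2$. Its discriminant condition is $n t^2<1-\eta+2\theta$.

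The key step is to check this condition under \eqref{p_bounds_alldim}, uniformly in $\theta\in[i_a,s_a]$. For $\theta<0$ it reads $nt^2+2t-1<0$, that is, $t<\frac{\sqrt{n+1}-1}{n}=\frac1{\sqrt{n+1}+1}$. For $\theta\ge0$ it reads $nt^2-2t-1<0$, that is, $t<\frac{\sqrt{n+1}+1}{n}=\frac1{\sqrt{n+1}-1}$. These are exactly the bounds in \eqref{p_bounds_alldim}. Since the inequalities there are strict and $\theta$ ranges over the compact interval $[i_a,s_a]$, one can choose $\eta>0$ small, depending only on $n,i_a,s_a$, so that the form is nonnegative. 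The leftover $\eta\abs{\nabla\eps u}^2$ then absorbs $\delta\abs{\Delta u}^2\le\delta n\abs{\nabla\eps u}^2$ for $\delta$ small (here $\abs{\Delta u}^2\le n\abs{\nabla^2 u}^2\lesssim\abs{\nabla\eps u}^2$ via \eqref{feb15}). This yields $c=\eta/2$, say.

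I expect the main obstacle to be the bookkeeping in this step. One has to keep $\delta$ and $\eta$ as explicit margins so that the constants depend only on $n,i_a,s_a$, and one has to make sure the estimate $\abs{\divergence u}\le\sqrt n\abs{\eps u}$ loses nothing at the sharp thresholds. The fact that the computed thresholds coincide exactly with \eqref{p_bounds_alldim} indicates that this pairing of $\tfrac14y^2$ against $(1+2\theta)x^2$ is the intended one.
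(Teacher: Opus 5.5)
Your proposal is correct and follows the paper's proof essentially step for step. Both arguments use Young's inequality with $\delta$, the bounds $\abs{\divergence u}\le\sqrt n\,\abs{\eps u}$ and $\tfrac14\abs{\Delta u-\nabla^T\divergence u}^2\le\tfrac12\abs{\Delta u}^2+\tfrac12\abs{\nabla\divergence u}^2$, and $\abs{\nabla\abs{\eps u}}\le\abs{\nabla\eps u}$, which leads to the same condition $1+2\theta-n\theta^2>0$ and hence to \eqref{p_bounds_alldim}. The difference is only in packaging: the paper first reduces to $\delta=0$ and splits into the cases $2\theta-n\theta^2\ge0$ and $2\theta-n\theta^2<0$, whereas you write the same Young step as a discriminant condition with an explicit margin $\eta$ and absorb $\delta$ directly.
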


    \begin{proof}
        We define $\theta$ and $a$ as in the preceding proofs and use Young's inequality to deduce that
        \begin{align*}
            2a\, \divergence (a\,\eps u )\cdot \Delta u&\leq \delta a^2\,\abs{\Delta u}^2 + \delta^{-1}\abs{\divergence (a\,\eps u )}^2, 
        \end{align*}
        where $\delta\in (0,\frac 12)$ is a parameter to be chosen later. Thanks to Lemma \ref{lem:pw_identity_alldim}, the proof of the inequality \eqref{pwineq2}  is reduced to showing 
        \begin{align}
            \label{snow8}
                \abs{\nabla \eps u}^2 \!+\! (\tfrac 12 - \delta) \abs{\Delta u}^2\!+\! \tfrac 12 \abs{\nabla \divergence u}^2 \!+\! 2\theta \abs{\nabla \abs{\eps u}}^2 %&
                %\\ \nonumber
                \!+\!\theta \nabla \abs{\eps u}(\nabla^T\divergence u \!-\!\Delta u)\frac{\divergence u}{\abs{\eps u}}&\geq c \abs{\nabla \eps u}^2
        \end{align}
        in~$\Omega$, for suitable constants $c>0$ and $\delta\in (0,\frac 12)$, both depending only on $n$, $i_a$, and $s_a$. As specified in the statement of 
        Lemma \ref{lem:pw_identity_alldim},
         the terms involving $\theta$ in the inequality \eqref{snow8} are missing at those points  $x\in\Omega$ where $\eps u(x)=0$. 
         \\
         It suffices to establish \eqref{snow8} in the case when $\delta=0$, namely
         \begin{align}
            \label{snow80}
                \abs{\nabla \eps u}^2 + \tfrac 12  \abs{\Delta u}^2+ \tfrac 12 \abs{\nabla \divergence u}^2 + 2\theta \abs{\nabla \abs{\eps u}}^2
                +\theta \nabla \abs{\eps u}(\nabla^T\divergence u -\Delta u)\frac{\divergence u}{\abs{\eps u}}&\geq c \abs{\nabla \eps u}^2.
        \end{align}
    Indeed, if \eqref{snow80} holds for $\delta =0$ and for some constant $c>0$, then we can use the inequality $\abs{\Delta u}^2 \leq c(n) \abs{\nabla \eps u}^2$ to deduce that \eqref{snow8}  holds for sufficiently small $\delta\in (0,\frac 12)$, with $c$ replaced with  $c/2$. 
\\
Since $\divergence u=\trace (\eps u)$, we have that $\abs{\divergence u}^2 \leq n\abs{\eps u}^2$. This inequality and Young's inequality tell us that
        \begin{align}
            \label{snow9}
            \begin{aligned}
                \bigg|\theta \nabla \abs{\eps u}(\nabla^T\divergence u -\Delta u)\frac{\divergence u}{\abs{\eps u}}\bigg| \leq   \tfrac 12 \abs{\Delta u}^2 + \tfrac 12 \abs{\nabla \divergence u}^2  +n\theta^2 \abs{\nabla \abs{\eps u}}^2.
            \end{aligned}
        \end{align}
        One can estimate the left hand side of \eqref{snow80} from below via \eqref{snow9} and obtain:
        \begin{align}\label{2026-8}
                \abs{\nabla \eps u}^2 + \tfrac 12  \abs{\Delta u}^2+ \tfrac 12 \abs{\nabla \divergence u}^2 + 2\theta \abs{\nabla \abs{\eps u}}^2&
                \\ \nonumber
                +\theta \nabla \abs{\eps u}(\nabla^T\divergence u -\Delta u)\frac{\divergence u}{\abs{\eps u}}&\geq \abs{\nabla \eps u}^2 + (2\theta - n\theta^2) \abs{\nabla \abs{\eps u}}^2.
        \end{align}
        If $2\theta - n \theta^2 \geq 0$, then \eqref{snow80} follows with $c=1$. Otherwise,  the inequality $\abs{\nabla \abs{\eps u}}\leq \abs{\nabla \eps u}$ implies that
        \begin{align}\label{2026-9}
         \abs{\nabla \eps u}^2 + (2\theta - n\theta^2) \abs{\nabla \abs{\eps u}}^2   \geq \big(1+ 2\theta - n \theta^2\big)\abs{\nabla \eps u}^2. 
        \end{align}
       Inasmuch as $i_a\leq \theta \leq s_a$,  the assumption \eqref{p_bounds_alldim} ensures that $1+2\theta-n\theta^2>0$. The inequality \eqref{snow80}, with $c=1+2\theta-n\theta^2$, thus follows from \eqref{2026-8} and \eqref{2026-9}.
    \end{proof}

    In dimension $n=2$ the lower bound for the index $i_a$ required in Lemma \ref{lem:pw_ineq_lowdim} can be relaxed.  

    \begin{lemma}[Pointwise inequality for $n=2$]
        \label{lem:pw_ineq_2d}
        Assume that $n=2$.  Let $a:(0,\infty) \to (0,\infty)$ be such 
        the condition \eqref{dec1} is fulfilled and 
        \begin{align}
            \label{p_bounds_2d}
           -\frac{5}{2(4+\sqrt{6})} <i_a \leq  s_a<\infty.   
        \end{align}
        Let   $\mathcal A$ be the function defined by \eqref{mathcalA}. 
        Then the inequality \eqref{pwineq1} holds for every $u\in C^3(\Omega)$, and for some positive constant $c$ depending on $i_a$ and $s_a$.
    \end{lemma}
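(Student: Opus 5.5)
The plan is to follow the scheme of the proof of Lemma~\ref{lem:pw_ineq_lowdim}, but to replace the crude bounds used there by an exact computation of a finite-dimensional quadratic form, which is possible when $n=2$. As in that proof, Young's inequality together with Lemma~\ref{lem:pw_identity_lowdim} reduces \eqref{pwineq1} to the pointwise estimate \eqref{snow6}. Moreover, since $\abs{\Delta u}^2\le c\abs{\nabla\eps u}^2$, it suffices to prove \eqref{snow6} with $\delta=0$ and some $c>0$; the $\delta$-term is then absorbed at the cost of halving $c$. Where $\eps u=0$ the $\theta$-term is absent, and the case $\theta\ge0$ is already covered by \eqref{snow7}, since $1-\frac 28>0$. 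So the only case left is $i_a\le\theta<0$ at points with $\eps u\ne0$.

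In that case, set $E=\eps u/\abs{\eps u}$, a unit symmetric $2\times2$ matrix. Then $\partial_k\abs{\eps u}=\partial_k\eps u: E$, so
\begin{align*}
\abs{\nabla\abs{\eps u}}^2=\sum_{k=1}^2(\partial_k\eps u:E)^2 .
\end{align*}
Since $\theta<0$, it is enough to show that
\begin{align*}
Q_{\theta,E}(\nabla^2u):=\abs{\nabla\eps u}^2+\tfrac12\abs{\Delta u}^2+\tfrac12\nabla\divergence u\,\Delta u+2\theta\sum_k(\partial_k\eps u:E)^2\ \ge\ c\,\abs{\nabla\eps u}^2
\end{align*}
for every unit symmetric $E$ and every second-derivative array $\nabla^2u$. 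This array consists of six free real numbers $\partial_{ij}u_l$, and by \eqref{feb15} they are in bijection with $\nabla\eps u$. All the terms are invariant under rotations of the coordinates, and $E$ can be diagonalized by a rotation. Hence we may assume $E=\mathrm{diag}(\cos\phi,\sin\phi)$ with $\phi\in[0,2\pi)$.

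Next I would write $Q_{\theta,E}$ explicitly as a quadratic form in the six variables $\partial_{11}u_1,\partial_{12}u_1,\partial_{22}u_1,\partial_{11}u_2,\partial_{12}u_2,\partial_{22}u_2$. I expect it to split into two $3\times3$ blocks, according to the parity of the number of indices equal to $2$, possibly after a further symmetry reduction. The goal is the threshold
\begin{align*}
\theta^*=\inf\big\{\theta\le 0:\ Q_{\theta,E}\ \text{is positive definite for every } \phi\big\}.
\end{align*}
For fixed $\phi$, positivity is equivalent to $-2\theta<\mu(\phi)$, where $\mu(\phi)$ is the smallest generalized eigenvalue of the pair formed by $Q_{0}$ and the rank-two form $\sum_k(\partial_k\eps u:E)^2$. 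Computing $\mu(\phi)$ via the determinant of a small pencil, and then minimizing over $\phi$, should produce $-2\theta^*=\frac{5}{4+\sqrt6}$, that is $\theta^*=-\frac{5}{2(4+\sqrt6)}$. The appearance of $\sqrt6$ suggests that the extremal $\phi$ comes from solving a quadratic equation.

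Finally, once $Q_{\theta,E}$ is positive definite for every $\theta\in[i_a,0]$ and every $E$, compactness of the set $[i_a,0]\times\{\text{unit }E\}$ and continuity of the smallest eigenvalue give a uniform constant $c=c(i_a)>0$. This yields \eqref{snow6} and hence \eqref{pwineq1}. The main obstacle is the explicit eigenvalue computation and its minimization over $\phi$. If the block structure is less clean than expected, I would first treat the extreme cases $\phi=0$ and $\phi=\pi/4$, which are diagonal and trace-free $E$, to locate the worst case. I would then verify directly that the determinant condition at that $\phi$ gives the stated constant, and check that the determinant is monotone in $\phi$ away from it.
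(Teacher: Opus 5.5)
Your reductions are correct and coincide with the paper's. Lemma \ref{lem:pw_identity_lowdim} and Young's inequality reduce \eqref{pwineq1} to \eqref{2026-10}. The cases $\theta\ge 0$ and $\eps u=0$ follow from \eqref{snow7}. After rotating $E=\eps u/\abs{\eps u}$ to $\mathrm{diag}(c,d)$, the form does split along the parity blocks $(\partial_{11}u_1,\partial_{22}u_1,\partial_{12}u_2)$ and $(\partial_{22}u_2,\partial_{11}u_2,\partial_{12}u_1)$. These are exactly the paper's variables $(\alpha,\beta,\gamma)$ and $(\alpha',\beta',\gamma')$, and the $\theta$-term contributes $2\theta(c\alpha+d\gamma)^2$ and $2\theta(d\alpha'+c\gamma')^2$. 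The gap is that the only nontrivial part of the lemma is never established: that this form is positive definite for every $\theta>-\frac{5}{2(4+\sqrt6)}$. You say what the computation \emph{should} produce but do not carry it out. As written, your argument covers only $\theta\ge0$ and $\eps u=0$, and leaves the case $\theta<0$ as an unverified claim.

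Moreover, your predictions about that computation are wrong, so the verification step you describe would fail. Let $M$ be the paper's matrix $Q$ at $s=2$, i.e.\ the even block of $2\abs{\nabla\eps u}^2+\abs{\Delta u}^2+\nabla\divergence u\,\Delta u$, and let $v=(c,0,d)$.
\begin{itemize}
\item The even block of your form is $\frac12(M+4\theta\,v\otimes v)$, which is positive definite if and only if $-4\theta\,v\cdot M^{-1}v<1$.
\item The $(\alpha,\gamma)$-part of $M^{-1}$ is $\frac1{40}\bigl(\begin{smallmatrix}15&5\\5&23\end{smallmatrix}\bigr)$, with largest eigenvalue $\frac{19+\sqrt{41}}{40}$.
\item Hence the sharp threshold of your exact approach is $\theta^*=-\frac{19-\sqrt{41}}{32}\approx-0.394$.
\item It is attained at $\tan\phi\approx 2.08$, or at the complementary angle for the odd block. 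It is not attained at $\phi=0$ or $\phi=\pi/4$; note also that $\phi=\pi/4$ gives $E=I/\sqrt2$, which is not trace-free.
\item No $\phi$ yields the value $-\frac{5}{2(4+\sqrt6)}=-\frac{4-\sqrt6}{4}\approx-0.388$.
\end{itemize}
Since $\theta^*$ is smaller, the lemma would follow a fortiori; your route, once completed, even improves it. But this only holds once the computation is actually done.

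The $\sqrt6$ does not come from optimizing over $\phi$. It comes from the paper's cruder, $\phi$-independent step. The paper uses $\abs{\nabla\abs{\eps u}}\le\abs{\nabla\eps u}$ to replace your rank-two term by $2\theta\abs{\nabla\eps u}^2$. It then checks \eqref{Claim:}, i.e.\ that $Q$ is positive semidefinite for $s=\sqrt6-2$. Since $\det Q=\frac s2(s^2+4s-2)$, this is exactly where $Q$ degenerates, and the condition $1+2\theta>\frac{\sqrt6-2}{2}$ is precisely \eqref{p_bounds_2d}. The quickest way to close your gap is to insert that bound together with this single $3\times3$ check.
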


    \begin{proof}
        Arguing along the same lines as in the  proof of Lemma \ref{lem:pw_ineq_lowdim}, one is reduced to showing that
        \begin{align}\label{2026-10}
            \abs{\nabla \eps u}^2 + \tfrac 12 \abs{\Delta u}^2 + 2\theta \abs{\nabla \abs{\eps u}}^2 + \tfrac 12 \nabla \divergence u \Delta u &\geq c\, \abs{\nabla \eps u}^2
        \end{align}
        for some constant $c>0$ depending only on $i_a$ and $s_a$. Indeed, the inequality \eqref{2026-10} is nothing but \eqref{snow6} with $\delta =0$. Thanks to the inequality 
        $\abs{\Delta u}^2 \leq c(n) \abs{\nabla \eps u}^2$,
        the inequality \eqref{2026-10} implies \eqref{snow6} for some sufficiently small $\delta>0$. 
        \\ If $\theta \geq 0$,  one can argue as in the proof of  
        Lemma of Lemma \ref{lem:pw_ineq_lowdim}. On the other hand, 
        if $\theta <0$, then, since $\abs{\nabla \abs{\eps u}}\leq \abs{\nabla \eps u}$, it suffices to prove that
        \begin{align}
            \label{snow10}
            (1+2\theta) \abs{\nabla \eps u}^2 + \tfrac 12 \abs{\Delta u}^2 +\tfrac 12 \nabla \divergence u \Delta u \geq c \abs{\nabla \eps u}.
        \end{align}
       We claim that
       \begin{align}
            \label{Claim:} (\sqrt{6}-2)\abs{\nabla \eps u}^2+ \abs{\Delta u}^2 +\nabla \divergence u \Delta u \geq 0.
       \end{align}
The inequality \eqref{snow10} will follow from \eqref{Claim:}, owing to the assumption \eqref{p_bounds_2d}.
\\ To prove our claim, let us set
        \begin{align*}
            &\alpha = \partial_{11}u_1, \qquad \beta =\partial_{22}u_1, \qquad\gamma = \partial_{12}u_2, 
            \\
            &\alpha' = \partial_{22}u_2, \qquad\beta' = \partial_{11}u_2, \qquad\gamma' = \partial_{12}u_1.
        \end{align*}
       Thereby,
        \begin{align*}
         \abs{\nabla \eps u}^2 &= \alpha^2 + 2 \left(\frac{\beta'+\gamma'}{2} \right)^2 + \gamma^2 +  \alpha'^2 + 2 \left(\frac{\beta+\gamma}{2} \right)^2 + \gamma'^2
         \\
            \abs{\Delta u}^2 &= (\alpha + \beta)^2 + (\alpha' + \beta')^2,
            \\
            \nabla \divergence u \Delta u&= (\alpha + \gamma)(\alpha + \beta)+(\alpha' + \gamma')(\alpha' + \beta').
        \end{align*}
Set
$$Q=\begin{pmatrix}
                 s+2 & \tfrac 32 & \tfrac 12
                 \\
                 \tfrac 32 & \tfrac s2 + 1 & \tfrac s2 + \tfrac 12
                 \\
                 \tfrac 12 & \tfrac s2 + \tfrac 12 & \tfrac 32 s
             \end{pmatrix},$$
             where  $s=\sqrt{6}-2$. Then, the inequality \eqref{Claim:}
reads:
\begin{align}
    \label{2026-11}
(\alpha, \beta, \gamma) 
             Q (\alpha, \beta, \gamma)^T+ (\alpha', \beta', \gamma')Q (\alpha', \beta', \gamma')^T\geq 0.
\end{align}
This inequality holds for every $(\alpha, \beta, \gamma)$ and $(\alpha', \beta', \gamma')$, since the matrix $Q$ has one eigenvalue equal to zero and two positive eigenvalues, and it is hence positive semidefinite.
\\ The proof is complete.

    \end{proof}

\section{Function spaces}\label{S:functional}
    
Let $B$ be a Young function and let $N\in \mathbb N$. The  Orlicz space $L^B(\Omega)$ is defined as 
\begin{align*}
    L^B (\Omega)= \bigg\{u: \Omega \to \mathbb R^N: u \, \text{is measurable and there exists} \,\, \lambda >0\,\,\text{s.t.} \,\, \int_\Omega B\bigg(\frac{|u|}{\lambda}\bigg)\, d x <\infty\bigg\}.
\end{align*}
It  is a Banach space, equipped with the  Luxemburg norm.
Spaces of matrix-valued functions will also be considered and are defined analogously.
The Orlicz-Sobolev space $W^{1,B}(\Omega)$ is  given by
\begin{align*}
    W^{1,B} (\Omega)= \big\{u \in L^B (\Omega): u \,\,\text{is weakly differentiable and}\,\, \nabla u \in L^B(\Omega)\big\}.
\end{align*}
The local versions $L^B _{\rm loc}(\Omega)$
and $W^{1,B}_{\rm loc} (\Omega)$ of these spaces are defined as usual. Recall that $L^{B_1} _{\rm loc}(\Omega)\subset L^{B_2} _{\rm loc}(\Omega)$ for some Young functions $B_1$ and $B_2$ if and only if $B_1$ dominates $B_2$ near infinity in the sense of Young functions. A parallel characterization holds for the inclusion relation between local Orlicz-Sobolev spaces. 
\\
The space $W^{1,B}_0 (\Omega)$ of those functions in $W^{1,B} (\Omega)$ vanishing on $\partial \Omega$ can be defined as
\begin{multline*}
        W^{1,B}_0 (\Omega)=
        \big\{u \in W^{1,B} (\Omega):  \,\,\text{the extension of $u$ by $0$ to $\mathbb R^n\setminus \Omega$ is weakly differentiable in $\mathbb R^n $}\big\}.    
\end{multline*}
In what follows, functions in the space $W^{1,B}_0 (\Omega)$ will be assumed to be extended as $0$ in $\RRn \setminus \Omega$ without further mention.
\\ Given $u_0\in W^{1,B}(\Omega)$, we also define $W^{1,B}_{u_0}(\Omega)$ as the affine subspace of $W^{1,B}(\Omega)$ of those functions which agree with $u_0$ on $\partial \Omega$ in the sense of Sobolev spaces. Namely, we set
$$W^{1,B}_{u_0}(\Omega)= \{u\in W^{1,B}(\Omega): u-u_0\in W^{1,B}_0 (\Omega)\}.$$
When $B(t)=t^p$ for some $p>1$, the Orlicz and Orlicz-Sobolev spaces defined above agree with standard  Lebesgue and Sobolev type spaces.
\\
A version of the Poincar\'e inequality in Orlicz spaces tells us that, if $\Omega$ has finite Lebesgue measure $|\Omega|$ and the function $B$ is such that $s_B<\infty$, then
\begin{align}
    \label{poincare}
    \int_{\Omega}B(|u|)\, dx \leq c\int_{\Omega}B(|\nabla u|)\, dx
\end{align}
for some $c=c(n,s_B, |\Omega|)$ and every $u \in W^{1,B}_0(\Omega)$. This is a consequence, e.g., of \cite[Lemma 3]{Talenti90}.
\\
The symmetric gradient Orlicz-Sobolev space is defined as
\begin{align}
    \label{sym-orliczsobolev}
    E^{1,B} (\Omega)= \big\{u: \Omega \to \RRn\,: u\in L^B (\Omega)  \,\, \text{and} \,\,\mathcal E u \in L^B(\Omega)\big\}.
\end{align}
Its subspace $ E^{1,B}_0 (\Omega)$ and the space  $ E^{1,B}_{\rm loc} (\Omega)$ can be defined in analogy with $W^{1,B}_0(\Omega)$ and $W^{1,B}_{\rm loc}(\Omega)$, respectively.
 \\ A Korn type inequality in Orlicz spaces asserts that, if $|\Omega|<\infty$   and $1<i_B\leq s_B<\infty$, then 
there exists a constant $c=c(n, i_B, s_B)$  such that
 \begin{align}
    \label{korn-0}\int_{\Omega}B(|\nabla u|)\, dx
    \le
    c\int_{\Omega}B(|\eps u|)\, dx
  \end{align}
  for every $u  \in E^{1,B}_0 (\Omega, \mathbb R^n)$ -- see \cite[Theorem 6.10]{DieningRuzickaSchumacher10} and \cite[Theorem 1]{Fuchs2010}. A variant of the Korn inequality in Orlicz spaces for general Young functions $B$ can be found in \cite{Cianchi14}.
      \\
For $\alpha \in (0,1)$ and $q \in [1,\infty)$, the \Nikolskii~space $\mathcal{N}^\alpha_q(B_R)$ on a ball $B_R$ is defined as
\begin{align*}
\mathcal{N}^\alpha_q(B_R)= \set{ u \in L^q(B_R)\,:\, \norm{u}_{\mathcal{N}^\alpha_q(B_R)}<\infty},
\end{align*}
where
$$\norm{u}_{\mathcal{N}^\alpha_q(B_R)} = \norm{u}_{L^q(B_R)} + \sup_{\abs{h}<R} \norm{\tau_h u}_{L^q(B_{R-\abs{h}})},$$
and $\tau_h u(x)= u(x+h)-u(x)$, see \cite[\S\,18]{BesovIlinNikolskii1978b}.
Note that $\mathcal{N}^\alpha_q(B_R)$ agrees with the Besov space
 $B^{\alpha}_{q, \infty}(B_R)$. Furthermore, as shown in \cite[\S \,26]{BesovIlinNikolskii1978b}, the space $\mathcal{N}^\alpha_q(B_R)$ is compactly embedded into $L^q(B_R)$.
 \\
The following lemma provides us with 
\Nikolskii~regularity of matrix-valued functions $U$ with $\mathcal A(U)$ in a Sobolev space.

\begin{lemma}
    \label{lem:Atou}
    Assume that the function $a\colon (0,\infty) \to (0,\infty)$ is such that $a\in C^1((0,\infty ))$ and $-\frac 12 <i_{a}\leq s_a<\infty$, and let $\mathcal{A}$ be the function associated with $a$ via \eqref{mathcalA}.
    Set
    $\alpha = \min \set{1,\frac{1}{1+s_a}}$. If the function $U\colon B_R \to \RRnn$ is such that $\mathcal{A}(U)\in W^{1,2}(B_R)$ for some ball $B_R\subset \RRn$, then $U\in \mathcal{N}^\alpha_1(B_R)$. 
    Moreover,
\begin{align}
    \label{nikolski-emb}
    \norm{U}_{\mathcal{N}^\alpha_1(B_R)}\leq c\big( \norm{\mathcal A (U)}_{W^{1,2}(B_R)}^2 +1 \big)
\end{align}
for some constant $c$ depending on $n, i_a,s_a, a(1)$, and $R$.
\end{lemma}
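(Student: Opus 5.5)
The plan is to control differences of $U$ by differences of $V:=\mathcal A(U)$ through a pointwise inverse estimate for $\mathcal A$, and then to use $V\in W^{1,2}(B_R)$ to bound the differences of $V$.

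The first step is a pointwise estimate for $\mathcal A$. Since $i_a>-1$, the map $\mathcal A$ is a bijection of $\RRnn$, with $\mathcal A(\xi)=b(|\xi|)\xi/|\xi|$. By the monotonicity and growth estimates collected in Section~\ref{sec:young}, in the spirit of \eqref{nov60'} and \cite{DieningFornasierWank20}, one has
\[
|\mathcal A(\xi)-\mathcal A(\eta)|\eqsim a(|\xi|+|\eta|)\,|\xi-\eta| .
\]
We invert this relation. Set $t=|\xi|+|\eta|$ and $s=|\mathcal A(\xi)|+|\mathcal A(\eta)|\eqsim b(t)$. Then
\[
|\xi-\eta|\lesssim \frac{t}{b(t)}\,|\mathcal A(\xi)-\mathcal A(\eta)|=(b^{-1})'(\cdot)\text{-type factor}\cdot|V(\xi)-V(\eta)|,
\]
that is, $|\xi-\eta|\lesssim (b^{-1})'(s)\,|\mathcal A(\xi)-\mathcal A(\eta)|$ up to constants. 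The inverse $g=b^{-1}$ has indices $1/(1+s_a)\le i_g\le s_g\le 1/(1+i_a)$.

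Two cases follow from the lower index of $g$.
\begin{itemize}
\item If $s_a\le 0$, then $i_g\ge 1$. Hence $g'(s)\lesssim g(1)\max\{1,s^{s_g-1}\}$, and with $s_g-1=-i_a/(1+i_a)<1$ (this uses $i_a>-\tfrac12$) we get $|\xi-\eta|\lesssim (1+s)^{\kappa}|\mathcal A(\xi)-\mathcal A(\eta)|$ for some $\kappa<1$.
\item If $s_a>0$, then $i_g=1/(1+s_a)=\alpha<1$. In this case we use the Hölder-type bound
\[
|\xi-\eta|\lesssim \max\Big\{|\mathcal A(\xi)-\mathcal A(\eta)|^{\alpha},\ (1+s)^{\kappa}|\mathcal A(\xi)-\mathcal A(\eta)|\Big\},
\]
which follows from $g'(s)\,r\lesssim g(r)$ for $r\le s$ together with concavity-type properties of $g$ near $0$.
\end{itemize}
This yields, in both cases,
\[
|\xi-\eta|\le c\big(|\mathcal A(\xi)-\mathcal A(\eta)|^{\alpha}+(1+|\mathcal A(\xi)|+|\mathcal A(\eta)|)^{\kappa}|\mathcal A(\xi)-\mathcal A(\eta)|\big),
\]
with $c$ depending on $i_a$, $s_a$ and $a(1)$.

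Next we apply this bound with $\xi=U(x+h)$ and $\eta=U(x)$, writing $V=\mathcal A(U)$. We integrate over $B_{R-|h|}$ and use $\|\tau_hV\|_{L^2(B_{R-|h|})}\le |h|\,\|\nabla V\|_{L^2(B_R)}$, the standard difference-quotient estimate for $W^{1,2}$. The first term is handled by Hölder's inequality:
\[
\int |\tau_h V|^{\alpha}\lesssim |h|^{\alpha}\|\nabla V\|_2^{\alpha},
\]
with a constant involving $R$. For the second term we apply Cauchy--Schwarz and $\kappa<1$:
\[
\int(1+|V|)^{\kappa}|\tau_hV|\lesssim \|1+|V|\|_{L^2}^{\kappa}\,|h|\,\|\nabla V\|_2\lesssim |h|\,(\|V\|_{W^{1,2}}^2+1).
\]
Since $|h|\le |h|^{\alpha}$ for $|h|<R\le 1$ (rescale otherwise, absorbing $R$ into $c$), we get $\sup_h|h|^{-\alpha}\|\tau_hU\|_{L^1}\lesssim \|V\|_{W^{1,2}}^2+1$.

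The $L^1$ norm of $U$ itself is controlled by $|U|=g(|V|)\lesssim 1+|V|^{s_g}$ with $s_g<2$, because $i_a>-\tfrac12$. Integrating gives $\|U\|_{L^1}\lesssim \|V\|_{L^2}^2+1$, which establishes \eqref{nikolski-emb}. (I read the seminorm in the definition as the usual $|h|^{-\alpha}$-weighted one.)

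The main obstacle is the first step: establishing the inverse difference estimate uniformly across the degenerate regime $|\mathcal A(\xi)-\mathcal A(\eta)|\gg s$ versus $\ll s$, including the case where $\eta$ or $\xi$ is near $0$. The first thing to try is to obtain it directly from the equivalence in \eqref{nov60'} applied to the inverse map $\mathcal A^{-1}(\zeta)=b^{-1}(|\zeta|)\zeta/|\zeta|$. This map is of the same form with function $g(t)/t$, whose indices lie in $(-1,\infty)$, so \cite[Lemma 40, Lemma 41]{DieningFornasierWank20} gives $|\mathcal A^{-1}(\zeta)-\mathcal A^{-1}(\omega)|\eqsim (g(r)/r)|\zeta-\omega|$ with $r=|\zeta|+|\zeta-\omega|$. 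The case split above then follows from the index bounds on $g(r)/r$, namely exponents between $\alpha-1$ and $s_g-1<1$.
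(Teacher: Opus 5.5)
Your proof is correct, but it takes a different route from the paper's.

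\textbf{The paper's route.} The paper never inverts $\mathcal A$. It introduces $E(t)=\int_0^t a^2(\tau)\tau\,d\tau$ and the shifted functions $E_{|U(x)|}$. Their upper index of uniform convexity is $\max\{2s_a+2,2\}=2/\alpha$. Applying Young's inequality to $|h|^{-\alpha}|\tau_hU(x)|\cdot 1$, this index turns the factor $|h|^{-\alpha}$ into $|h|^{-2}$. The shifted-function estimates of \cite{DieningFornasierWank20} then give, in one stroke, the pointwise bound $|h|^{-\alpha}|\tau_hU|\lesssim |h|^{-2}|\tau_h\mathcal A(U)|^2+|\mathcal A(U)|^2+1$. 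The estimates used are $E_{|U|}(t)\le (B'_{|U|}(t))^2$, $B'_{|U|}(|\tau_hU|)\lesssim|\tau_h\mathcal A(U)|$, and $\widetilde{E_{|U|}}(1)\lesssim E(|U|)+\widetilde E(1)$. This bound integrates directly to \eqref{nikolski-emb}, with no case distinction.

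\textbf{Your route.} You derive an explicit modulus of continuity for $\mathcal A^{-1}$, which is the map of the same type generated by $g(t)/t$ with $g=b^{-1}$. Splitting at $|\zeta|+|\omega|=1$ gives $|\xi-\eta|\lesssim|\zeta-\omega|^{\alpha}+(1+|\zeta|+|\omega|)^{\kappa}|\zeta-\omega|$, and you then integrate with H\"older and Cauchy--Schwarz.

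\textbf{Comparison.} Your argument is more elementary: it needs only \eqref{nov60'} and index bounds for $g$. It also makes transparent where $i_a>-\frac12$ enters, namely only through $\kappa=\max\{0,-i_a/(1+i_a)\}<1$ and $s_g<2$. These conditions make the weight and $|U|=g(|\mathcal A(U)|)$ integrable when $\mathcal A(U)\in L^2$. The paper's argument is shorter, but it spends the hypothesis $i_a>-\frac12$ on making $E$ uniformly convex.

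\textbf{One justification to fix.} Your heuristic for the case $s_a>0$ ("$g'(s)r\lesssim g(r)$ for $r\le s$") is valid only when $g(t)/t$ is almost decreasing, i.e.\ $i_a\ge 0$. In the mixed case $i_a<0<s_a$ it fails, so rely on the argument in your last paragraph instead. For $|\zeta|+|\omega|\le 1$, use the lower index $\alpha-1$ of $g(t)/t$ together with $|\zeta-\omega|\le|\zeta|+|\omega|$. For $|\zeta|+|\omega|\ge 1$, use its upper index $\le\kappa$. Also, the weight in the Cauchy--Schwarz step should read $\|(1+|V|+|V(\cdot+h)|)^{\kappa}\|_{L^2}$ rather than $\|1+|V|\|_{L^2}^{\kappa}$; this is harmless since $\kappa\le 1$.
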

\begin{proof} Throughout this proof,
the explicit constants and the constant in the relation $\lq\lq \lesssim"$ depend only on $n, i_a, s_a, a(1)$ and $R$. Let $B$ be the Young function associated with $a$ by the formula \eqref{B}, and let $E$ be the Young function 
 associated with $a^2$ according to the same rule. Namely,
 $$E (t) = \int\nolimits_0^t a^2(\tau)\tau\,d\tau \qquad \text{for $t\geq 0$}.$$ 
 Since $i_{a^2}=2i_a > -1$, one has that $E$ is a uniformly convex Young function in the sense of \cite[Appendix B]{DieningFornasierWank20}, with lower and upper index of uniform convexity $q^-=2i_a +2$ and $q^+=2s_a +2$, respectively. For each $x\in B_R$, define the  \emph{shifted Young functions} $B_{\abs{U(x)}}$ and $E_{\abs{U(x)}}$ as
    \begin{align*}
        B_{\abs{U(x)}}(t)=\int\nolimits_0^t a(\max\set{\abs{U(x)},\tau})\tau \,d\tau\quad\text{and}\quad E_{\abs{U(x)}}(t)=\int\nolimits_0^t a(\max\set{\abs{U(x)},\tau})^2\tau \,d\tau \quad \text{for $t \geq 0$,}    
    \end{align*}
    in the spirit of  \cite[(B.4)]{DieningFornasierWank20}. The indices of uniform convexity of $E_{\abs{U(x)}}$ agree with $\min \set{q^-,2}$ and $\max \set{q^+,2}$.
    Let $x \in B_R$ and $h\in \RRn$ be such that $x+h\in B_R$.  Since $\alpha\max \set{q^+,2}=2$, from 
    Young's inequality one can infer  that
    \begin{align*}
        \abs{h}^{-\alpha}\abs{\tau _h U(x)} &\le E _{\abs{U(x)}}(\abs{h}^{-\alpha} \abs{\tau _h U(x)}) +\widetilde{E_{\abs{U(x)}} } (1) \lesssim \abs{h}^{-2}E _{\abs{U(x)}}(\abs{\tau _h U(x)}) +\widetilde{E_{\abs{U(x)}} } (1).
    \end{align*}
     We have that $\widetilde{E_{\abs{U(x)}} } (1) \lesssim E (\abs{U(x)})+\widetilde{E}(1)\leq \abs{\mathcal{A}(U(x))}^2$, where the first inequality holds by  \cite[Corollary 44]{DieningFornasierWank20}. Moreover, $E_{\abs{U(x)}} (t)\leq E_{\abs{U(x)}}' (t)t = B_{\abs{U(x)}}'(t)^2$ for $t \geq 0$, and, by \cite[Lemma 41]{DieningFornasierWank20},  $B_{\abs{U(x)}}'(\abs{\tau_h U(x)})\lesssim \abs{\tau_h\mathcal{A}(U(x))}$. Thus,
    \begin{align}
        \label{nikolskii1}
        \abs{h}^{-\alpha}\abs{\tau _h U(x)} &\lesssim \abs{h}^{-2} \abs{\tau_h \mathcal A(U(x))} ^2 + \abs{\mathcal{A}(U(x))}^2 + \widetilde{E}(1).
    \end{align}
   Furthermore,
    \begin{align}\label{nikolskii2}
\widetilde{E}(1) \lesssim \widetilde{E}(a(1)^2)/a(1)^2\leq \widetilde{E}'(a(1)^2)= \widetilde{E}'(E'(1))=1
    \end{align}
    Thanks to the assumption $\mathcal A (G)\in W^{1,2}(B_R)$, combining Equations \eqref{nikolskii1} and \eqref{nikolskii2} implies that $U \in \mathcal{N}^\alpha_1(B_R)$ and that the inequality \eqref{nikolski-emb} holds.
\end{proof}

\section{Solutions to symmetric gradient systems}\label{S:solutions}

Let $p>1$. Assume that the function $a: (0, \infty) \to (0, \infty)$ is such that $\in C^1(0, \infty)$ and $s_a\leq p-2$.  Let $\mathcal A$ be the function associated with $a$ as in \eqref{mathcalA}. Assume that
$f \in L^1_{\rm loc}(\Omega) \cap (W^{1,p}_0(\Omega'))^*$ for every open set $\Omega ' \subset \subset \Omega$.
A function $u \in W^{1,p}_{\rm loc}(\Omega)$ is called a local weak solution to the system 
\begin{equation}\label{localeq}
- {\rm {  div}}( \mathcal A (\eps u) ) = { f}  \quad {\rm in}\,\,\, \Omega\,,
\end{equation}
 if
\begin{equation}\label{weaklocl}
\int _{\Omega'} \mathcal A(\eps u) \cdot \eps  \varphi  \,dx = \int _{\Omega '} {f}\cdot \varphi  \,dx
\end{equation}
for every open set $\Omega ' \subset \subset \Omega$, and every function $\varphi \in W^{1,p}_0(\Omega')$.

When  $f\in L^2_{\rm loc}(\Omega)$, standard weak solutions to the system \eqref{eq:system} and, more generally, to \eqref{weaklocl}, are therefore well defined only if $p\geq \frac{2n}{n+2}$.  Solutions in a suitable extended sense have thus to be
considered. For scalar $p$-Laplacian type equations, various (a posteriori equivalent) definitions of solutions, which also suit the case where $f\in L^1_{\rm loc}(\Omega)$, or even where $f$ is just a Radon measure, have been introduced  in the literature. They include   
entropy solutions \cite{BBGGPV95}, renormalized solutions \cite{MasoMuratOrsinaPrignet1999},
 and SOLA \cite{Dallaglio96}. These solutions need
not even be weakly differentiable. 

The case of systems is  subtler and has been less investigated. We adopt a notion of solution modeled on the approach of \cite{DolzHungerMueller97}, which well fits our purposes. Roughly speaking, the relevant solutions  are merely approximately differentiable, and are pointwise limits of solutions to approximating problems with smooth right-hand sides.

Recall that a measurable function $u  : \Omega \to \mathbb R^n$ is said to be approximately differentiable at $x \in \Omega$ if there exists a matrix ${\rm ap} \nabla u(x)  \in \mathbb R^{n\times n}$ such that, for every $\varepsilon >0$,
$$\lim_{r \to 0^+} \frac {\big|\{y\in B_r(x): \frac  1r |u(y)-u(x)- {\rm ap} \nabla u(x)  (y-x)|>\varepsilon\}\big|}{r^n} =0.$$
If $u$ is approximately differentiable at almost every point in $\Omega$, then the function ${\rm ap} \nabla u : \Omega \to \mathbb R^{n\times n}$ is measurable. We also set
$${\rm ap}\eps u= \tfrac 12({\rm ap} \nabla u+({\rm ap} \nabla u)^T).$$
If $u\in W^{1,1}_{\loc}(\Omega)$, then $\mathrm{ap} \nabla u = \nabla u$ a.e. in $\Omega$. 

Assume that $f \in L^2_{\rm loc}(\Omega)$. 
An approximately differentiable function $u : \Omega \to \RRn$ is called a local  approximable  solution to the system
\eqref{weaklocl}
if $|{\rm ap}\eps u| \in L^{p-1}_{\rm loc}(\Omega)$,  
and, for every open set $\Omega' \subset \subset \Omega$,
there exists a sequence $\{f_k\}\subset  C^\infty (\Omega)$, with
$f_k \to f$   in $L^2_{\rm loc} (\Omega)$,
and a corresponding sequence of local weak solutions $\{u_k\}$ to the system
\begin{equation}\label{localeqk}
- {\rm {  div}}( \mathcal A(\eps u_k)) = {f}_k  \quad {\rm in}\,\,\, \Omega'\,,
\end{equation}
such that
\begin{equation}\label{approxuloc}
u_k \to u \quad \hbox{and} \quad \eps u_k \to {\rm ap} \eps u \quad \hbox{a.e. in $\Omega'$,}
\end{equation}
and
\begin{equation}\label{approxaloc}
\lim _{k \to \infty} \int_{\Omega'}|\mathcal{A}(\eps u_k)| \, dx =\int_{\Omega'}|\mathcal{A}(\eps u)|\, dx.
\end{equation}
 In what follows, we shall denote ${\rm ap}\eps u$ simply by $\eps u$.

As already hinted in Section \ref{S:intro}, the notion of approximable solution to the system \eqref{eq:system} with $f\in L^2_{\rm loc}(\Omega)$ is a generalization of that of weak solution. 
This claim is the subject of the following result. We only provide a sketch of its proof, which rests upon  a quite standard argument.
 \begin{proposition}
     \label{weak-approx}
     Assume that $p\geq \frac{2n}{n+2}$ and $f\in L^2_{\rm loc}(\Omega)$. If $u$ is a weak solution to the system \eqref{eq:system}, then it  is also an approximable solution.
 \end{proposition}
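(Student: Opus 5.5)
Fix an open set $\Omega'\subset\subset\Omega$; we have to produce $f_k\in C^\infty(\Omega)$ with $f_k\to f$ in $L^2_{\rm loc}(\Omega)$ and local weak solutions $u_k$ of \eqref{localeqk} in $\Omega'$ satisfying \eqref{approxuloc} and \eqref{approxaloc}. The plan is to solve Dirichlet problems with the boundary datum of $u$ itself. Choose a smooth open set $\Omega''$ with $\Omega'\subset\subset\Omega''\subset\subset\Omega$; it suffices to work in $\Omega''$. Let $f_k$ be mollifications of $f$ (times a cutoff equal to $1$ on $\Omega''$), so $f_k\in C^\infty(\Omega)$ and $f_k\to f$ in $L^2_{\rm loc}(\Omega)$. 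Let $u_k$ be the minimizer of $\mathcal J_p$ in \eqref{eq:energy} (with $f_k$, on $\Omega''$) over $W^{1,p}_u(\Omega'')$. Existence and uniqueness follow from strict convexity and coercivity, using the Korn inequality \eqref{korn-0} (with $B=B_p$) and the Poincar\'e inequality \eqref{poincare} on $u_k-u\in W^{1,p}_0(\Omega'')$. Since $p\ge \frac{2n}{n+2}$, the Sobolev embedding gives $W^{1,p}_0(\Omega'')\hookrightarrow L^{2}(\Omega'')$ when $p<n$, and trivially otherwise. Hence $f_k\to f$ in $L^2$ implies $f_k\to f$ in $(W^{1,p}_0(\Omega''))^*$. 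The function $u_k$ is a local weak solution in $\Omega''$, hence in $\Omega'$.

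The core step is the convergence $\eps u_k\to\eps u$ in $L^p(\Omega'')$. Test the equations for $u_k$ and $u$ with $\varphi=u_k-u\in W^{1,p}_0(\Omega'')$ and subtract:
$$\int_{\Omega''}\big(\mathcal A_p(\eps u_k)-\mathcal A_p(\eps u)\big)\cdot(\eps u_k-\eps u)\,dx=\int_{\Omega''}(f_k-f)\cdot(u_k-u)\,dx .$$
The standard monotonicity of $\mathcal A_p$ bounds the left side from below by $c\int|\mathcal V_p(\eps u_k)-\mathcal V_p(\eps u)|^2dx$. The right side is at most $\|f_k-f\|_{(W^{1,p}_0)^*}\|\nabla(u_k-u)\|_{L^p}$, and by Korn this is $\lesssim \|f_k-f\|_{*}\|\eps(u_k-u)\|_{L^p}$.

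For $p\ge2$, the lower bound dominates $\|\eps(u_k-u)\|_{L^p}^p$, so one obtains $\|\eps(u_k-u)\|_{L^p}^{p-1}\lesssim\|f_k-f\|_*\to0$. For $p<2$, the usual estimate
$$\int|\eps(u_k-u)|^p\le\Big(\int|\mathcal V_p(\eps u_k)-\mathcal V_p(\eps u)|^2\Big)^{p/2}\Big(\int(|\eps u_k|+|\eps u|)^p\Big)^{1-p/2},$$
combined with a uniform energy bound on $\|\eps u_k\|_{L^p}$ (obtained by testing with $u_k-u$ once more), again gives convergence. This case, the bookkeeping for $p<2$, is the only point requiring care, and it is the step I expect to be the main obstacle.

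Korn and Poincar\'e then give $u_k\to u$ in $W^{1,p}(\Omega'')$. Passing to a subsequence, $u_k\to u$ and $\eps u_k\to\eps u$ a.e., which is \eqref{approxuloc}; here $\mathrm{ap}\,\eps u=\eps u$ a.e. since $u\in W^{1,1}_{\rm loc}$. Finally, $|\mathcal A_p(\eps u_k)|=|\eps u_k|^{p-1}$ converges in $L^{p'}(\Omega'')$, hence in $L^1(\Omega')$, because $\eps u_k\to\eps u$ in $L^p$ and the Nemytskii operator $\xi\mapsto|\xi|^{p-2}\xi$ is continuous from $L^p$ to $L^{p'}$. This yields \eqref{approxaloc}. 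The integrability condition $|\eps u|\in L^{p-1}_{\rm loc}$ is immediate from $u\in W^{1,p}_{\rm loc}$.
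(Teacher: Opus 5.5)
Your proposal is correct and follows essentially the paper's route: Dirichlet problems with boundary datum $u$ and smooth data $f_k$, and the Sobolev embedding to place $L^2$ inside $(W^{1,p}_0)^*$. You then test both equations with $u_k-u$ and use the monotonicity of $\mathcal A_p$ to get $\eps u_k\to\eps u$ in $L^p$; for $p<2$ this needs the uniform energy bound and H\"older's inequality, as the paper does in Step 3 of the proof of Theorem~\ref{thm:local}. The properties \eqref{approxuloc}--\eqref{approxaloc} then follow as you describe.

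One small slip: multiplying by a fixed cutoff $\chi$ equal to $1$ on $\Omega''$ and then mollifying gives $f_k\to\chi f$, not $f$, in $L^2_{\rm loc}(\Omega)$. Instead, take any sequence in $C^\infty(\Omega)$ converging to $f$ in $L^2_{\rm loc}(\Omega)$, for example mollifications of $f$ restricted to an exhausting sequence of compact sets with shrinking radii. This is what the paper does, and your argument only uses convergence in $L^2(\Omega'')$.
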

 \begin{proof}[Proof, sketched]
    By the Sobolev embedding theorem, the assumption on $p$ ensures that 
    $W^{1,p}_{\rm loc}(\Omega) \subset L^2_{\rm loc}(\Omega)$. It also ensures that, given an open set $\Omega' \subset \subset \Omega$, one has
$L^2(\Omega') \subset (W^{1,p}_0(\Omega'))^*$, with a continuous embedding. 
    Consider any sequence $\{f_k\}\subset C^\infty(\Omega)$ such that $f_k \to f$ in $L^2_{\rm loc}(\Omega)$. Let $\{u_k\}$ be the sequence of weak solutions to the Dirichlet problems
\begin{align}
    		\label{weak-approx1}
    		\begin{alignedat}{2}
    			-\divergence (\mathcal A_p(\eps u_k)) &= f_k &\qquad&\text{in $\Omega'$}
    			\\
    			u_{k}&= u & \qquad&\text{on $\partial \Omega'$.}
                \end{alignedat}
    	\end{align}
        Since $u\in W^{1,p}(\Omega)$, we have that ${\rm ap}\eps u= \eps u$. Thus, it suffices
         to show that, up to subsequences, the sequence $\{u_k\}$ fulfills the properties \eqref{approxuloc} and \eqref{approxaloc}   with ${\rm ap}\eps u$ replaced by $\eps u$. Since $u_k-u\in W^{1,p}_0(\Omega)$, these properties will in turn follow if we prove that
         \begin{align}
             \label{weak-approx9}
             \eps u_k \to \eps u \qquad \text{in $L^p(\Omega')$.}
         \end{align}
        From an use of the test function $u_k-u$ in the weak formulation of the problem \eqref{weak-approx1} and the Sobolev-Poincar\'e inequality  for symmetric gradients one can deduce that
    \begin{align}
            \label{weak-approx3}
      \|\eps u_k\|_{L^{p}(\Omega')}^p &    \leq c
        \|\eps u\|_{L^{p}(\Omega')}^p+\|f\|_{L^{2}(\Omega')}^{p'}
    \end{align}
    for some constant $c$ independent of $k$.
         Using the test function $u_k-u\in W^{1,p}_0(\Omega)$ in the
        problems \eqref{weak-approx1} and \eqref{eq:system},
subtracting the resultant equations, and exploiting  the Sobolev-\Poincare~inequality  for symmetric gradients again yield:
\begin{align}
    \label{weak-approx2}
    \begin{aligned}
         \int_{\Omega'}(\mathcal A_p(\eps u_k)- \mathcal A_p(\eps u))\cdot(\eps u_k - \eps u)\, dx &= \int_{\Omega'}(f_k-f)\cdot(u_k-u)\, dx
 \\
 &\leq c \norm{f_k-f}_{L^2(\Omega')} \big( \norm{\eps u_k}_{L^p(\Omega')} + \norm{\eps u}_{L^p(\Omega')}\big)
    \end{aligned}
\end{align}
for some constant $c$ independent of $k$.
 By our assumption on $f$ we have $f_k \to f$ in $L^2(\Omega')$. Therefore, owing to \eqref{weak-approx3}, the right-hand side of the inequality \eqref{weak-approx2} converges to zero.
Since $\mathcal{A}_p$ is a monotone operator, the convergence \eqref{weak-approx9} follows from \eqref{weak-approx2}.
 \end{proof}

    \section{Nonlinear systems with quadratic growth}
    \label{sec:quadratic}

The pointwise inequalities established in Lemmas \ref{lem:pw_ineq_lowdim} -- \ref{lem:pw_ineq_2d} cannot be directly applied to local solutions to the system \eqref{eq:system}. The reason is twofold. First, the relevant solutions need not belong to the space $C^3(\Omega)$. Second, the assumption \eqref{dec1} is not fulfilled by the function $a_p(t)=t^{p-2}$, if $1<p<2$. To circumvent these obstacles, the solutions  to the system \eqref{eq:system} will be locally approximated by families of solutions to systems with quadratic growth. In the \emph{approximating systems}, the function $a_p$ is replaced with $a_{p,\varepsilon}$, the function defined as in \eqref{aeps} for $\varepsilon \in (0,1)$. 

A critical step in this approximation argument is accomplished in Theorem \ref{thm:quadratic}, the principal result of this section. Theorem \ref{thm:quadratic} deals with a general class of systems with \emph{quadratic growth}, namely systems of the form
    \begin{align}
        \label{system_a}
        -\divergence (\mathcal A (\eps u)) = f \quad \text{in $\Omega$},
    \end{align}
    where $\mathcal A$ is given by \eqref{mathcalA}, with $a:(0,\infty)\to (0, \infty)$ such that the condition \eqref{dec1}  is satisfied and 
    \begin{align}
        \label{assumption_a_nice2}
        \lambda  \leq a(t) \leq \lambda^{-1}  \quad \text{for  $t>0$,}
    \end{align}
and for some  $\lambda \in (0,1)$. Under proper bounds on the indices $i_a$ and $s_a$, 
 an estimate analogous to \eqref{main1}, with $\mathcal A_p$ replaced with $\mathcal A$, is derived for local weak solutions to the system  \eqref{system_a}. The constant appearing in this bound depends on $n$, $i_a$, and $s_a$, but is independent of $\lambda$. This is a key feature in view of its application with $a=a_{p,\varepsilon}$. Indeed,  with such a choice of the function $a$, Equation \eqref{assumption_a_nice2}
holds with $\lambda=\varepsilon$, whereas $i_a$ and $s_a$ are uniformly bounded independently of $\varepsilon$.

In the statement, and in what follows, the notation $\fint$ stands for an averaged integral.

    \begin{theorem}[Systems with quadratic growth]\label{thm:quadratic}
 Let $a\colon (0,\infty ) \to (0,\infty)$ be such that 
the conditions \eqref{dec1} and \eqref{assumption_a_nice2} are fulfilled.
Suppose that one of the following assumptions is holds:
\\ (i) $n=2$ \, and \, $ -\frac{5}{2(4+\sqrt{6})} <i_a \leq s_a <\infty$;
\\ (ii) $3\leq n \leq 7$\, and \, $- \frac{8-n}{16}<i_a \leq s_a <\infty$;
\\ (iii) $n\geq 3$\,  and\, $-\frac{1}{\sqrt{n+1}+1}< i_a \leq s_a< \frac{1}{\sqrt{n+1}-1}$.

 Let $f\in L^2_{\rm loc}(\Omega)$ and let $u$ be a local weak solution to the system \eqref{system_a}. Then, 
\begin{align}
    \label{2026-26}
    \mathcal A(\eps u) \in W^{1,2}_{\rm loc}(\Omega)
\end{align}
        and
    \begin{align}\label{jan147}
        \fint_{B_R}  \abs{ \mathcal A(\eps u)}^2+ R^2\abs{\nabla ( \mathcal A(\eps u))}^2\,dx
            \leq c R^2\fint_{B_{2R}} \abs{f}^2\,dx +c\Bigg(\fint_{B_{2R}} | \mathcal A(\eps u)|\,dx\Bigg)^2 
        \end{align}
         for some constant 
    $c=c(n, i_a, s_a)$ and 
        for every ball $B_R$ such that $B_{2R}\subset\subset \Omega$.
    \end{theorem}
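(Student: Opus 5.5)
The plan is to integrate the pointwise inequalities of Lemmas \ref{lem:pw_ineq_lowdim}, \ref{lem:pw_ineq_alldim} and \ref{lem:pw_ineq_2d} against a cut-off function. This requires first regularizing both the datum and the solution so that those lemmas apply. Afterwards one passes to the limit using estimates that depend only on $n$, $i_a$ and $s_a$.

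\textbf{Step 1: regularization.} Fix $B_{2R}\subset\subset\Omega$. Take $f_k\in C^\infty$ with $f_k\to f$ in $L^2(B_{2R})$. Let $u_k$ solve the Dirichlet problem $-\divergence(\mathcal A(\eps u_k))=f_k$ in $B_{2R}$ with $u_k=u$ on $\partial B_{2R}$. By \eqref{assumption_a_nice2} and \eqref{nov60'}, the map $\mathcal A$ is strongly monotone and Lipschitz with constants depending on $\lambda$. Hence, by Korn's inequality \eqref{korn-0}, the solution $u_k$ exists, is unique, and satisfies $\eps u_k\to\eps u$ in $L^2(B_{2R})$.

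Since $\mathcal A$ has uniformly elliptic, bounded, $C^0$ derivative (by \eqref{dec1} and \eqref{2026-5}), the difference-quotient method yields $u_k\in W^{2,2}_{\rm loc}$. However, $u_k$ need not be $C^3$. I would therefore mollify the nonlinearity as well: replace $a$ by a smooth $a_\delta$ with the same index bounds (up to $o(1)$) and the same $\lambda$, for instance by mollifying $\log a$ in the variable $\log t$. Then Schauder/Giaquinta--Modica theory for smooth, uniformly elliptic, quadratic-growth systems with smooth data gives $u_k\in C^{3}$, at least locally. This part is technical, and the bounds obtained here may depend on $\lambda$; the final estimate will not.

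\textbf{Step 2: integration of the pointwise inequality.} Let $\eta\in C^\infty_c(B_{2R})$ be a cut-off with $\eta=1$ on $B_R$ and $|\nabla\eta|\lesssim R^{-1}$. In case (i) use Lemma \ref{lem:pw_ineq_2d}, in case (ii) Lemma \ref{lem:pw_ineq_lowdim}, and in case (iii) Lemma \ref{lem:pw_ineq_alldim}; the index conditions match. Multiply the inequality by $\eta^2$ and integrate. This gives
\begin{align*}
c_1\int \eta^2 a^2|\nabla\eps u_k|^2\,dx \le \int\eta^2|f_k|^2\,dx + c\int \eta|\nabla\eta|\, a^2|\eps u_k|\,|\nabla^2 u_k|\,dx ,
\end{align*}
where the last term comes from integrating the divergence term by parts. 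Each entry of the vector field inside the divergence is bounded by $|\eps u_k||\nabla^2u_k|$, since $|\divergence u|\le\sqrt n|\eps u|$.

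By Young's inequality and \eqref{feb15}, the last term is at most $\tfrac{c_1}{2}\int\eta^2a^2|\nabla\eps u_k|^2\,dx+cR^{-2}\int_{\operatorname{supp}\eta}|\mathcal A(\eps u_k)|^2\,dx$. Lemma \ref{lemma:equiv} then gives the Caccioppoli-type bound
\begin{align*}
\int_{B_R}|\nabla\mathcal A(\eps u_k)|^2\,dx\le c\int_{B_{2R}}|f_k|^2\,dx+cR^{-2}\int_{B_{2R}}|\mathcal A(\eps u_k)|^2\,dx ,
\end{align*}
valid for all pairs of concentric balls, with $c=c(n,i_a,s_a)$.

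\textbf{Step 3: reduction to the $L^1$ norm.} The main obstacle is replacing the $L^2$ norm of $\mathcal A(\eps u_k)$ on the right-hand side by the $L^1$ average, with a constant independent of $\lambda$. I would apply the Caccioppoli bound to $\mathcal A(\eps u_k)-Q$ with $Q$ constant. The integrand is unchanged except for the boundary term; there one must subtract a matching affine/constant piece. To arrange this, use $u_k-\ell$ with $\ell$ affine and $\eps\ell=\mathcal A^{-1}(Q)$; this is legitimate because the identities of Lemmas \ref{lem:pw_identity_lowdim} and \ref{lem:pw_identity_alldim} depend on $u$ only through second derivatives and $\eps u$. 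The difficulty is that the pointwise inequality is centred at $\eps u=0$, not at $\mathcal A^{-1}(Q)$.

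If this centring fails, I would instead use the Sobolev--Poincaré inequality in the form
\begin{align*}
\|\mathcal A-\langle\mathcal A\rangle\|_{L^2(B_r)}\le c\,\|\nabla\mathcal A\|_{L^{2_*}(B_r)} ,
\end{align*}
combined with interpolation between $L^1$ and $L^2$ and a standard iteration lemma on radii $R\le r<s\le2R$. The goal is the reverse-Hölder-type estimate $\bigl(\fint_{B_R}|\mathcal A|^2\,dx\bigr)^{1/2}\lesssim R\bigl(\fint_{B_{2R}}|f|^2\,dx\bigr)^{1/2}+\fint_{B_{2R}}|\mathcal A|\,dx$, which gives \eqref{jan147} for $u_k$.

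\textbf{Step 4: passage to the limit.} Finally, let $k\to\infty$ and remove the mollification of $a$. The convergence $\eps u_k\to\eps u$ in $L^2$, together with the continuity of $\mathcal A$, gives $\mathcal A(\eps u_k)\to\mathcal A(\eps u)$ in $L^2$. Weak compactness in $W^{1,2}$ and lower semicontinuity then yield \eqref{2026-26} and \eqref{jan147}.
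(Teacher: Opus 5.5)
Step~1 has a genuine gap. For $n\ge 3$ no available regularity theory makes $u_k$ of class $C^3$, not even locally, and mollifying $a$ does not change this. Schauder theory can only bootstrap once $\eps u_k$ is known to be H\"older continuous, and that is exactly what is missing. Giaquinta--Modica theory gives only partial regularity, i.e.\ smoothness off a closed null set. For operators depending on $\eps u$, full $C^{1,\alpha}$ regularity of solutions to smooth, uniformly elliptic, quadratic-growth systems is not known in general: the Uhlenbeck argument fails because $\abs{\eps u}$ is not a subsolution of a scalar equation, as the paper points out. So you cannot apply the pointwise inequalities to the solutions $u_k$ as planned.

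The missing observation is that you never need a smooth \emph{solution}. The pointwise inequalities hold for every $v\in C^3$. Keep $\abs{\divergence(\mathcal A(\eps v))}^2$ on the right-hand side instead of $\abs{f_k}^2$, and derive the integrated estimate for arbitrary $C^3$ maps $v$; this is Lemma~\ref{lem:local_estimate_v}. That estimate involves only $\eps v$ and $\nabla^2 v$. By \eqref{dec1} and \eqref{assumption_a_nice2}, $\mathcal A$ is a $C^1$ map with bounded derivative. Hence if $v_j\to v$ in $W^{2,2}_{\rm loc}$, with a.e.\ convergence of second derivatives, dominated convergence gives $\nabla(\mathcal A(\eps v_j))\to\nabla(\mathcal A(\eps v))$ in $L^2_{\rm loc}$. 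The estimate therefore extends to every $v\in W^{2,2}_{\rm loc}$ (Lemma~\ref{lem:local_estimate_quadratic}). Only then do you insert the solution, which lies in $W^{2,2}_{\rm loc}$ by difference quotients and satisfies $\divergence(\mathcal A(\eps u))=-f$. The parameter $\lambda$ enters only qualitatively in this density step, so the constant stays $c(n,i_a,s_a)$, and no smoothing of $a$ is needed.

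On Step~3, you are right that the affine-shift idea fails: $\mathcal A(\eps u-\eps\ell)\neq\mathcal A(\eps u)-Q$. The mean-value Sobolev--Poincar\'e inequality alone does not close the argument either. Your Caccioppoli bound contains $\int_{B_\tau}\abs{\mathcal A(\eps u)}^2$, not $\int_{B_\tau}\abs{\mathcal A(\eps u)-Q}^2$, so you are left with the non-small factor $\tau^2/(\tau-\sigma)^2$ in front of $\int_{B_\tau}\abs{\nabla(\mathcal A(\eps u))}^2$.

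What does work is an Ehrling-type interpolation inequality, $\int_{B_\tau}\abs{g}^2\le\delta\tau^2\int_{B_\tau}\abs{\nabla g}^2+C\delta^{-\kappa}\tau^{-n}\big(\int_{B_\tau}\abs{g}\big)^2$, with $\delta$ comparable to $((\tau-\sigma)/\tau)^2$, followed by the iteration lemma. This is a valid alternative to the paper's route. The paper instead applies a Poincar\'e-type inequality on the annulus $B_\tau\setminus B_\sigma$, where $\nabla\eta$ is supported. There the factor $(\tau-\sigma)^2$ cancels exactly, and hole-filling plus iteration finish the argument.
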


Our proof of Theorem \ref{thm:quadratic}
relies on an integral inequality  deduced from the pointwise estimates exhibited in the previous section.
As a 
 first step, we establish the inequality in question for sufficiently smooth functions. This is the content of Lemma \ref{lem:local_estimate_v}. In the subsequent Lemma \ref{lem:local_estimate_quadratic}, the same integral inequality is extended to functions endowed with locally square integrable second-order weak derivatives. This is enough for our purposes, inasmuch as local weak solutions to  systems with quadratic growth like \eqref{system_a} are known to enjoy this regularity. 
Let us notice that the functions $u$ in Lemmas \ref{lem:local_estimate_v} and \ref{lem:local_estimate_quadratic} are not requested to be solutions of any system.

    \begin{lemma}
        \label{lem:local_estimate_v}
        Assume that the function $a\colon(0,\infty) \to (0,\infty)$ satisfies the same hypotheses as in Theorem \ref{thm:quadratic}. Let $\mathcal A$ be the function defined by \eqref{mathcalA}.
       Let  $u\in C^3(\Omega)$. Then,  
\begin{multline}\label{eq:loca_lem}
            \fint_{B_R}  \abs{ \mathcal A(\eps u)}^2 + R^2\abs{\nabla( \mathcal A(\eps u))}^2\,dx
                \leq c  R^2\fint_{B_{2R}} \abs{\divergence (\mathcal A(\eps u))}^2\,dx +c\Bigg(\fint_{B_{2R}} |\mathcal A(\eps u)|\,dx\Bigg)^2
        \end{multline}
     for some constant 
    $c=c(n, i_a, s_a)$ and for every ball $B_R$ such that $B_{2R}\subset \subset  \Omega$.
    \end{lemma}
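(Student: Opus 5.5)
Multiply the pointwise inequality \eqref{pwineq1} (cases (i), (ii), via Lemmas \ref{lem:pw_ineq_2d} and \ref{lem:pw_ineq_lowdim}) or \eqref{pwineq2} (case (iii), via Lemma \ref{lem:pw_ineq_alldim}) by $\eta^2$, where $\eta\in C^\infty_c(B_{2R})$ is a cutoff with $\eta=1$ on $B_R$, $0\le\eta\le1$ and $|\nabla\eta|\lesssim R^{-1}$, and integrate over $B_{2R}$. The divergence term is moved onto the cutoff by integration by parts. Writing $F$ for the vector field inside the divergence, this gives
\[
c_1\int \eta^2 a^2(|\eps u|)|\nabla\eps u|^2\,dx\le \int\eta^2|\divergence(\mathcal A(\eps u))|^2\,dx + 2c_2\int \eta|\nabla\eta|\,|F|\,dx .
\]
In every case $|F|\lesssim a^2(|\eps u|)|\nabla^2 u|\,|\eps u|$. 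This holds because $|\divergence u|\le\sqrt n|\eps u|$ and $|\nabla|\eps u|^2|\le 2|\eps u||\nabla \eps u|$. By Lemma \ref{lemma:equiv}, and by \eqref{feb15} comparing $|\nabla^2u|$ with $|\nabla\eps u|$, we get $|F|\lesssim a(|\eps u|)|\nabla\eps u|\cdot|\mathcal A(\eps u)|$. Young's inequality then absorbs half of the cutoff term into the left side, and the first equivalence in \eqref{nov50} turns the left side into $|\nabla(\mathcal A(\eps u))|^2$. The result is a Caccioppoli-type inequality
\[
\int_{B_{2R}}\eta^2|\nabla(\mathcal A(\eps u))|^2\,dx\le c\int_{B_{2R}}\eta^2|\divergence(\mathcal A(\eps u))|^2\,dx+\frac{c}{R^2}\int_{\operatorname{supp}\eta}|\mathcal A(\eps u)|^2\,dx,
\]
with $c=c(n,i_a,s_a)$, independent of $\lambda$.

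It remains to replace the $L^2$ norm of $G:=\mathcal A(\eps u)$ on the right by $(\fint|G|)^2$ and to bound $\fint_{B_R}|G|^2$ on the left. For the $L^2$ term on the left, apply the Poincaré inequality $\fint_{B_R}|G-\langle G\rangle|^2\lesssim R^2\fint_{B_R}|\nabla G|^2$ together with $|\langle G\rangle_{B_R}|^2\lesssim(\fint_{B_{2R}}|G|)^2$. For the right side, use the same inequality on all pairs of concentric balls $B_r\subset B_s$ with $R\le r<s\le 2R$, with cutoffs satisfying $|\nabla \eta|\lesssim (s-r)^{-1}$. Combining it with the Sobolev–Poincaré interpolation
\[
\|G\|_{L^2(B_s)}\le \delta (s-r)\|\nabla G\|_{L^2(B_s)}+ c_\delta (s-r)^{-n/2}\|G\|_{L^1(B_s)},
\]
valid since $W^{1,1}\hookrightarrow L^{n/(n-1)}$ (or by Gagliardo–Nirenberg on a ball), gives
\[
\phi(r)\le \tfrac12\phi(s)+c\int_{B_{2R}}|\divergence G|^2\,dx+c(s-r)^{-n-2}\|G\|_{L^1(B_{2R})}^2, \qquad \phi(r):=\int_{B_r}|\nabla G|^2\,dx .
\]
The standard iteration lemma (Giaquinta) removes $\tfrac12\phi(s)$. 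Dividing by $|B_R|$ then yields \eqref{eq:loca_lem}.

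The main obstacle is the reduction from $L^2$ to $L^1$ of $G$. The cutoff term can only be absorbed against $\int |G||\nabla G|$, so the interpolation must be uniform over the family of balls, and the iteration has to be set up carefully. A concern is whether $\phi(2R)<\infty$; this holds since $u\in C^3$ and $B_{2R}\subset\subset\Omega$. One should also check that the cutoff step is insensitive to the set $\{\eps u=0\}$. This is guaranteed by \eqref{dec1}: the pointwise inequalities hold everywhere in $\Omega$ and $F$ is $C^1$, as in the proof of Lemma \ref{lem:pw_identity_lowdim}.
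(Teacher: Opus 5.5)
Your proposal is correct and follows essentially the paper's route: multiply the pointwise inequality by $\eta^2$, integrate the divergence term by parts, bound $|F|\lesssim a(|\eps u|)|\nabla\eps u|\,|\mathcal A(\eps u)|$ and absorb it to get a Caccioppoli inequality, then pass from $L^2$ to $L^1$ via an iteration on concentric balls $R\le r<s\le 2R$ and finish with Poincar\'e on $B_R$. The only difference is that you get the factor $\tfrac12\phi(s)$ directly from a Nash/Gagliardo--Nirenberg interpolation on $B_s$ with a small parameter. The paper instead uses the annular Poincar\'e-type inequality of \cite[Lemma 3.4]{CianchiMazya19} followed by hole-filling before applying the same iteration lemma.
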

    
    \begin{proof} Throughout this proof, the explicit constants and the constants in the relations $\lq\lq \lesssim"$ and $\lq\lq \eqsim"$ depend only on $n$, $i_a$,  and $s_a$.  
         Let $R\leq \sigma <\tau \leq 2R$ and let $\eta \in C_c^\infty(\mathbb R^n)$ be a cut-off function such that 
        \begin{align*}
          {\rm supp}\,\eta \subset B_\tau, \quad  0\leq \eta \leq 1, \quad \eta = 1 \,\, \text{in $B_\sigma$}\quad \text{and}\quad \abs{\nabla \eta}\leq \frac 2{\tau -\sigma}.
        \end{align*}
        Our assumptions on $a$ and $u$ ensure that the hypotheses of  Lemma \ref{lem:pw_ineq_lowdim} or Lemma \ref{lem:pw_ineq_alldim}, or Lemma \ref{lem:pw_ineq_2d} are fulfilled, depending on whether (ii) or (iii), or (i) is in force. 
 Thus, either of the inequalities \eqref{pwineq1} and \eqref{pwineq2} applies. Multiplying the appropriate inequality by $\eta^2$, integrating over $B_{2R}$, and making use of \eqref{nov50} tell us that 
        \begin{align*}
            \int_{B_{2R}}\eta^2\abs{\nabla (\mathcal A(\eps u))}^2 + \eta^2 \divergence X\,dx \lesssim   \int_{B_{2R}} \eta^2 \abs{\divergence (\mathcal A(\eps u))}^2 \,dx ,
        \end{align*}
        where either
        \begin{align}
            \label{Xcase1}
            X=a(\abs{\eps u})^2 ((\Delta u)^T\eps u  - \tfrac 12 \nabla (\abs{\eps u}^2))
        \end{align}
        or
        \begin{align}
            \label{Xcase2}
            X=a(\abs{\eps u})^2 \Big((\Delta u)^T\eps u  - \tfrac 12 \nabla (\abs{\eps u}^2)+\tfrac 12 \divergence u((\Delta u)^T - \nabla \divergence u\Big),
        \end{align}
        depending on whether  
\eqref{pwineq1} or \eqref{pwineq2} is exploited.
        Since $\eta^2 \divergence X=\divergence (\eta^2 X)-\nabla \eta^2 \cdot X$,  the divergence theorem enables one to deduce that
\begin{align}\label{eq:local1}
                \int_{B_{2R}} \eta^2\abs{\nabla (\mathcal A(\eps u))}^2 \,dx \lesssim \int_{B_{2R}}\eta^2 \abs{\divergence (\mathcal A(\eps u))}^2 + \nabla(\eta^2) \cdot X\,dx.
        \end{align}
        If Equation \eqref{Xcase1} holds, then by Young's inequality, \eqref{nov1}, and \eqref{nov50} we have that
        \begin{align*}
                \nabla (\eta^2)\cdot X&\lesssim  a(\abs{\eps u})^2 \eta \abs{\nabla \eta} \big(\abs{\Delta u}\abs{\eps u}+ \abs{\eps u}\abs{\nabla \abs{\eps u}} \big)
                \\
                &\lesssim \delta\eta^2 a(\abs{\eps u})^2 \abs{\nabla \eps u}^2 +\delta^{-1}\abs{\nabla \eta}^2a(\abs{\eps u})^2\abs{\eps u}^2 
                \\
                &\eqsim \delta\eta^2 \abs{\nabla (a(\abs{\eps u}) \eps u)}^2 +\delta^{-1}\abs{\nabla \eta}^2a(\abs{\eps u})^2\abs{\eps u}^2
        \end{align*}
        for $\delta >0$.
        Similarly, if  \eqref{Xcase2} holds, then
        \begin{align*}
            \nabla (\eta^2)\cdot X&\lesssim  a(\abs{\eps u})^2 \eta \abs{\nabla \eta} \big(\abs{\Delta u}\abs{\eps u}+ \abs{\eps u}\abs{\nabla \abs{\eps u}}+\abs{\divergence u}\abs{\Delta u}+\abs{\divergence u} \abs{\nabla \divergence u} \big)
            \\
            &\lesssim \delta\eta^2 a(\abs{\eps u})^2 \abs{\nabla \eps u}^2 +\delta^{-1}\abs{\nabla \eta}^2a(\abs{\eps u})^2\abs{\eps u}^2 
            \\
            &\eqsim \delta\eta^2 \abs{\nabla (a(\abs{\eps u}) \eps u)}^2 +\delta^{-1}\abs{\nabla \eta}^2a(\abs{\eps u})^2\abs{\eps u}^2.
        \end{align*}
Note that the second inequality in the chain above relies upon the fact that $$\abs{\divergence u}=\abs{\trace \eps u}\lesssim \abs{\eps u}.$$
Combining either of the above estimates for the expression $\nabla (\eta^2)\cdot X$  with \eqref{eq:local1}, and choosing $\delta$ small enough enable us to infer that
\begin{align}\label{eq:local2}
            \int_{B_\sigma} \abs{\nabla (\mathcal A(\eps u))}^2 \,dx \lesssim \int_{B_{2R}} \abs{\divergence (\mathcal A(\eps u))}^2 +(\tau -\sigma)^{-2}\int_{B_{\tau}\setminus B_\sigma} |\mathcal A(\eps u)|^2\,dx.
        \end{align}
        A \Poincare type inequality in the form of \cite[Lemma 3.4]{CianchiMazya19}  tells us that
    \begin{multline}\label{nov2}
                \int_{B_\tau \setminus B_\sigma} |\mathcal A(\eps u)|^2\,dx 
                 \lesssim (\tau -\sigma)^2\int_{B_\tau \setminus B_\sigma} \abs{\mathcal A(\eps u)}^2\,dx+ \frac{R}{(\tau - \sigma)^{n+1}} \Bigg(\int_{B_\tau \setminus B_\sigma} |\mathcal A(\eps u)|\,dx\Bigg)^2.  
        \end{multline}
       Coupling \eqref{eq:local2}  with \eqref{nov2} yields:
        \begin{align*}
            \int_{B_\sigma} \abs{\nabla (\mathcal A(\eps u))}^2\,dx \lesssim &\int_{B_{2R}} \abs{\divergence (\mathcal A(\eps u))}^2 +\int_{B_\tau \setminus B_\sigma}  \abs{\nabla (\mathcal A(\eps u))}^2\,dx
            +\frac{R}{(\tau-\sigma)^{n+3}}\Bigg(\int_{B_\tau \setminus B_\sigma} |\mathcal A(\eps u)|\,dx\Bigg)^2.
        \end{align*}
        An application of the 
        \lq\lq hole-filling" argument to the latter inequality ensures that 
        \begin{align*}
            \int_{B_\sigma}\!\abs{\nabla (\mathcal A(\eps u))}^2\,dx \leq &\,\frac{c}{c+1}\int_{B_\tau} \abs{\nabla (\mathcal A(\eps u))}^2\,dx+c'\!\!\int_{B_{2R}} \!\abs{\divergence (\mathcal A(\eps u))}^2
            % \\
            % &
            +
            \frac{c'R}{(\tau\!-\!\sigma)^{n+3}}\Bigg(\int_{B_\tau \setminus B_\sigma} \!\!\!\abs{\mathcal A(\eps u)}\,dx\Bigg)^2
        \end{align*}
        for suitable positive constants $c$ and $c'$.
        Hence, via a  standard 
     iteration lemma (see e.g. \cite[Section 5, Lemma 3.1]{GiaquintaMultipleIntegrals}), we obtain that
\begin{align}\label{eq:local4}
            \int_{B_R} {\abs{\nabla (\mathcal A(\eps u))}^2} \,dx \lesssim  \int_{B_{2R}} \abs{\divergence (\mathcal A(\eps u))}^2 +\frac{1}{R^{n+2}}\Bigg(\int_{B_{2R}} \abs{\mathcal A(\eps u)}\,dx\Bigg)^2.
            \end{align}
        By the mean value \Poincare 's inequality on balls and the triangle inequality, 
\begin{align}\label{eq:local5}
            \int_{B_R} \abs{\mathcal A(\eps u)}^2\,dx \lesssim  R^2\int_{B_{R}} \abs{\nabla (\mathcal A(\eps u))}^2 +\, \frac 1{R^n}\Big(\int_{B_{R}} \abs{\mathcal A(\eps u)}\,dx\Big)^2.
        \end{align}
       The inequality \eqref{eq:loca_lem} follows from \eqref{eq:local4} and \eqref{eq:local5}. 
    \end{proof}
    
    \begin{lemma}
        \label{lem:local_estimate_quadratic}
       Assume that the function $a:(0,\infty) \to (0,\infty)$ satisfies the same hypotheses as in Theorem \ref{thm:quadratic}. Let $\mathcal A$ be the function defined by \eqref{mathcalA}. If $u\in W^{2,2}_{\rm loc}(\Omega)$, then
     \begin{multline}\label{eq:loca_quadratic}
        \fint_{B_R}  \abs{\mathcal A(\eps u)}^2 + R^2\abs{\nabla (\mathcal A(\eps u))}^2\,dx      
                \leq c  R^2\fint_{B_{2R}} \abs{\divergence (\mathcal A(\eps u))}^2\,dx +c\Bigg(\fint_{B_{2R}} \abs{\mathcal A(\eps u)}\,dx\Bigg)^2
        \end{multline}
     for some constant $c=c(n, i_a, s_a)$ and for every ball $B_R$ such that $B_{2R}\subset \subset \Omega$.
    \end{lemma}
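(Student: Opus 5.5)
The plan is to deduce \eqref{eq:loca_quadratic} from Lemma \ref{lem:local_estimate_v} by mollification. Fix a ball $B_R$ with $B_{2R}\subset\subset\Omega$, and choose $\rho>0$ small enough that $B_{2R+\rho}\subset\subset\Omega$. Let $u_k = u*\phi_{1/k}$ be standard mollifications. For $k$ large these are $C^\infty$ in a neighbourhood $U$ of $\overline{B_{2R}}$, and
\begin{align*}
u_k\to u \ \text{ in } W^{2,2}(U).
\end{align*}
Lemma \ref{lem:local_estimate_v} only uses that $u$ is $C^3$ near $\overline{B_{2R}}$, so it applies to each $u_k$ on the pair $B_R, B_{2R}$. (Alternatively, one applies it with $\Omega$ replaced by $U$.) This gives \eqref{eq:loca_lem} for $u_k$ with a constant $c=c(n,i_a,s_a)$ independent of $k$. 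It remains to pass to the limit in each term.

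\emph{Zero-order terms.} Up to a subsequence, $\eps u_k\to\eps u$ in $L^2(B_{2R})$ and a.e. The map $\mathcal A$ is globally Lipschitz, since \eqref{assumption_a_nice2} and $s_a<\infty$ give $|\nabla_\xi\mathcal A(\xi)|\le (1+\max\{|i_a|,s_a\})\lambda^{-1}$; the Lipschitz constant depends on $\lambda$, but only the convergence is needed. Hence $\mathcal A(\eps u_k)\to\mathcal A(\eps u)$ in $L^2(B_{2R})$, and so also in $L^1$. Therefore the terms $\fint|\mathcal A(\eps u_k)|^2$ on $B_R$ and $\big(\fint|\mathcal A(\eps u_k)|\big)^2$ on $B_{2R}$ converge to the corresponding terms for $u$.

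\emph{Gradient terms.} This is the main step. By \eqref{dec1} and \eqref{dec3}, $\mathcal A\in C^1(\RRnn)$ with bounded derivative. By the chain rule (as in Lemma \ref{lemma:equiv}(ii)),
\begin{align*}
\nabla(\mathcal A(\eps u_k))=D\mathcal A(\eps u_k)\,\nabla\eps u_k ,
\end{align*}
and the same formula holds a.e. for $u$. Now $\nabla\eps u_k\to\nabla\eps u$ in $L^2(B_{2R})$. Moreover $D\mathcal A(\eps u_k)\to D\mathcal A(\eps u)$ a.e., by continuity of $D\mathcal A$ and a.e. convergence, and these matrices are uniformly bounded. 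A standard argument (split the product, use dominated convergence on $|D\mathcal A(\eps u_k)-D\mathcal A(\eps u)|\,|\nabla\eps u|$) then yields
\begin{align*}
\nabla(\mathcal A(\eps u_k))\to\nabla(\mathcal A(\eps u)) \quad\text{in } L^2(B_{2R}).
\end{align*}
Taking traces in the derivatives, this also gives $\divergence(\mathcal A(\eps u_k))\to\divergence(\mathcal A(\eps u))$ in $L^2(B_{2R})$.

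\emph{Conclusion.} Passing to the limit in \eqref{eq:loca_lem} for $u_k$ gives \eqref{eq:loca_quadratic}, with the same constant $c=c(n,i_a,s_a)$. The only delicate point is the chain rule at points where $\eps u=0$. It is handled by \eqref{dec1}, which makes $\mathcal A$ a $C^1$ map with bounded derivative, so that the vector-valued chain rule of \cite{Marcus-Mizel} applies. Finally, the independence of $c$ from $\lambda$ is inherited from Lemma \ref{lem:local_estimate_v}, because $\lambda$ enters only through the qualitative convergence arguments and never through the estimate itself.
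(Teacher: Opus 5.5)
Your proposal is correct and follows essentially the same route as the paper. Both arguments approximate $u$ by smooth functions in $W^{2,2}_{\rm loc}$, apply Lemma \ref{lem:local_estimate_v} to the approximants with its $\lambda$-independent constant, and pass to the limit using $L^2$ convergence of $\mathcal A(\eps u_k)$ and $\nabla(\mathcal A(\eps u_k))$; the paper obtains the gradient convergence via a.e. convergence and an $L^2$ dominating function for $|\nabla^2 u_k|$ along a subsequence, whereas you split $D\mathcal A(\eps u_k)\nabla\eps u_k - D\mathcal A(\eps u)\nabla\eps u$ and use that $D\mathcal A$ is bounded and continuous, which is an equivalent minor variant.
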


    \begin{proof}
        Let  $\{u_k\}\subset C^\infty(\Omega)$  be a sequence of functions such that
        \begin{alignat*}{2}
            u_k&\to u  &&\qquad\text{in $ W^{2,2}_{\rm loc}(\Omega)$},
            \\
            \nabla u_k&\to \nabla u &&\qquad\text{a.e. in $\Omega$,}
            \\
            \nabla ^2 u_k &\to \nabla ^2 u &&\qquad\text{a.e. in $\Omega$.} 
        \end{alignat*}
        Hence, in particular, $\mathcal A( \eps u_k)\to \mathcal A(\eps u)$ and $\nabla (\mathcal A(\eps u_k))\to \nabla (\mathcal A(\eps u))$   a.e. in $\Omega$. Our assumptions on the function $a$ also ensure that
        \begin{align*}
            \abs{\nabla (\mathcal A(\eps u_k))} &= \bigg|a(\abs{\eps u_k})\nabla \eps u_k + a'(\abs{\eps u_k})\frac{\eps u_k}{\abs{\eps u_k}}\nabla \eps u_k \eps u_k\bigg|
            \\
            &\leq (a(\abs{\eps u_k})+a'(\abs{\eps u_k})\abs{\eps u_k})\abs{\nabla \eps u_k}
            \\ &\leq (\lambda+s_a\lambda)\abs{\nabla \eps u_k }
           \lesssim \abs{\nabla \eps u_k}\lesssim |\nabla ^2 u_k| \qquad \text{in $\Omega$.}
        \end{align*}
        Moreover, for every set $\Omega' \subset\subset \Omega$,   there exists a subsequence, still indexed by $k$, and a function $g\in L^2(\Omega')$ such that $|\nabla ^2 u_k|\leq g$ a.e. in $\Omega'$ for  $k \in \mathbb N$.
        Thus, by the dominated convergence theorem, one has  that $\nabla (\mathcal A(\eps u_k)) \to \nabla (\mathcal A(\eps u))$ in $L^2_\loc(\Omega)$. An analogous  argument also tells us that $\mathcal A(\eps u_k)\to \mathcal A(\eps u)$ in $L^2_\loc(\Omega)$.
    \\
        Since $u_k \in C^\infty(\Omega)$,  from Lemma \ref{lem:local_estimate_v} we deduce that
        \begin{multline*}
            \fint_{B_R}  \abs{\mathcal A(\eps u_k)}^2 +  R^2\abs{\nabla (\mathcal A(\eps u_k))}^2\,dx
    \leq c R^2\fint_{B_{2R}} \abs{\divergence (\mathcal A(\eps u_k))}^2\,dx + c \Bigg(\fint_{B_{2R}} \abs{\mathcal A(\eps u_k)}\,dx\Bigg)^2    
        \end{multline*}
        for some constant $c$ depending  on $n$, $i_a$,  and $s_a$.  
        Thanks to the convergences discussed above, passing to the limit in the latter inequality yields \eqref{eq:loca_quadratic}.
    \end{proof}
    We are now ready to prove Theorem \ref{thm:quadratic}.

    \begin{proof}[Proof of Theorem \ref{thm:quadratic}]
    Throughout this proof, the  constant in the relation $\lq\lq \lesssim "$  depends only on $n$, $i_a$,  and $s_a$.  
        Let  $\{f_k\} \subset C^\infty(\Omega)$ be a sequence of functions such that $f_k\to f$ in $L^2_{\rm loc}(\Omega)$ and let $u_k \in W^{1,2}(B_{2R})$ be the solution to the Dirichlet problem
        \begin{align}
    		\label{approx_qudratic}
    		\begin{alignedat}{2}
    			-\divergence (\mathcal A(\eps u_k)) &= f_k &\qquad&\text{in $B_{2R}$}
    			\\
    			u_{k}&= u & \qquad&\text{on $\partial B_{2R}$.}
                \end{alignedat}
    	\end{align}
       Our assumptions on $a$ ensure the existence and uniqueness of such a solution, via the direct method of the Calculus of Variations.
        One has that  $u_k\in W^{2,2}_{\rm loc}(B_{2R})$ -- see, e.g., \cite{Seregin1992} or \cite{GiPa}. Thus, an application of Lemma \ref{lem:local_estimate_quadratic} to $u_k$, with $R$ replaced by $r\in (0,R)$, and the monotone convergence theorem for the Lebesgue integral imply that
        \begin{align}\label{jan140}
      \fint_{B_R}  \abs{\mathcal A(\eps u_k)}^2 + R^2\abs{\nabla (\mathcal A(\eps u_k))}^2\,dx
                    & \lesssim  R^2\fint_{B_{2R}} \abs{f_k}^2\,dx +\Bigg(\fint_{B_{2R}} \abs{\mathcal A(\eps u_k)}\,dx\Bigg)^2 
                \\ \nonumber
                & \lesssim   R^2\fint_{B_{2R}} \abs{f}^2\,dx +\Bigg(\lambda\fint_{B_{2R}} \abs{\nabla u_k}\,dx\Bigg)^2
        \end{align}
        for $k \in \mathbb N$.
\\
        Using the test function $u_k-u \in W_0^{1,2}(B_{2R})$ in the weak formulation of the problem \eqref{approx_qudratic} and   in the definition of local weak solution to \eqref{system_a}, and subtracting the resultant equations yield
        \begin{align}
            \label{jan141b}
        \int_{B_{2R}}(\mathcal A(\eps u_k)-\mathcal A(\eps u)) \cdot (\eps u_k -\eps u)\, dx = \int_{B_{2R}}(f_k-f)(u_k-u)\, dx.
        \end{align}
        Hence, by the Korn, H\"older and Poincar\'e inequalities,
        \begin{align}
            \label{jan141}
                \int_{B_{2R}} |\nabla u_k -\nabla u|^2\, dx & \leq c
        \int_{B_{2R}} |\eps u_k -\eps u|^2\, dx 
        \\ \nonumber
        & \leq c' \bigg(\int_{B_{2R}}|f_k-f|^2\, dx\bigg)^{\frac 12} \bigg(\int_{B_{2R}}|u_k-u|^2\, dx\bigg)^{\frac 12} 
        \\ \nonumber
        & \leq c'' \bigg(\int_{B_{2R}}|f_k-f|^2\, dx\bigg)^{\frac 12} \bigg( \int_{B_{2R}} |\nabla u_k -\nabla u|^2\, dx\bigg)^{\frac 12}
        \end{align}
        for some constant $c$ depending on $n$ and $R$,  and $c'$ and $c''$  also depending on  $\lambda$.
        Dividing through the inequality \eqref{jan141} by 
        $ \big( \int_{B_{2R}} |\nabla u_k -\nabla u|^2\, dx\big)^{\frac 12}$ and passing to the limit as $k\to \infty$ tell us that $\nabla u_k \to \nabla u$ in $L^2(B_{2R})$. Since $u_k-u\in W^{1,2}_0(B_{2R})$, by the \Poincare~inequality we have that $u_k \to u$ in $W^{1,2}(B_{2R})$, and, possibly taking a subsequence, still indexed by $k$, 
         $$\mathcal A(\eps u_k) \to \mathcal A(\eps u)\qquad\text{a.e. in $B_R$.}$$
      Hence, from Equation \eqref{approx_qudratic} we deduce that the sequence $\{\mathcal A(\eps u_k)\}$ is bounded in $W^{1,2}(B_R)$. Thus, there exist a subsequence, still indexed by $k$, and a function $Z\in W^{1,2}(B_R)$ such that
        \begin{alignat*}{2}
           \mathcal A(\eps u_k) &\to Z &&\qquad\text{a.e. in $B_R$,}
            \\
            \mathcal A(\eps u_k) &\to Z &&\qquad\text{in $L^2(B_R)$,}  
            \\
            \mathcal A(\eps u_k) &\weakto Z &&\qquad\text{weakly in $W^{1,2}(B_R)$.}
        \end{alignat*}
       Therefore, $A(\eps u)=Z$, and passing to the limit as $k\to \infty$ in \eqref{jan140} yields \eqref{jan147}. Equation \eqref{2026-26} hence follows, thanks to the arbitrariness of the ball $B_R$.
    \end{proof}

\section{Proof of Theorem \ref{thm:local}}
\label{S:proof-main}

With the material from the previous sections at our disposal, we are now in a position to accomplish the proof of our main result.

\begin{proof}[Proof of Theorem \ref{thm:local}] For ease of presentation, we split the proof into steps. Unless otherwise specified, throughout this proof, the explicit constants and the constants in the relations $\lq\lq \lesssim"$ and $\lq\lq \eqsim"$ depend only on $n$, and $p$.
\smallskip
\par\noindent
\textbf{Step 1:}\emph{ approximating problems.}
Assume  that $f\in C^\infty(\Omega)$ and that $u$ is a local weak solution to the system \eqref{eq:system}. Let $\{u_m\}$ be a family of functions in $C^\infty (\Omega)$ such that \begin{align}\label{dec36}
	    \lim_{m\to \infty} u_m \to u \qquad \text{in $W^{1,p}_{\rm loc}(\Omega)$.}
	\end{align} 
  Given $\varepsilon \in (0,1)$, let 
  $a_{p,\varepsilon}$, $b_{p,\varepsilon}$, $B_{p,\varepsilon}$ and 
    $\mathcal A_{p,\varepsilon}$ be the functions defined in Section \ref{sec:young}.
    Recall from \eqref{indici} that if $p\leq 2$, then 
  \begin{align}
        \label{jan160}
0\geq s_{a_{p,\varepsilon}}  \geq 
        i_{a_{p,\varepsilon}} > \begin{cases}
  - \frac{5}{2(4+\sqrt{6})} &\quad \text{if $n=2$}
   \\ - \frac{1}{\sqrt{n+1}+1}  &\quad \text{if $n\geq 3$}
        \end{cases}
    \end{align} 
    whereas if $p\geq 2$, then \begin{align}\label{jan161}
        0\leq i_{a_{p,\varepsilon}}\leq s_{a_{p,\varepsilon}} < \begin{cases}
     \infty &\quad \text{if $n\leq 7$}
   \\   \frac{1}{\sqrt{n+1}-1} &\quad \text{if $n\geq 8$.}  
   \end{cases}
    \end{align}
Let $u_{\varepsilon,m}\in W^{1,2}_{u_m}(B_{2R})$ be the weak solution to the Dirichlet problem
	\begin{align}
		\label{approx_system}
		\begin{alignedat}{2}
			-\divergence (\mathcal A_{p,\varepsilon}(\eps u_{\varepsilon,m})) &= f &\qquad &\text{in $B_{2R}$}
			\\
		u_{\varepsilon,m} &= u_m &\qquad&\text{on $\partial B_{2R}$.}     
    	\end{alignedat}
	\end{align}
	Such a solution exists since, for fixed $p$ and $\varepsilon$,  we have that $a_{p,\varepsilon}(t) \eqsim 1$ for $t \geq 0$.
   \medskip
\par\noindent \textbf{Step 2:}\emph{ limit as $\varepsilon \to 0$.} Let $f$ and $u$ be as in Step 1.
In this step we  fix $m \in \mathbb N$, and   we show that the sequence $\{u_{\varepsilon , m}\}$ converges, as $\varepsilon \to 0$, to the 
weak solution $w_m \in W^{1, \min\{p,2\}}_{u_m}(B_{2R})$ to the Dirichlet problem
\begin{align}
		\label{limit_system}
		 \begin{alignedat}{2}
			-\divergence (\mathcal A_p(\eps w_m)) &= f &\qquad&\text{in $B_{2R}$}
			\\
			w_m&=u_m &\qquad&\text{on $\partial B_{2R}$.}    \end{alignedat}
\end{align}
We also prove that $\mathcal{A}_p (\eps w_m)\in W^{1,2}(B_R)$, and 
\begin{align}\label{dec32}
		\fint_{B_{R}} \abs{\mathcal A_p (\eps w_m)}^2+R^2\abs{\nabla \mathcal A_p (\eps w_m)}^2\,dx \lesssim \fint_{B_{2R}} R^2\abs{f}^2\,dx + \Bigg(\fint_{B_{2R}}\abs{\mathcal A_p(\eps w_m)}\,dx\Bigg)^2.
	\end{align}
To this purpose, we proceed as follows. Testing the system \eqref{approx_system}
with $u_{\varepsilon,m} - u_m \in W_0^{1,2}(B_{2R})$ yields, via Young's inequality and the $\Delta_2$-property for $B_{p,\varepsilon}$, the Poincar\'e inequality in Orlicz spaces \eqref{poincare}, and the Korn inequality in Orlicz spaces  \eqref{korn-0}:
\begin{align}
    \label{step1A}
        \int_{B_{2R}}\big(\mathcal A_{p,\varepsilon}(\eps u_{\varepsilon,m})\cdot \eps u_{\varepsilon,m} & - \mathcal A_{p,\varepsilon}(\eps u_{\varepsilon,m})\cdot \eps u_m\big) \, dx  = \int_{B_{2R}} f (u_{\varepsilon,m} - u_m)dx
        \\ \nonumber
        & \leq  \int_{B_{2R}} c_\delta\widetilde{B_{p,\varepsilon}}(|f|)+ \delta  {B_{p,\varepsilon}}(|u_{\varepsilon,m} -u_m|)\, dx
        \\ \nonumber
        &\leq  \int_{B_{2R}}c_\delta \widetilde{B_{p,\varepsilon}}(|f|) +c\,\delta  {B_{p,\varepsilon}}(|\eps u_{\varepsilon,m} |) + c\,\delta  {B_{p,\varepsilon}}(|\eps u_m |)\,dx 
\end{align}
for $\delta >0$ and some constant $c_\delta$ depending  on $n$, $p$ $R$, and $\delta$. Notice that, since the $\Delta_2$-property for $B_{p,\varepsilon}$ holds with a constant independent of $\varepsilon$, the constants in the Poincar\'e and Korn inequalities are also independent of $\varepsilon$.
From \eqref{mar43}, \eqref{step1A}, Young's inequality, and \eqref{orliczbasic}
 we obtain that
\begin{align*}
    \begin{aligned}
        \int_{B_{2R}}B_{p,\varepsilon}(\abs{\eps u_{\varepsilon,m}})\, dx &\lesssim \int_{B_{2R}}\mathcal A_{p,\varepsilon}(\eps u_{\varepsilon,m})\cdot \eps u_{\varepsilon,m} \,dx
        \\
        &\leq\int_{B_{2R}}  A_{p,\varepsilon}(\eps u_{\varepsilon,m})\cdot \eps u_m + c_\delta \widetilde{B_{p,\varepsilon}}(|f|) +\delta  {B_{p,\varepsilon}}(|\eps u_{\varepsilon,m} |) + \delta  {B_{p,\varepsilon}}(|\eps u_m |)\,dx
        \\
        &\leq \int_{B_{2R}}c_\delta \widetilde{B_{p,\varepsilon}}(|f|) +\delta  {B_{p,\varepsilon}}(|\eps u_{\varepsilon,m} |) + c_\delta'  {B_{p,\varepsilon}}(|\eps u_m |)\,dx
    \end{aligned}
\end{align*}
for suitable constants $c_\delta$ and $c_\delta'$ depending on $n$, $p$, $R$, and $\delta$. Choosing 
$\delta$ small enough, we conclude from the latter chain that
\begin{align}
    \label{step1B}
    \int_{B_{2R}}B_{p,\varepsilon}(\abs{\eps u_{\varepsilon,m}})\, dx \leq c \int_{B_{2R}}\widetilde{B_{p,\varepsilon}}(|f|) + {B_{p,\varepsilon}}(|\eps u_m |)\,dx
\end{align}
for some  constant $c$  depending on $R$, $p$, and $n$. Coupling \eqref{step1B} with the Korn inequality \eqref{korn-0} ensures that 
\begin{align}
    \label{step1C}
    \begin{aligned}
        \int_{B_{2R}} \!\! B_{p,\varepsilon}(|\nabla (u_{\varepsilon,m} \!-\!  u_m)|)\,dx & \leq c 
    \!\!\int_{B_{2R}} \!\! B_{p,\varepsilon}(|\eps (u_{\varepsilon,m} \!-\!  u_m)|)\,dx  
 \leq c'\!\!\int_{B_{2R}} \!\!\widetilde{B_{p,\varepsilon}}(|f|) + {B_{p,\varepsilon}}(|\eps u_m |)\,dx
    \end{aligned}
\end{align}
for some  constants $c$ and $c'$ depending on $n$, $p$, and $R$.
Since $f \in L^\infty(B_{2R})$ and $u_m\in W^{1,\max\{2,p\}
}(B_{2R})$, Equations \eqref{step1C}, \eqref{mar45}, and \eqref{mar45'} imply that
\begin{align}
    \label{mar47}
    \int_{B_{2R}} {B_{p,\varepsilon}}(|\nabla u_{\varepsilon,m}-\nabla u_m|)dx \leq c
\end{align}
for some constant $c$ independent of $\varepsilon$. From \eqref{nov41} we infer that the space $W^{1,B_{p,\varepsilon}}_0(B_{2R})$ is continuosly embedded into $ W^{1,\min\{p,2\}}_0(B_{2R})$, with embedding constant independent of $\varepsilon$. Thus, the sequence $\{u_{\varepsilon,m}-u_m\}$ is uniformly bounded in $W^{1,\min\{p,2\}}_0(B_{2R})$ for $\varepsilon \in (0,1)$. This shows that there exists $w_m \in W^{1,\min\{p,2\}}_{u_m}(B_{2R})$ such that 
\begin{align}
    \label{convergence1}
    u_{\varepsilon,m} \weakto w_m \qquad\text{weakly in $W^{1,\min\{p,2\}}_{u_m} (B_{2R})$}
\end{align}
as $\varepsilon \to 0$.
By Theorem \ref{thm:quadratic}, for every ball $B_r(x_0)$ with $B_{2r}(x_0)\subset \subset B_{2R}$ one has that
\begin{align}
    \label{step1D}
    \int_{B_R}  \abs{ \mathcal{A}_{p,\varepsilon} (\eps u_{\varepsilon,m})}^2+ \abs{\nabla (\mathcal{A}_{p,\varepsilon} (\eps u_{\varepsilon,m}))}^2\,dx
            \leq c  \int_{B_{2R}} \abs{f}^2\,dx +c\Bigg(\int_{B_{2R}} | \mathcal{A}_{p,\varepsilon} (\eps u_{\varepsilon,m})|\,dx\Bigg)^2 
\end{align}
for some constant $c$ depending on  $n$, $p$, and $r$. Owing to Young's inequality and the property \eqref{orliczbasic}, 
\begin{align*}
    \abs{\mathcal{A}_{p,\varepsilon}(\eps u_{\varepsilon,m})} \leq c B_{p,\varepsilon} (\eps u_{\varepsilon,m}) + c B_{p,\varepsilon} (1)
\end{align*}
for some constant $c$ independent of $\varepsilon$. Hence, the bound \eqref{step1D} entails that
\begin{align}\label{2026-30}
		\norm{\mathcal A_{p,\varepsilon}(\eps u_{\varepsilon,m})}_{W^{1,2}(B_r(x_0))} \leq c
\end{align}
for some constant $c$ depending on $f$, $r$, $n$, and $p$, but independent of $\varepsilon$ and $m$. Thus, for every such ball, there exists $\overline{\mathcal A}_{r,x_0}\in W^{1,2}  (B_r(x_0))$ such that, up to subsequences,
	\begin{align*}
    \begin{alignedat}{2}
		\mathcal A_{p,\varepsilon} (\eps u_{\varepsilon,m}) &\to \overline{\mathcal A}_{r,x_0} &\quad&
        \text{a.e. in $B_r(x_0)$,}
         \\  
		\mathcal A_{p,\varepsilon} (\eps u_{\varepsilon,m}) &\to \overline{\mathcal  A}_{r,x_0} && \text{in $L^2(B_r(x_0))$,  }
        \\ 
    \mathcal A_{p,\varepsilon} (\eps u_{\varepsilon,m}) &\weakto \overline{\mathcal  A}_{r,x_0} &&\text{weakly in $W^{1,2}(B_r(x_0))$}
    \end{alignedat}
    \end{align*}
    as $\varepsilon \to 0$.
	Therefore, by a covering and diagonal argument, there exists a function $\overline {\mathcal A} \in W^{1,2}_\loc (B_{2R})$ such that
	\begin{align}
    \label{nov45}
		\begin{cases}\mathcal A_{p,\varepsilon} (\eps u_{\varepsilon,m}) \to \overline{\mathcal A}&\quad
        \text{a.e. in $B_{2R}$}
         \\  
		\mathcal A_{p,\varepsilon} (\eps u_{\varepsilon,m}) \to \overline{\mathcal  A} & \quad \text{in $L^2_\loc(B_{2R})$  }
         \\ 
         \mathcal A_{p,\varepsilon} (\eps u_{\varepsilon,m}) \weakto \overline{\mathcal  A} &\quad \text{weakly in $W^{1,2}_\loc(B_{2R})$}
         \end{cases}
    \end{align}
    as $\varepsilon \to 0$. 
      From the bound \eqref{2026-30} and Lemma \ref{lem:Atou} we infer that there exists $\alpha >0$ such that the sequence $\set{\eps u_{\varepsilon,m}}$ is uniformly bounded in the \Nikolskii~space $\mathcal{N}_1^\alpha(B_r(x_0))$ for $\varepsilon \in (0,1)$. Thanks to the compactness of the embedding of $\mathcal{N}_1^\alpha(B_r(x_0))$ into  $L^1(B_r(x_0))$ and  a covering and diagonal argument we conclude that, up to subsequences, 
   \begin{align}
		\label{convergence1A}
		\eps u_{\varepsilon,m} \to \eps w_m \qquad\text{a.e. in $B_{2R}$,}
	\end{align}
    as $\varepsilon \to 0$. 
   By \eqref{conva}, the map $(\varepsilon, \xi)\mapsto \mathcal A_{p,\varepsilon}^{-1}(\xi)$ is continuous. Thus, the convergence \eqref{nov45} implies that
	\begin{align}
		\eps u_{\varepsilon,m} = \mathcal A_{p,\varepsilon}^{-1}(\mathcal A_{p,\varepsilon}(\eps u_{\varepsilon,m})) \to \mathcal A_p^{-1}(\overline{\mathcal A})\qquad\text{a.e. in $B_{2R}$,}
	\end{align}
    as $\varepsilon \to 0$.
	Coupling this piece of information with \eqref{convergence1A} tells us that $\overline{\mathcal A}=\mathcal A_p(\eps w_m)$.
    \\
    Let $\phi \in C_c^\infty (B_{2R})$. Then,
	\begin{align}\label{nov46}
		\int_{B_{2R}} \mathcal A_{p,\varepsilon}(\eps u_\varepsilon)\cdot\eps \phi\,dx=\int_{B_{2R}}f\cdot \phi\,dx.
	\end{align}
	By \eqref{nov45}, one has $\mathcal A_{p,\varepsilon}(\eps u_{\varepsilon,m})\to \mathcal A_p (\eps w_m)$ in $L^2(\support \phi)$. Hence, passing to the limit as $\varepsilon \to 0$ in 
    \eqref{nov46} yields:
	\begin{align}
		\label{convergence0}
		\int_{B_{2R}}\mathcal A_p(\eps w_m)\cdot\eps \phi\,dx = \int_{B_{2R}}f\cdot \phi \,dx
	\end{align}
   for every $\phi \in C_c^\infty(B_{2R})$.
   	Notice that, since $B_{p,\varepsilon}(\abs{\eps u_\varepsilon})\to \abs{\eps w_m}^p$ a.e. in $B_{2R}$ as $\varepsilon \to 0$, by Fatou's Lemma and \eqref{mar47} we have
	\begin{align*}
		\int_{B_{2R}}\abs{\eps w_m}^p \,dx &= \int_{B_{2R}} \liminf_{\varepsilon\to 0} B_{p,\varepsilon}(\abs{\eps u_\varepsilon}) \,dx
		\leq \liminf_{\varepsilon\to 0}\int_{B_{2R}}B_{p,\varepsilon}(\abs{\eps u_\varepsilon})\,dx \leq c.
	\end{align*}
Hence, $\mathcal A_p(\eps w_m)\in L^{p'}(B_{2R})$. An approximation argument then ensures that \eqref{nov46} also holds for every $\phi \in W_0^{1,p}(B_{2R})$.
	This shows that $w_m$ is a weak solution to the problem \eqref{limit_system}.
\\ Next, let $0<r<R$. By Theorem \ref{thm:quadratic},  for each $\varepsilon\in (0,1)$ we have:
	\begin{align*}
		\fint_{B_r} \abs{\mathcal A_{p,\varepsilon} (\eps u_\varepsilon)}^2+r^2\abs{\nabla (\mathcal A_{p,\varepsilon} (\eps u_\varepsilon))}^2\,dx \lesssim r^2 \fint_{B_{2r}} \abs{f}^2\,dx + \Bigg(\fint_{B_{2r}}\abs{\mathcal A_{p,\varepsilon}(\eps u_\varepsilon)}\,dx\Bigg)^2, 
	\end{align*}
   up to constants which, owing to \eqref{jan160} and \eqref{jan161}, are independent of $\varepsilon$ and $r$. Since $\overline {\mathcal A}= \mathcal A_p(\eps w_m)$, from \eqref{nov45} we deduce that
	\begin{align*}
		\fint_{B_{r}} \abs{\mathcal A_p (\eps w_m)}^2+r^2\abs{\nabla (\mathcal A_p (\eps w_m))}^2\,dx \lesssim r^2 \fint_{B_{2r}} \abs{f}^2\,dx + \bigg(\fint_{B_{2r}}\abs{\mathcal A_p(\eps w_m)}\,dx\bigg)^2.
	\end{align*}
	Taking the limit as $r\to R$ in the last inequality yields \eqref{dec32}, via the monotone convergence theorem.
    
     \medskip
\par\noindent
     \textbf{Step 3:}\emph{ limit as $m\to \infty$.} Let $f$ and $u$ be as in Steps 1 and 2. This step is devoted to showing that
    \begin{align}
        \label{dec33}
        \lim_{m \to \infty} w_m \to u \qquad \text{in $W^{1,p}(B_{2R})$.}
    \end{align}
    We will also prove that $\mathcal A_p(\eps u) \in W^{1,2}(B_R)$ and
the inequality  \eqref{main1} holds.
\\ To this end, one can test the problem \eqref{limit_system} with $w_m - u_m$ and proceed along the same lines as in the proof of the inequality \eqref{mar47} to obtain  that
\begin{align}
    \label{dec35'}
    \|w_m - u_m\|_{W^{1,p}_0(B_{2R})}\leq c ,
\end{align}
for some constant $c$. Of course, now the function  $\mathcal A_{p,\varepsilon}(\xi)$ has to be replaced with $\mathcal A_p(\xi)$, and $B_{p,\varepsilon}(t)$ with $t^p$.
 From  \eqref{dec36} and \eqref{dec35'} we deduce that
 \begin{align}
    \label{dec35}
\|w_m\|_{W^{1,p}(B_{2R})}\leq c
\end{align}
for some constant $c$. 
\\ Next, test the system in \eqref{limit_system} and the system \eqref{eq:system} with the function $w_m - u_m \in W^{1,p}_0(B_{2R})$, and subtract the resultant equations. This yields:
\begin{align}
    \label{dec37}
    \begin{aligned}
  0 &=  \int_{B_{2R}}(\mathcal A_p(\eps w_m) - \mathcal A_p(\eps u))\cdot (\eps w_m- \eps u_m) dx
  \\
  & = \int_{B_{2R}}(\mathcal A_p(\eps w_m) - \mathcal A_p(\eps u))\cdot (\eps w_m- \eps u) dx
   +  \int_{B_{2R}}(\mathcal A_p(\eps w_m) - \mathcal A_p(\eps u))\cdot (\eps u- \eps u_m) dx.
   \end{aligned}
\end{align}
Thanks to Equations \eqref{dec36} and \eqref{dec35}, and H\"older's inequality
\begin{align}
    \label{dec38}
    \lim_{n \to \infty}\int_{B_{2R}}(\mathcal A_p(\eps w_m) - \mathcal A_p(\eps u))\cdot (\eps u- \eps u_m) dx=0.
\end{align}
On the other hand, there exists a positive constant $c$ such that
\begin{align}
    \label{dec39}
    (A(\eps w_m) - A(\eps u))\cdot (\eps w_m- \eps u) \geq c (|\eps w_m|+ |\eps u|)^{p-2}|\eps w_m - \eps u|^2.
\end{align}
Thus, if $p\geq 2$, then
\begin{align}
    \label{dec40}
    \int_{B_{2R}}|\eps w_m-\eps u|^p dx & \leq \int_{B_{2R}} (|\eps w_m|+ |\eps u|)^{p-2}|\eps w_m - \eps u|^2dx 
    \\ \nonumber & \leq \frac 1c\int_{B_{2R}}(\mathcal A_p(\eps w_m) - \mathcal A_p(\eps u))\cdot (\eps w_m- \eps u) dx.
\end{align}
If $p<2$, then, by H\"older's inequality,
\begin{align}
    \label{dec41}
    \int_{B_{2R}}|\eps w_m-\eps u|^p dx & \leq \bigg(\int_{B_{2R}} (|\eps w_m|+ |\eps u|)^{p-2}|\eps w_m - \eps u|^2dx \bigg)^{\frac{p}{2}}
\\ \nonumber & \qquad \times \bigg(\int_{B_{2R}} (|\eps w_m|+ |\eps u|)^{p}dx \bigg)^{\frac{2-p}{2}}
    \\ \nonumber & \leq \bigg(\frac 1c\int_{B_{2R}}(\mathcal A_p(\eps w_m) - \mathcal A_p(\eps u))\cdot (\eps w_m- \eps u) dx\bigg)^{\frac{p}{2}}
    \\ \nonumber & \qquad \times \bigg(\int_{B_{2R}} (|\eps w_m|+ |\eps u|)^{p}dx \bigg)^{\frac{2-p}{2}}.
\end{align}
From \eqref{dec37}--\eqref{dec41} one infers that 
\begin{align}
    \label{dec42}
    \lim_{m \to \infty}\|\eps w_m - \eps u\|_{L^p(B_{2R})}=0.
\end{align}
Notice that, for $p<2$, the bound \eqref{dec35} plays a role. By Korn's inequality, 
\begin{align}
    \label{dec43}
    \|\nabla w_m - \nabla u\|_{L^p(B_{2R})} & \leq  \|\nabla w_m - \nabla u_  m\|_{L^p(B_{2R})} +  \|\nabla u_m - \nabla u\|_{L^p(B_{2R})}
    \\ \nonumber & 
    \lesssim  \|\eps w_m - \eps u_m\|_{L^p(B_{2R})} +  \|\nabla u_m - \nabla u\|_{L^p(B_{2R})}  \\ \nonumber & 
    \leq \|\eps w_m - \eps u\|_{L^p(B_{2R})} 
    + \|\eps u - \eps u_m\|_{L^p(B_{2R})} +  \|\nabla u_m - \nabla u\|_{L^p(B_{2R})} 
\end{align}
and, by the Poincar\'e inequality,
\begin{align}
    \label{dec44}
    \|w_m -  u\|_{L^p(B_{2R})} & \leq  \| w_m -  u_m\|_{L^p(B_{2R})} +  \| u_m - u\|_{L^p(B_{2R})}
    \\ \nonumber & 
    \lesssim  \|\nabla w_m - \nabla u_  m\|_{L^p(B_{2R})} +  \|u_m -  u\|_{L^p(B_{2R})}
     \\ \nonumber & 
    \leq  \|\nabla w_m - \nabla u\|_{L^p(B_{2R})} 
    +\|\nabla u - \nabla u_m\|_{L^p(B_{2R})}
    +  \|u_m -  u\|_{L^p(B_{2R})}
    .
\end{align}
Combining \eqref{dec42}--\eqref{dec44} with \eqref{dec35} implies \eqref{dec33}.
\\ The inequalities \eqref{dec32} and \eqref{dec35} enable us to deduce that
\begin{align}
    \label{dec45}
    \|\mathcal A_p(\eps w_m)\|_{W^{1,2}(B_R)}\leq c
\end{align}
for some constant $c$. Thereby, there exists a subsequence, still indexed by $m$, and a function $U\in W^{1,2}(B_R)$ such that    
\begin{align}
    \label{dec46}
    \eps w_{m} \to \eps u \qquad \text{a.e. in $B_{2R}$,}
\end{align}
and 
\begin{align}
    \label{dec47}
\mathcal A_p(\eps w_m) \to U \,\, \text{in $L^2(B_R)$} \quad  \text{and} \quad   \mathcal A_p(\eps w_m) \weakto U \,\, \text{weakly in $W^{1,2}(B_R)$.}
\end{align}
The limit \eqref{dec46} and the first limit in \eqref{dec47} show $U= \mathcal A_p(\eps u)$. Hence $\mathcal A_p(\eps u) \in W^{1,2}(B_R)$, and 
\begin{align}
\mathcal A_p(\eps w_m) \weakto \mathcal A_p(\eps u) \quad \text{in $W^{1,2}(B_R)$.}
\end{align}
Altogether, the inequality \eqref{main1}
follows by passing to the limit as $m \to \infty$ in \eqref{dec32}.

\medskip
\par\noindent
 \textbf{Step 4:}\emph{ Conclusion.} By Steps 1--3, our proof is  complete in the case when $f\in C^\infty(\Omega)$
 and $u$ is a local weak solution.  These assumptions are removed in the present step.
 \\
 Let $f\in L^2_{\rm loc}(\Omega)$ and let $u$ be an approximable solution to the system $\eqref{eq:system}$. Fix any ball $B_{2R}\subset \subset  \Omega$, and let $\Omega'$ be an open set such that $B_{2R}\subset \subset \Omega' \subset \subset \Omega$.
 Let $\{f_k\}$ be a sequence of functions in  $C^\infty (\Omega)$ 
 and 
 $\{u_k\}$ a sequence of weak solutions to the systems \eqref{localeqk}
as in the definition of local approximable solution to \eqref{eq:system}. An application of the result established in Step 3, with $f$ and $u$ replaced with $f_k$ and $u_k$, ensures that $\mathcal A_p(\eps u_k) \in W^{1,2}_{\rm loc}(\Omega')$ and 
\begin{align}\label{main1k}
 R^{-1}\|\mathcal A_p(\eps u_k)\|_{L^2(B_R)}  + \,\|\nabla (\mathcal A_p(\eps u_k))\|_{L^2(B_R)}
   & \leq c \big(\,\|{f_k}\|_{L^2(B_{2R})} + R^{-\frac n2-1}\|\mathcal A_p(\eps u_k)\|_{L^{1}(B_{2R})}\big)
   \\ \nonumber & = c \big(\,\|{f_k}\|_{L^2(B_{2R})} + R^{-\frac n2-1}\|\eps u_k\|_{L^{p-1}(B_{2R})}\big)
\end{align} 
fro some constant $c$ and for every ball $B_R$ such that $B_{2R}\subset \subset  \Omega$. Thanks to Equation \eqref{approxaloc}, the sequence $\{\mathcal A_p(\eps u_k)\}$ is bounded in $W^{1,2}(B_R)$. Thus, there exists 
  a subsequence, still indexed by $k$,  and a function $Z\in W^{1,2}(B_R)$ such that  
\begin{align}
    \label{dec48}
    \eps u_k \to \eps u \qquad \text{a.e. in $B_{2R}$,}
\end{align}
and 
\begin{align}
    \label{dec49}
\mathcal A_p(\eps u_k) \to Z \,\, \text{in $L^2(B_R)$} \quad  \text{and} \quad   \mathcal A_p(\eps u_k) \weakto Z \,\, \text{weakly in $W^{1,2}(B_R)$.}
\end{align}
By the limit \eqref{dec48} and the first limit in \eqref{dec49}, one has  that $Z= \mathcal A_p(\eps u)$. Consequently,  $\mathcal A_p(\eps u) \in W^{1,2}(B_R)$, and 
\begin{align}
\mathcal A_p(\eps u_k) \weakto \mathcal A_p(\eps u) \,\, \text{weakly in $W^{1,2}(B_R)$.}
\end{align}
Taking the limit in \eqref{main1k} as $k\to \infty$ thus yields the inequality
 \eqref{main1}.
\end{proof}

\begin{remark}
    \label{rem:orlicz2}  {\rm
    The result of our main Theorem~\ref{thm:local} admits extensions which include  a broader class of models for the law $\mathcal{A}(\eps u)$, with $\mathcal A$ defined as in \eqref{mathcalA}. Most of the arguments presented above for $\mathcal A=\mathcal A_p$ can be generalized without substantial changes under the assumption that  either $s_a \le 0$ or  $i_a \ge 0$, corresponding to the sub-quadratic and the super-quadratic regime.
    In the case of the power type law $\mathcal A_p$, such an assumption is exploited in some steps of our proofs, and amounts to requiring that  $p\leq 2 $ or $p\geq 2$, respectively. Additional difficulties arise in the situation when  $i_a < 0 < s_a$. Theorem~\ref{thm:quadratic}  provides an example in this connection. Indeed, if the indices $i_a$ and $s_a$ have opposite signs,  they are requested to satisfy stronger assumptions for $3\leq n \leq 7$ than the corresponding ones for $p$ in Theorem \ref{thm:local}.
    Improving these conclusions in this generality entails  further investigations.}
\end{remark}

\par
\bigskip
\noindent

 \par\noindent {\bf Data availability statement.} Data sharing is not applicable to this article as no datasets were generated or analyzed during the current study.

\section*{Compliance with Ethical Standards}\label{conflicts}

\smallskip
\par\noindent
{\bf Funding}. This research was partly funded by:
\\  (i) IRTG 2235 of the German Research Foundation, grant number 
282638148 (L.Behn and L.Diening);
\\ (ii) GNAMPA   of the Italian INdAM - National Institute of High Mathematics (grant number not available)  (A.Cianchi);
\\ (iii) Research Project   of the Italian Ministry of Education, University and
Research (MIUR) Prin 2022 ``Partial differential equations and related geometric-functional inequalities'',
grant number 20229M52AS, cofunded by PNRR (A.Cianchi).
\\ (iv) National Key R\&D Program of China, grant number 2025YFA1018400 (F. Peng)
\\ (v) National Natural Science Foundation of China, grant number 12571209 (F. Peng).

\bigskip
\par\noindent
{\bf Conflict of Interest}. The authors declare that they have no conflict of interest.

\printbibliography
\end{document}